\documentclass[11pt,a4paper,reqno]{amsart}
\usepackage[T1]{fontenc}
\usepackage{lmodern}
\usepackage{amsmath,amssymb,mathtools}
\usepackage{array,booktabs,longtable,tabularx}
\usepackage[margin=26mm]{geometry}
\usepackage{microtype}
\usepackage{url}
\usepackage[hidelinks]{hyperref}
\hypersetup{pdftitle={Automorphisms of triality groups over local rings},
pdfauthor={Elena I. Bunina, Avi Berezovskiy, Deep H. Makadiya}}
\newtheorem{theorem}{Theorem}[section]
\newtheorem{proposition}[theorem]{Proposition}
\newtheorem{lemma}[theorem]{Lemma}

\theoremstyle{definition}
\newtheorem{definition}[theorem]{Definition}

\newtheorem{example}[theorem]{Example}
\newcommand{\Cent}{\operatorname{Cent}}

\newcommand{\Aut}{\operatorname{Aut}}
\newcommand{\Int}{\operatorname{Int}}

\newcommand{\Spec}{\operatorname{Spec}}
\newcommand{\GL}{\operatorname{GL}}
\newcommand{\SL}{\operatorname{SL}}
\newcommand{\Jac}{\operatorname{Jac}}

\newcommand{\Scover}{\mathcal S}
\newcommand{\Llift}{\widetilde L}
\newcommand{\U}{\widetilde U}
\newcommand{\id}{\operatorname{id}}
\newcommand{\commL}[2]{[#1,#2]_{\!L}}
\newcommand{\commR}[2]{[#1,#2]_{\!R}}
\theoremstyle{plain}
\newtheorem{maintheorem}{Theorem}

\numberwithin{equation}{section}
\title[Automorphisms of triality groups]{Automorphisms of triality groups over local rings}
\author{Elena I. Bunina}
\author{Avi Berezovskiy}
\author{Deep H. Makadiya}
\address{Department of Mathematics, Bar-Ilan University, Ramat Gan, Israel}
\subjclass[2020]{20G35}
\keywords{Triality groups, twisted Chevalley groups, automorphisms, local rings, trace-preserving maps}
\date{}
\begin{document}

\begin{abstract}
Let $R$ be a nonzero commutative local ring with $2,3\in R^\times$,
and let $\rho\in\Aut(R)$ satisfy $\rho^3=\id$.
We classify abstract automorphisms of the elementary and full triality
groups in the adjoint and simply connected forms of type $D_4$.
Every automorphism is uniquely determined by an adjoint group element
and a ring automorphism preserving the trace $T=\id+\rho+\rho^2$;
conversely, every such pair defines an automorphism.
A square-zero example shows that trace-preserving coefficient
automorphisms need not normalize $\langle\rho\rangle$ when $\rho$
is nontrivial on $R$ but trivial on its residue field.
We also prove a relative rigidity theorem for homomorphisms from
elementary triality groups over arbitrary coefficient rings to local
targets with $2$ and $3$ invertible, assuming that an adjoint
conjugation makes the parameter-one root images standard modulo
the maximal ideal. Conversely, every unital trace-preserving ring
homomorphism into such a local coefficient ring induces the
corresponding homomorphism of elementary groups.
\end{abstract}

\maketitle

%-------------------

\section{Introduction and main results}\label{sec:introduction}

We classify abstract automorphisms of triality groups over commutative
local rings with $2$ and $3$ invertible.
The coefficient action is governed by the trace $T=\id+\rho+\rho^2$
of the defining ring automorphism $\rho$, rather than by preservation
of $\rho$ up to inversion. When a nontrivial $\rho$ induces the
identity on the residue field, additional trace-preserving coefficient
symmetries can occur.

Over fields, the classical theorems of Steinberg and Humphreys describe
automorphisms by inner, diagonal, field, and graph factors
\cite{Steinberg1960,Steinberg,Humphreys1969}.
Over local rings, one must also prove that the reduction kernel is
characteristic and recover the action on root parameters over the ring.
The structural and normal-subgroup results of Abe and Suzuki provide
the background
\cite{Abe1969,AbeSuzuki1976,Abe1977,Suzuki1977,Suzuki1995}.
Automorphisms of untwisted groups over local and more general
commutative rings are treated in
\cite{BuninaADE2009,BuninaG2,Bunina2012,Bunina2024};
the twisted normal-structure results of Garge and Makadiya \cite{GM}
provide further input.

This paper completes the local triality case in the programme covering
${}^2A_\ell$, $\ell\geq5$, the low-rank types ${}^2A_3,{}^2A_4$,
${}^2D_\ell$, $\ell\geq4$, and ${}^2E_6$
\cite{BM-A-high,BM-A-low,BMlocalD,BMlocalE}.
Unlike the stable order-two cases, the coefficient actions here need
not commute with the defining ring automorphism.
We also prove a relative theorem with an arbitrary source ring and
a local target: composing an automorphism with localization gives
such a homomorphism, not necessarily a local automorphism.
This supplies the triality input for the separate local--global
programme of \cite{BM-global}; no global classification is used here.

\subsection{The groups and the classification}

For the classification, let $R$ be a nonzero commutative local ring
with $2,3\in R^\times$, and let $\rho\in\Aut(R)$ satisfy
$\rho^3=\id$.
Put $J=\Jac(R)$, $k=R/J$, and $R_0=R^\rho$, and write $\bar\rho$
for the induced automorphism of $k$.
We allow $\rho=\id$, as well as $\rho\ne\id$ with $\bar\rho=\id$.
The ring need not be Noetherian, reduced, complete, or Henselian,
and $R/R_0$ need not be cubic \'etale.

Let $\mathbf G_Q$ and $\mathbf G_P$ be the split adjoint and simply
connected group schemes of type $D_4$, with character lattices
the root lattice $Q$ and the weight lattice $P$, respectively.
Fix the pinned triality automorphism $\tau$ cycling
$\alpha_1\mapsto\alpha_3\mapsto\alpha_4\mapsto\alpha_1$
and fixing $\alpha_2$, and put
\[
 G_\Lambda(R)=\mathbf G_\Lambda(R)^{\tau\rho},
 \qquad \Lambda=Q,P.
\]
Their elementary subgroups $E_\Lambda(R)$ are generated by the relative
root elements. The relative root system is $G_2$, with short simple
root $b$ and long simple root $a$, whose root maps are
\[
 \begin{aligned}
 y_b(t)&=x_{\alpha_4}(t)x_{\alpha_1}(\rho(t))
         x_{\alpha_3}(\rho^2(t)), &&t\in R,\\
 y_a(v)&=x_{\alpha_2}(v), &&v\in R_0.
 \end{aligned}
\]
Section~\ref{sec:coordinates} fixes the signed root maps so that
Weyl transport changes parameters only by a sign.
Write $R_\gamma=R$ for short roots and $R_\gamma=R_0$ for long roots.
The indexed family $\mathcal F_R=(y_\gamma(1))_{\gamma\in G_2}$
is the \emph{constant frame}.

Define
\[
 \Aut_T(R,\rho)=\{\eta\in\Aut(R):\eta\circ T=T\circ\eta\}.
\]
For $\eta\in\Aut_T(R,\rho)$, the coefficient action is prescribed by
$\phi_\eta(y_\gamma(t))=y_\gamma(\eta(t))$ for $t\in R_\gamma$.
Its existence on the elementary group, not merely on the group
presented by the nonopposite root relations, is part of the theorem.

\begin{maintheorem}\label{thm:main-A}\label{thm:four-groups}
For every $\eta\in\Aut_T(R,\rho)$, the root rule defines an
automorphism $\phi_\eta$ of $E_Q(R)$, which extends uniquely to
an automorphism $\Phi_\eta$ of $G_Q(R)$.
Every automorphism of either group has a unique expression
\[
 \begin{aligned}
 \alpha&=\Int(g)\circ\phi_\eta &&\text{on }E_Q(R),\\
 \alpha&=\Int(g)\circ\Phi_\eta &&\text{on }G_Q(R),
 \end{aligned}
\]
where $g\in G_Q(R)$ and $\eta\in\Aut_T(R,\rho)$.

The central isogeny induces an isomorphism
$\pi_E:E_P(R)\xrightarrow{\sim}E_Q(R)$, and $G_P(R)=E_P(R)$.
Put $\phi_{\eta,P}=\pi_E^{-1}\circ\phi_\eta\circ\pi_E$.
Every automorphism of $E_P(R)=G_P(R)$ has a unique expression
\[
 \alpha=\operatorname{Ad}_P(g)\circ\phi_{\eta,P},
 \qquad g\in G_Q(R),\quad \eta\in\Aut_T(R,\rho).
\]
Conversely, every such pair $(g,\eta)$ defines an automorphism
of each of the four groups.
\end{maintheorem}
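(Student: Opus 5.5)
The plan has two halves. First, construct the automorphisms $\phi_\eta$ (the converse part). Second, show that every abstract automorphism reduces to one of these after an inner correction.

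\emph{Construction of $\phi_\eta$.} For $\eta\in\Aut_T(R,\rho)$, the natural first attempt is to extend $\eta$ coefficientwise to $\mathbf G_Q(R)$. This fails unless $\eta$ commutes with $\rho$, so I would argue differently.

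I would work inside the adjoint representation, i.e.\ on the Lie algebra $\mathfrak g_R$ of type $D_4$ with its $\tau\rho$-twisted fixed structure. The key observation is that $R_0=\ker(\id-\rho)$ and the fixed-point data enter the $G_2$ structure constants only through $T$ and the norm-type expressions $t\rho(t)\rho^2(t)$ and $t\rho(t)$ appearing in the Chevalley commutator formulas for ${}^3D_4$. I will have to check that $\eta$ preserves each of these. Since $2,3$ are invertible, every such expression is a polynomial in traces of products, so each is determined by $T$ applied to products, and $\eta$ commuting with $T$ preserves them. For example, $t\rho(t)+\rho(t)\rho^2(t)+\rho^2(t)t=\tfrac12\bigl(T(t)^2-T(t^2)\bigr)$, and the norm is similar with $\tfrac16$.

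Hence $\eta$ preserves the Steinberg relations among the $y_\gamma$. It also preserves $R_0=T(R)$, using $T(r)=3r$ on $R_0$ and $3\in R^\times$. So $\phi_\eta$ is a homomorphism from the twisted Steinberg group. To descend it to $E_Q(R)$, I would realise $E_Q(R)$ faithfully as the image in $\GL(\mathfrak g_R)$ and show that the kernel of $\mathrm{St}\to E_Q(R)$ is preserved. Over a local ring, Abe--Suzuki give $E_Q(R)=U^-HU$ (Gauss decomposition), so it suffices to check that $\phi_\eta$ preserves the torus relations $h_\gamma(u)$. These are determined by the $w_\gamma(u)$ computed in rank-one subgroups (an $\SL_2(R)$ for long roots and an $\SL_3$-type subgroup of $\SL_2$ over $R$ for short roots). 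Extension to $G_Q(R)$ would use $G_Q(R)=E_Q(R)\cdot H_Q(R)$ with $H_Q(R)$ acting by conjugation. I would define $\Phi_\eta$ on the torus through its action on root subgroups, namely characters composed with $\eta$. Uniqueness of the extension follows because the centralizer of $E_Q(R)$ in $G_Q(R)$ is trivial in the adjoint form.

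\emph{Rigidity.} Let $\alpha$ be an automorphism of $E_Q(R)$. The steps are as follows.
\begin{enumerate}
\item Show that the congruence subgroup $E_Q(R,J)$ is characteristic. Using the Abe--Suzuki and Garge--Makadiya normal structure, it is the largest proper normal subgroup modulo the centre that is not contained in a smaller level, or equivalently a suitable commutator-series characterisation. Then $\alpha$ induces an automorphism of $E_Q(k)$.
\item Apply Steinberg's theorem over $k$. There the graph part is trivial for ${}^3D_4$, and diagonal automorphisms are inner in the adjoint group, so the reduction equals $\Int(\bar g)\circ(\text{field})$.
\item Lift $\bar g$ to $g\in G_Q(R)$ using surjectivity of $G_Q(R)\to G_Q(k)$ (smoothness plus the Gauss decomposition). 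This replaces $\alpha$ by $\alpha'$ whose reduction is a field automorphism, so $\alpha'$ sends the constant frame $\mathcal F_R$ to elements congruent to $\mathcal F_R$ modulo $J$.
\item Prove the key rigidity lemma: any family satisfying the $G_2$ Steinberg relations that is congruent to the constant frame is conjugate to it by an element of $G_Q(R,J)$. I would do this by successive adjustment using the invertibility of $2$ and $3$: solve the linear equations for a torus element, then for unipotent elements of the congruence group, and use $\mathfrak{sl}_2$-triple eigenvalue arguments in the adjoint representation.
\item With the frame fixed, $\alpha'(y_\gamma(t))=y_\gamma(\eta_\gamma(t))$, where the centralizer of $y_\gamma(1)$ and its images under the torus determine $\eta_\gamma$ additively. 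Commutator relations then force all the $\eta_\gamma$ to coincide with a single ring automorphism $\eta$ (respectively its restriction to $R_0$). The relation $[y_b(t),y_a(v)]$, whose coefficients involve $T$-expressions, forces $\eta\circ T=T\circ\eta$.
\end{enumerate}

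\emph{Uniqueness and the remaining groups.} For uniqueness, suppose $\Int(g)\circ\phi_\eta=\id$. Then $g$ normalizes every root subgroup, so $g$ lies in the torus, and acting trivially on the parameter-one elements forces $g=1$ and $\eta=\id$. For $G_Q(R)$, I would restrict to the characteristic subgroup $E_Q(R)$, which is the commutator subgroup for a local ring with $2,3$ invertible, and invoke uniqueness of the extension. For $P$, I would show $\ker\pi_E$ is trivial (the centre $\mu_2\times\mu_2$ is permuted by $\tau$ without nontrivial fixed points compatible with $\rho$) and that $G_P(R)=E_P(R)$ (the simply connected twisted group over a local ring is generated by root elements and the torus, and that torus is generated by the $h_\gamma$). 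Transport then gives the $P$ statement.

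I expect the main obstacle to be step 4, together with the construction descending to $E_Q(R)$ when $\eta$ does not commute with $\rho$.
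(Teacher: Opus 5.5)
Your outline has the same architecture as the paper: a presentation by nonopposite relations with descent, a characteristic residue kernel, classification over $k$, lifting of the conjugator, frame normalization, coefficient recovery, and then $G_Q(R)$ and the $P$ form. The two steps that carry the weight, however, are not established.

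\emph{Descent of $\phi_\eta$.} The first gap is the descent of the substitution from the presented group to $E_Q(R)$.
\begin{enumerate}
\item The decomposition $E_Q(R)=U^-HU$ is false as written, already over $k$: a Weyl representative such as $N_l$ is not in the big cell.
\item More importantly, any decomposition of $E_Q(R)$ describes the target group. It says nothing about the kernel $K_R$ of $\Scover_R\to E_Q(R)$.
\item To reduce to torus relations, you need that every word dying in $E_Q(R)$ already lies in the subgroup $\widetilde L_R$ generated by rank-one words with toral image. This is a Bruhat-type statement inside the cover, and you do not prove it.
\end{enumerate}
The paper proves this inclusion in Section~\ref{sec:centrality}. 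Over $k$, it identifies $\Scover_k$ with the relative Steinberg group of a simply connected group over $k^{\bar\rho}$ and applies Stavrova's Lemma~3.15. Over $R$, it lifts residually toral rank-one words, uses the congruence decomposition $D_R=\widetilde U_C(J)L_R^1\widetilde U_{-C}(J)$, and invokes uniqueness in the absolute open cell. Once $K_R\subseteq\widetilde L_R$ is known, your torus check is automatic. Applying $F_\eta$ to $l\,\widetilde y_\gamma(1)\,l^{-1}=\widetilde y_\gamma(m_\gamma(q_R(l)))$ shows that $F_\eta(l)$ centralizes the lifted constant frame when $l\in K_R$. The paper shortcuts this by noting that $K_R=Z(\Scover_R)$ is characteristic. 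Without the inclusion, though, your construction half has no proof.

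\emph{Frame normalization.} The second gap is your step~4. The relations satisfied by the images of the constant frame are nonlinear. If your ``successive adjustment'' is an iteration, it converges only $J$-adically, and $R$ is not assumed complete, Henselian, or Noetherian. If it is meant to be exact, you do not say which equations are linear or why they are solvable over $R$. The paper's route is exact throughout:
\begin{enumerate}
\item Averaging over the Tits group of order $48$ makes the Weyl representatives standard.
\item Elimination forces the long-root image into the explicit family $X_l(p,z)$, conjugate to $U_l$ by an explicit Tits-centralizing matrix.
\item For the short root, the explicit orbit $C_\kappa^{-1}U_sC_\kappa$ supplies a solution, and a unit Jacobian minor ($384$) gives uniqueness of near-standard solutions.
\item A correction in the frame centralizer $RP_+ + RP_-$ turns the change of basis into a proper similitude. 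Its class is $\tau\rho$-fixed because the adjoint frame centralizer is trivial.
\end{enumerate}

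\emph{Smaller points.}
\begin{enumerate}
\item Root recognition cannot use images under the torus before the torus is known to be preserved. The paper uses only commutation with frame constants, orthogonality, and $\tau\rho$-fixedness.
\item Step~2 omits the case $\bar\rho=\id$, where $E_k$ is split $G_2(k)$ and Steinberg's ${}^3D_4$ theorem does not apply.
\item In step~1, you should check that the cited normal-structure theorems cover $\rho\neq\id=\bar\rho$ with $R/R_0$ non-\'etale. The paper avoids this by proving directly, via extraction, that $\ker(E_Q(R)\to E_Q(k))$ is the greatest proper normal subgroup.
\end{enumerate}
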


Here $\Int(g)(x)=gxg^{-1}$, and $\operatorname{Ad}_P$ is the
canonical adjoint action on the simply connected form.
It requires no lift of $g$ to $G_P(R)$; such a lift can fail to exist
even when $g$ reduces to the identity (Example~\ref{ex:torus}).
The adjoint factor combines inner and diagonal actions, whose further
separation need not be unique. All four groups are centerless,
so no central factor occurs.

If $\rho=\id$, the groups and the theorem reduce to split $G_2$.
If $\bar\rho$ has order three, Proposition~\ref{prop:local-orientation}
gives
\[
 \Aut_T(R,\rho)=N_{\Aut(R)}(\langle\rho\rangle).
\]
Thus $\eta\rho=\rho\eta$ or $\eta\rho=\rho^{-1}\eta$ on the whole
local ring. This can fail when $\rho\ne\id$ but $\bar\rho=\id$.
In Example~\ref{ex:squarezero}, the square-zero algebra
$R=\mathbb Q\oplus V$, $V=\mathbb Q^2$, admits a nontrivial $\rho$
with $T(q,v)=(3q,0)$. Every $L\in\GL_2(\mathbb Q)$ therefore gives
a trace-preserving ring automorphism $(q,v)\mapsto(q,Lv)$,
but not all normalize $\langle\rho\rangle$.
Section~\ref{sec:examples} constructs these group actions directly.
After fixing the constant frame, one recovers the ring and its trace,
but not necessarily $\rho$ up to inversion.

\subsection{Relative rigidity and the proof}

The relative theorem concerns homomorphisms from arbitrary coefficient
rings to local targets.
Let $(S,\rho_S)$ be a coefficient pair: $S$ is a nonzero commutative
ring with identity and $\rho_S\in\Aut(S)$ satisfies $\rho_S^3=\id$.
Let $(A,\rho_A)$ satisfy the local hypotheses above.
Put $E_S=E_Q(S)$, $E_A=E_Q(A)$, and $k=A/\Jac(A)$, and write
$T_S,T_A$ for the corresponding traces.
As before, $S_\gamma=S$ for short roots and $S_\gamma=S^{\rho_S}$
for long roots.

A homomorphism $\Psi:E_S\to E_A$ has a
\emph{liftable prepared residual frame} if some $h\in G_Q(A)$ satisfies
\[
 \operatorname{red}_A
 \bigl(h^{-1}\Psi(y_{\gamma,S}(1))h\bigr)=y_{\gamma,k}(1)
 \qquad(\gamma\in G_2).
\]
It suffices to check $\gamma\in\{\pm a,\pm b\}$.

\begin{maintheorem}\label{thm:main-B}
Every homomorphism $\Psi:E_S\to E_A$ with a liftable prepared
residual frame admits a unique pair $(g,\eta)$, where $g\in G_Q(A)$
and $\eta:S\to A$ is a unital ring homomorphism, such that
\[
 \eta\circ T_S=T_A\circ\eta,\qquad
 \eta(S^{\rho_S})\subseteq A^{\rho_A},
\]
and
\[
 \Psi(y_{\gamma,S}(t))=g\,y_{\gamma,A}(\eta(t))g^{-1}
 \qquad(\gamma\in G_2,\ t\in S_\gamma).
\]
For any preparation $h$, one has $\bar g=\bar h$.

Conversely, every unital trace-preserving ring homomorphism
$\eta:S\to A$ induces a unique homomorphism $E_S\to E_A$
by the same root rule with $g=1$.
Neither injectivity nor surjectivity is assumed, and the source
need not contain $1/2$ or $1/3$.
\end{maintheorem}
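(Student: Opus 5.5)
Replacing $\Psi$ by $\Int(h)^{-1}\circ\Psi$, we may assume $h=1$, so that each $\Psi(y_{\gamma,S}(1))$ reduces modulo $J=\Jac(A)$ to $y_{\gamma,k}(1)$. The first task is to make the frame exactly standard over $A$, not just modulo $J$. For this I would use the theory of unipotent and torus subgroups over local rings: the images $X_\gamma:=\Psi(y_{\gamma,S}(1))$ satisfy the Chevalley commutator relations of $G_2$ with parameter one, and they reduce to the standard frame. Consider first the pair $X_{b}$, $X_{-b}$ together with the Weyl element $w_b=X_bX_{-b}^{-1}X_b$. This element is a lift of the standard residual Weyl element, so a Hensel-type argument is needed. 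Since $2,3$ are units, it can be run using only the fact that $1+J$ consists of units: one conjugates by an element of the congruence subgroup $G_Q(A,J)$, solving linear equations over $J$ order by order. This is done inside the finitely generated subalgebra of $\mathfrak g\otimes A$, so no completeness is needed. The result is that the image of the torus $\Psi(h_\gamma(u))$, for $u\in S^\times$ in the image of the root subgroups, normalizes the standard root subgroups.

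Once the torus images are diagonal, I would prove that each $\Psi(y_{\gamma,S}(t))$ lies in the standard $U_\gamma(A)$ by a commutator argument. It commutes with $\Psi(y_{\delta}(1))$ for all $\delta$ with $\gamma+\delta\notin\Phi\cup\{0\}$. An element of $E_A$ congruent to the standard unipotent centralizer data must then lie in $U_\gamma(A)$ times a central, hence trivial, part. This step uses that the groups are centerless, as stated after Theorem~\ref{thm:main-A}. After that we can write $\Psi(y_{\gamma,S}(t))=y_{\gamma,A}(\eta_\gamma(t))$ with $\eta_\gamma:S_\gamma\to A_\gamma$ additive and $\eta_\gamma(1)=1$. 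Further conjugation by a diagonal element of $G_Q(A)$, which is congruent to $1$ modulo $J$, normalizes signs and scalars. Applying the relation $w_\gamma y_\delta(t)w_\gamma^{-1}=y_{w\delta}(\pm t)$ with sign-only transport, as fixed in Section~\ref{sec:coordinates}, identifies all short $\eta_\gamma$ with one map $\eta$, and all long ones with a single map $\eta_0$.

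Next I would read off the ring structure from the $G_2$ commutator formulas for triality groups. The formula $[y_b(t),y_{a}(v)]$ contains terms such as $y_{a+b}(\pm vt)$ and $y_{2a+3b}(\pm v\,t\rho(t)\rho^2(t))$-type norms, while $[y_b(s),y_{a+b}(t)]$ gives long-root terms of the form $T(s\rho(t))$. Comparing these yields $\eta(vt)=\eta_0(v)\eta(t)$ and $\eta_0(T_S(x))=T_A(\eta(x))$. It also yields $\eta(st)=\eta(s)\eta(t)$, by combining the $T$-formula with $\rho$-twisted products: using $2,3\in A^\times$ one polarizes the norm and trace identities to extract multiplicativity. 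Together with $\eta_0=\eta|_{S^{\rho_S}}$ this gives $\eta(S^{\rho_S})\subseteq A^{\rho_A}$. I expect this extraction of multiplicativity, without any compatibility of $\eta$ with $\rho$, to be the main obstacle. The Example in Section~\ref{sec:examples} shows that only $T$, not $\rho$, can be recovered, so the identities must be derived purely from products whose commutator coefficients are symmetric in the $\rho$-conjugates. Uniqueness follows because $\eta$ is determined by the root images, and $g$ is determined modulo the centralizer of $E_A$ in $G_Q(A)$, which is trivial. The congruence $\bar g=\bar h$ comes from the fact that the normalizing conjugations used above lie in $G_Q(A,J)$.

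For the converse, given a unital trace-preserving $\eta$, I would take the Steinberg-type group $\widetilde E_S$ presented by the relative root relations. The map $\eta$ respects every such relation, because the relations are expressed through ring operations and $T$ (and the norm, which is a polynomial in $T$ of products). This gives a homomorphism $\widetilde E_S\to E_A$. It remains to check that this homomorphism factors through $E_S$, i.e.\ that it kills the kernel of $\widetilde E_S\to E_S$. I would argue via functoriality: $\eta$ induces a map of the adjoint representations restricted to the $\tau\rho$-fixed relative forms. Concretely, the matrix entries of $y_\gamma(t)$ in a suitable basis of the fixed Lie algebra are $T$-polynomials in $t$, so relations holding in $E_S$ transport to $E_A$. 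Uniqueness is clear since $E_S$ is generated by root elements.
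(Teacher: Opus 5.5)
\textbf{The forward direction fails at your first step, and everything after it depends on that step.} You need a conjugator that makes each $\Psi(y_{\gamma,S}(1))$ \emph{exactly} standard over $A$. ``Solving linear equations over $J$ order by order'' cannot supply one.
\begin{itemize}
\item The successive corrections converge only $J$-adically. But $A$ is an arbitrary local ring: it need not be complete or Henselian (e.g.\ $\mathbb C[z]_{(z)}$), and possibly $\bigcap_nJ^n\neq0$. Working inside a finitely generated subalgebra does not change this.
\item Even a single step is not automatic. For $X\equiv U\pmod J$, the equation $(1+Y)X=U(1+Y)$ reads $YX-UY=U-X$. Its reduction $Y\mapsto YU-UY$ is singular, so the relations must be used to kill the obstruction.
\item A linear change of basis must still be corrected to a proper similitude whose class is $\tau\rho_A$-fixed. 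You do not address this.
\end{itemize}
The paper obtains an exact conjugator by finite means:
\begin{itemize}
\item it averages coherent representatives over the Tits group of order $48$ (Proposition~\ref{prop:full-tits-averaging});
\item it shows by elimination that the long image is $X_l(p,z)$ with $p^2-p-z^2=0$, which $A(-z,p,0,1)$ standardizes;
\item it compares the short image with the explicit orbit $C_\kappa^{-1}U_sC_\kappa$, using the invertible Jacobian minor $384$ and the telescoping Lemma~\ref{lem:d4-divided-difference-uniqueness}, with no Hensel property;
\item it corrects by $D=P_++(u/s)P_-$ and uses the trivial frame centralizer (Theorem~\ref{thm:d4-final-geometric-correction}).
\end{itemize}
Your recognition step presupposes this exact frame.

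Two smaller points. First, uniqueness needs triviality of the centralizer of the constant frame (Lemma~\ref{lem:ambient-center}), not of $E_A$. Two conjugators are only known to differ by an element centralizing the $y_\gamma(1)$. Second, the ring recovery you single out as the main obstacle is short:
\begin{itemize}
\item the $d$-factor of \eqref{eq:cb} gives $\eta(t\times u)=\eta(t)\times\eta(u)$;
\item setting $u=1$ gives trace preservation;
\item the identity $2tu=t\times u+T(t)u+T(u)t-T(t\times u)$ gives multiplicativity.
\end{itemize}
A coefficient like $T(s\rho(t))$, as you guess, does not occur, and it would not be preserved.

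\textbf{Your converse takes a different route from the paper, but as written it is not yet an argument.} The paper proves that the graded-cover kernel is central over local rings (rank-one lifting, the congruence decomposition, Stavrova's field result) and then localizes at $\eta^{-1}(\Jac A)$. In your sketch, the fixed Lie algebra is only an $S_0$-module. Moreover, $\id\otimes\eta$ does not preserve fixed points when $\eta\rho_S\neq\rho_A\eta$; the square-zero example shows the right map is not $\id_{\mathfrak g}\otimes L$.

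The idea can be completed in four steps:
\begin{enumerate}
\item Let $\Theta$ apply $\eta$ to the orbit coordinates $S^7\oplus S_0^7$ of the fixed Lie algebra.
\item Check that $\Theta$ is a Lie-ring map. All structure constants in these coordinates are integral combinations of $vt$, $tu$, $T(tu)$, $T(s)t$ and $t\times u=(T(t)-t)(T(u)-u)-T(tu)+tu$. These can be read off from the first-order parts of \eqref{eq:ab}--\eqref{eq:cd}, sign-only transport, and the Cartan eigenvalues.
\item Cancel $2$ and $3$ in $A$ to pass to the divided powers of degree at most three that give $\operatorname{Ad}(y_\gamma(t))$.
\item Crucially, the image of a word trivial in $E_S$ then fixes $E_b=M_{\alpha_4}+M_{\alpha_1}+M_{\alpha_3}$, $E_a=M_{\alpha_2}$ and their transposes. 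After exponentiating with $1/6$, it centralizes the constant frame, so it is trivial by Lemma~\ref{lem:ambient-center}.
\end{enumerate}
With these steps, which are absent from your sketch, this route would bypass the cover and the localization. Without them, ``relations holding in $E_S$ transport to $E_A$'' is just the claim to be proved.
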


The proof has three parts.
First, after preparing the residual frame, we normalize the Weyl
representatives by averaging over the relative Tits group of order $48$.
The long simple-root image is treated by elimination; for the short
root, an explicit centralizer orbit supplies a comparison solution
and an invertible Jacobian minor proves local uniqueness.
A determinant identity corrects the resulting change of basis to
a proper orthogonal similitude. Triviality of the adjoint frame
centralizer forces its projective class to be fixed by $\tau\rho_A$.

Second, commutation with the frame recognizes the root subgroups,
and Weyl transport and commutator relations recover a single unital
trace-preserving ring map.
For the converse, a trace-preserving map acts on the graded covers
of \cite{EJK}. Using rank-one lifting, a congruence decomposition,
and the field comparison of \cite{Stavrova}, we prove that the local
cover kernel is its center.
Centrality of the source kernel and triviality of the target frame
centralizer give coefficient substitutions for local sources;
localization gives the converse for arbitrary sources and local targets.

Third, independently of cover centrality, we show that
$\ker(E_Q(R)\to E_Q(k))$ is the greatest proper normal subgroup
of $E_Q(R)$, and hence characteristic.
The automorphism theorem over $k$ and lifting of the adjoint
conjugator supply the residual frame required by the relative theorem.
Torus compatibility and characteristicity of the elementary subgroup
give the full adjoint classification; the central isogeny gives
the simply connected form.

Section~\ref{sec:coordinates} fixes the root formulas;
Sections~\ref{sec:normalization} and~\ref{sec:recognition} prove
normalization and relative rigidity.
Section~\ref{sec:centrality} constructs the coefficient actions,
and Section~\ref{sec:classification} proves Theorem~\ref{thm:main-A}.
Section~\ref{sec:examples} treats coefficient symmetries and examples.
The appendices contain the finite normalization data and the
root-factor extraction proof. 
The accompanying computational supplement~\cite{TrialitySupplement}
records the expanded polynomials and the square-zero calculation.

%---------------------
%---------------------
%---------------------

\medskip
\section{Triality groups and root coordinates}
\label{sec:coordinates}\label{sec:d4-8d-vector-v4}
\label{sec:d4-weyl-coherent-parameters}

We fix the root coordinates and establish the local lifting and
factorization properties used below.
A \emph{coefficient pair} consists of a nonzero commutative ring $R$
with identity and an automorphism $\rho$ satisfying $\rho^3=\id$.
All ring maps are unital. Put
\[
 R_0=R^\rho,\qquad T(t)=t+\rho(t)+\rho^2(t),\qquad
 t^\#=\rho(t)\rho^2(t),\qquad N(t)=tt^\#,
\]
\[
 t\times u=(t+u)^\#-t^\#-u^\#
          =\rho(t)\rho^2(u)+\rho(u)\rho^2(t).
\]
The maps $T,N$ take values in $R_0$, $T$ is $R_0$-linear, and
$\times$ is symmetric and $R_0$-bilinear.
A pair is \emph{local} when $R$ is local and $2,3\in R^\times$;
then write $J=\Jac(R)$, $k=R/J$, $J_0=J\cap R_0$, and
$\bar\rho$ for the induced automorphism of $k$.

For the split pinned forms $\mathbf G_Q,\mathbf G_P$ of
Section~\ref{sec:introduction}, choose the simple roots
\[
 \alpha_1=e_1-e_2,\qquad \alpha_2=e_2-e_3,\qquad
 \alpha_3=e_3-e_4,\qquad \alpha_4=e_3+e_4.
\]
The pinned triality automorphism $\tau$ cycles
$\alpha_1\mapsto\alpha_3\mapsto\alpha_4\mapsto\alpha_1$
and fixes $\alpha_2$. It commutes with the coefficient action of
$\rho$. Set
\[
 \delta=\tau\rho,\qquad
 G_\Lambda(R)=\mathbf G_\Lambda(R)^\delta
 \qquad(\Lambda=Q,P).
\]
The subgroup $E_\Lambda(R)$ is generated by the relative root elements
below; abbreviate $E_R=E_Q(R)$ and $G_R=G_Q(R)$.
Triality cycles the three nonzero classes of
$P/Q\simeq(\mathbb Z/2)^2$, so $Q,P$ are the only
triality-invariant isogeny types.

\subsection{The matrix model}

We use the vector representation to fix the absolute root signs.
In the ordered basis
$(e_1,e_2,e_3,e_4,e_{-1},e_{-2},e_{-3},e_{-4})$, let
\[
 q_8(x)=\sum_{i=1}^4x_ix_{-i},\qquad
 J_8=\begin{pmatrix}0&I_4\\ I_4&0\end{pmatrix},
\]
where $J_8$ is the polar matrix. For $1\leq i<j\leq4$, set
\begin{align*}
 M_{e_i-e_j}&=E_{i,j}-E_{-j,-i},&
 M_{-e_i+e_j}&=E_{j,i}-E_{-i,-j},\\
 M_{e_i+e_j}&=E_{i,-j}-E_{j,-i},&
 M_{-e_i-e_j}&=-E_{-i,j}+E_{-j,i}.
\end{align*}
Then $M_{-r}=M_r^t$, $M_r^2=0$, and $X_r(t)=I_8+tM_r$
preserves $q_8$ and has determinant one over every coefficient ring.
The central isogenies
$\mathbf G_P\to\mathbf{SO}_8\to\mathbf G_Q$ send
$x_{r,P}(t)$ to $X_r(t)$ and then to $x_{r,Q}(t)$.
Root words are read in either pinned form or in these matrices.
Triality acts on the pinned groups, not by conjugation with an
$8\times8$ matrix.

For a local pair, $\operatorname{Pic}(R)=0$, so
\[
 \mathbf G_Q(R)=\operatorname{PGO}_8^+(R)
 \simeq\operatorname{GO}_8^+(R)/R^\times,
\]
where the quotient is by scalar matrices and
\[
 \operatorname{GO}_8^+(R)=
 \{C\in\GL_8(R):C^tJ_8C=\lambda J_8,\quad
   \det C=\lambda^4\text{ for some }\lambda\in R^\times\}.
\]
This adjoint group of points, also denoted by $\operatorname{PSO}_8(R)$,
need not equal $\operatorname{SO}_8(R,J_8)/\{\pm I_8\}$:
the multiplier need not be a square.
Every element of $E_R$ nevertheless has an orthogonal determinant-one
representative, obtained from a root word.
Over source rings without $1/2$, the integral root maps are interpreted
in the split group scheme of $q_8$.

\subsection{Relative root maps}

We choose parameters so that signed Weyl transport involves no powers
of $\rho$. Write
\[
 b=\beta_s,\qquad a=\beta_l,\qquad
 c=b+a,\qquad d=2b+a,\qquad e=3b+a,\qquad f=3b+2a.
\]
The roots $b,c,d$ are short and $a,e,f$ are long.
Put $R_\gamma=R$ for short roots and $R_\gamma=R_0$ for long roots.
Suppressing the isogeny-type subscript, their matrix representatives are
\begin{align*}
 y_{\epsilon b}(t)
 &=X_{\epsilon\alpha_4}(t)
   X_{\epsilon\alpha_1}(\rho(t))
   X_{\epsilon\alpha_3}(\rho^2(t)),\\
 y_{\epsilon a}(v)
 &=X_{\epsilon\alpha_2}(v),\\
 y_{\epsilon c}(t)
 &=X_{\epsilon(\alpha_2+\alpha_4)}(t)
   X_{\epsilon(\alpha_1+\alpha_2)}(-\rho(t))
   X_{\epsilon(\alpha_2+\alpha_3)}(\rho^2(t)),\\
 y_{\epsilon d}(t)
 &=X_{\epsilon(\alpha_2+\alpha_3+\alpha_4)}(\rho(t))
   X_{\epsilon(\alpha_1+\alpha_2+\alpha_4)}(\rho^2(t))
   X_{\epsilon(\alpha_1+\alpha_2+\alpha_3)}(t),\\
 y_{\epsilon e}(v)
 &=X_{\epsilon(\alpha_1+\alpha_2+\alpha_3+\alpha_4)}(v),\\
 y_{\epsilon f}(v)
 &=X_{\epsilon(\alpha_1+2\alpha_2+\alpha_3+\alpha_4)}(v),
 \qquad \epsilon\in\{1,-1\},\quad t\in R,\quad v\in R_0.
\end{align*}
In the pinned forms, replace $X_r$ by $x_{r,\Lambda}$.
The orbit factors commute. The cyclic signs of $\tau$ on the
$c$-orbit, in the displayed order, are $-1,-1,+1$;
on the other orbits they are positive.
Thus all products are $\delta$-fixed.
Negative root maps retain the same signs and powers of $\rho$,
so $y_{-\gamma}(t)=y_\gamma(t)^t$ in matrices.
Absolute unipotent coordinates make every root map injective, and
\[
 y_\gamma(t)y_\gamma(u)=y_\gamma(t+u)
 \qquad(t,u\in R_\gamma).
\]

Let $s_s=s_b$, $s_l=s_a$ be the simple relative reflections and put
\[
 \begin{aligned}
 N_s&=y_b(1)y_{-b}(-1)y_b(1),& H_s&=N_s^2,\\
 N_l&=y_a(1)y_{-a}(-1)y_a(1),& H_l&=N_l^2.
 \end{aligned}
\]
Over $\mathbb Z$, these signed monomial matrices generate a group
$\mathcal T_{\mathbb Z}$ of order $48$, with diagonal subgroup
$\langle H_s,H_l\rangle$ of order four and Weyl quotient $W(G_2)$
of order twelve.
The matrices remain distinct over every nonzero ring with $2$
invertible; write $\mathcal T$ for their local target specialization.
Their conjugation action is
\begin{equation}
 N_i y_\gamma(t)N_i^{-1}
 =y_{s_i\gamma}(\varepsilon_{i,\gamma}t),
 \qquad \varepsilon_{i,\gamma}\in\{1,-1\},\quad i=s,l,
 \label{eq:coherent-weyl}
\end{equation}
with positive-root data
\begin{equation}
 \begin{array}{c|rrrrrr}
 \gamma&b&a&c&d&e&f\\\hline
 (s_s\gamma,\varepsilon_{s,\gamma})
 &(-b,-1)&(e,-1)&(d,-1)&(c,1)&(a,1)&(f,1)\\
 (s_l\gamma,\varepsilon_{l,\gamma})
 &(c,1)&(-a,-1)&(b,-1)&(d,1)&(f,1)&(e,-1)
 \end{array}
 \label{eq:weyl-table}
\end{equation}
For negative inputs, negate the output root and retain the parameter
sign; this follows by transposition from $N_i^t=N_i^{-1}$.
Direct multiplication with independent coefficient slots proves the
positive identities. Injectivity of the central isogenies on absolute
unipotent subgroups gives the same identities in both pinned forms.
The chosen powers of $\rho$ in $y_d$ ensure sign-only transport.
The constant frame $\mathcal F_R=(y_\gamma(1))_{\gamma\in G_2}$
is integral and independent of $\rho$; its four simple signed elements
determine the whole family by Weyl words.

\subsection{Commutator relations}

The commutator coefficients use ring operations and $T,\#,N,\times$.
Our conventions are
\[
 \commR{g}{h}=g^{-1}h^{-1}gh,\qquad
 \commL{g}{h}=ghg^{-1}h^{-1},
\]
so $\commL{g}{h}=\commR{g^{-1}}{h^{-1}}$.
For $t,u\in R$ and $v,w\in R_0$, the nontrivial positive-root
relations are
\begin{align}
 \commR{y_a(v)}{y_b(t)}
 &=y_c(vt)y_d(-vt^\#)y_e(vN(t))y_f(v^2N(t)),
 \label{eq:ab}\\
 \commR{y_a(v)}{y_e(w)}
 &=y_f(vw),
 \label{eq:ae}\\
 \commR{y_c(t)}{y_b(u)}
 &=y_d(-(t\times u))y_e(T(tu^\#))y_f(T(ut^\#)),
 \label{eq:cb}\\
 \commR{y_d(t)}{y_b(u)}
 &=y_e(-T(tu)),
 \label{eq:db}\\
 \commR{y_c(t)}{y_d(u)}
 &=y_f(T(tu)).
 \label{eq:cd}
\end{align}
The other ten unordered positive pairs commute.
Every pair of distinct nonopposite roots lies in a positive system,
so \eqref{eq:coherent-weyl} gives all sixty unordered signed pairs.
Reversing the inputs inverts the entire output word, including its
factor order.

These identities follow by multiplying the absolute matrices over
$\mathbb Z[t_0,t_1,t_2,u_0,u_1,u_2,v,w]$ and specializing the triples
to $(t,\rho(t),\rho^2(t))$ and $(u,\rho(u),\rho^2(u))$.
They hold in both pinned forms because each identity lies in one
absolute unipotent subgroup, on which the central isogenies are
injective.
Within a positive system, every correction has greater height than
either input. Collection therefore gives one factor per root in any
prescribed order, uniquely by absolute coordinates.
Section~\ref{sec:centrality} treats the presentation obtained by
amalgamating these unipotent groups.

\subsection{Local structure}
\label{subsec:local-structure}

We now prove the lifting and factorization properties.
Throughout this subsection, $(R,\rho)$ is local.
Let $T_\Lambda$ be the pinned split maximal torus of $\mathbf G_\Lambda$,
let $U^\pm_\Lambda$ be the relative unipotent subgroups, and put
$H_\Lambda=T_\Lambda(R)^\delta$ for $\Lambda=Q,P$.

\begin{lemma}\label{lem:local-cell}
The ring $R_0$ is local with maximal ideal $J_0$, and
$R_0/J_0\simeq k^{\bar\rho}$.
Reduction is surjective on root parameter groups and on
$H_\Lambda\to T_\Lambda(k)^{\tau\bar\rho}$.
If $z\in G_\Lambda(R)$ has residue in the absolute open cell, then
\[
 z=u_+hu_-,\qquad
 u_\pm\in U^\pm_\Lambda,\quad h\in H_\Lambda,
\]
uniquely, and reduction preserves these three factors.
\end{lemma}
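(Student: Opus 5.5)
The argument has three steps: locality of $R_0$ by an integrality argument, surjectivity on parameters and tori by averaging over $\langle\delta\rangle$, and the cell decomposition from the absolute big cell plus uniqueness.

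\emph{Locality of $R_0$.} First, $R$ is integral over $R_0$: each $t\in R$ is a root of
\[
 X^3-T(t)X^2+(t\times t/2)X-N(t),
\]
whose coefficients lie in $R_0$. Equivalently, this is $\prod_i(X-\rho^i(t))$, with middle coefficient $t^\#+\rho(t^\#)+\rho^2(t^\#)=T(t^\#)\in R_0$. Next, $\rho$ permutes maximal ideals, so $\rho(J)=J$. Take $x\in R_0\setminus J_0$. Then $x\in R^\times$, and $\rho(x^{-1})=\rho(x)^{-1}=x^{-1}$, so $x^{-1}\in R_0$. Hence $R_0\setminus J_0\subseteq R_0^\times$, and $R_0$ is local with maximal ideal $J_0$.

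\emph{The residue field of $R_0$.} The map $R_0/J_0\to k$ is injective with image inside $k^{\bar\rho}$. For surjectivity, take $\bar t\in k^{\bar\rho}$ and lift it to $t\in R$. Then $\tfrac13T(t)\in R_0$, which uses $3\in R^\times$, and its residue is $\tfrac13(\bar t+\bar\rho\bar t+\bar\rho^2\bar t)=\bar t$. The same averaging $t\mapsto\tfrac13T(t)$ gives surjectivity of the long-root parameter groups $R_0\to k^{\bar\rho}$. Short-root parameters are $R\to k$, which is surjective trivially.

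\emph{The torus.} Here $T_\Lambda(R)=\operatorname{Hom}(\Lambda,R^\times)$, and $\delta$ acts by $\chi\mapsto\rho\circ\chi\circ\tau^{-1}$. Surjectivity of $H_\Lambda\to T_\Lambda(k)^{\tau\bar\rho}$ amounts to lifting $\delta$-fixed points of the quotient of the $\langle\delta\rangle$-module $T_\Lambda(R)$ by $T_\Lambda(R,J)=\operatorname{Hom}(\Lambda,1+J)$. The obstruction lies in $H^1(\langle\delta\rangle,T_\Lambda(R,J))$.

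I expect this $H^1$ vanishing to be the main obstacle. I would prove it using that $1+J$ is uniquely $3$-divisible "up to" the needed filtration, but with $R$ not complete I would rather avoid logarithms. My first attempt is the norm trick. Given a lift $h$ with $\delta(h)=hj$, $j\in T(R,J)$, one has $j\,\delta(j)\,\delta^2(j)=1$, which follows because $\delta^3=\id$. Then I would look for $m$ with $m^{-1}\delta(m)=j^{-1}$. The candidate is an averaging of the form $m=\bigl(j^{2}\delta(j)\bigr)^{1/3}$, using the cube root of a principal unit.

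Cube roots exist in $1+J$ only if Hensel-type conditions hold, and locality alone does not give them. So the fallback is to avoid cube roots altogether. One can write $\Lambda\otimes R^\times$ explicitly: $\tau$ permutes a basis of $P$ or $Q$ up to the fixed $\alpha_2$ direction, as a permutation module $\mathbb Z\oplus\mathbb Z[C_3]$ or close to it. One then computes the fixed points directly as $R_0^\times\times R^\times$-type groups, for which surjectivity onto residues reduces to the parameter surjectivity already proved. I will commit to this explicit lattice computation, checking that $Q$ and $P$ are permutation-like $\langle\tau\rangle$-lattices (they are, via the basis $\alpha_2,\alpha_1,\alpha_3,\alpha_4$ for $Q$ and the fundamental weights for $P$).

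\emph{Cell decomposition.} Since $\bar z$ lies in the absolute open cell and $R$ is local, $z$ lies in the absolute big cell $\Omega=U^+_{\rm abs}T\,U^-_{\rm abs}$. This holds because $\Omega$ is the principal open set defined by a function $f$, and $f(z)$ is a unit because $f(\bar z)\neq0$. Hence $z=u_+hu_-$ uniquely in the absolute groups. Applying $\delta$ and using $\delta(z)=z$, together with $\delta$-stability of $U^\pm_{\rm abs}$ and $T$, uniqueness forces each factor to be $\delta$-fixed. The fixed points of $U^\pm_{\rm abs}$ are exactly the relative unipotent groups $U^\pm_\Lambda$, by absolute coordinates on each $\tau$-orbit as in the product formulas for $y_\gamma$. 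Finally, reduction is a homomorphism that preserves each absolute factor, so it preserves the three factors.
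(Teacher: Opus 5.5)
Your proposal is correct and follows the paper's proof essentially step for step. Fixed units have fixed inverses, averaging by $\tfrac13T$ lifts fixed residues, and the $\tau$-permuted bases (simple roots for $Q$, fundamental weights for $P$) reduce the torus lift to lifting one outer coordinate and averaging the fixed one; the absolute big cell with uniqueness then gives the $\delta$-fixed factorization compatible with reduction.

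The integrality detour is unnecessary, and its first-written middle coefficient $(t\times t)/2=t^\#$ is not $\rho$-fixed; the correct coefficient is $T(t^\#)$, as you note afterwards. The $H^1$ discussion can be dropped, since the explicit lattice computation you commit to is exactly the paper's argument.
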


\begin{proof}
A fixed unit has a fixed inverse, so the nonunits of $R_0$ are
exactly $J_0$.
For $\bar v\in k^{\bar\rho}$, averaging any lift $t$ gives
$T(t)/3\in R_0$ with residue $\bar v$, and this lift is a unit
when $\bar v\ne0$. Short parameters lift arbitrarily.

The character lattice of $T_\Lambda$ has a basis permuted by $\tau$:
the simple roots for $Q$ and the fundamental weights for $P$.
Lift one unit coordinate on the outer orbit and take its
$\rho$-conjugates; average a lift of the fixed coordinate.
This gives the required fixed torus lift.

The absolute multiplication map
\[
 U^+_{\rm abs}\times T_\Lambda\times U^-_{\rm abs}
 \longrightarrow\mathbf G_\Lambda
\]
is an open immersion \cite[Theorem~5.1.13]{Conrad}.
Its inverse image under $z$ is open in $\Spec R$ and contains the
closed point, hence is all of $\Spec R$.
The unique absolute factors are $\delta$-fixed because $\delta$
preserves the cell.
Ordering absolute roots by their commuting triality orbits identifies
the fixed unipotent coordinates with the relative root maps above.
Thus the factors belong to $U^\pm_\Lambda$ and $H_\Lambda$;
uniqueness also gives compatibility with reduction.
\end{proof}

\begin{proposition}[Factorization]\label{prop:factorization}
For $\Lambda=Q,P$, one has
\[
 G_\Lambda(R)=E_\Lambda(R)H_\Lambda,\qquad
 E_\Lambda(R)\trianglelefteq G_\Lambda(R),\qquad
 G_P(R)=E_P(R).
\]
Reduction $G_\Lambda(R)\to G_\Lambda(k)$ is surjective.
\end{proposition}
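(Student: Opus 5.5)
The plan is to reduce everything to the residue field via Lemma~\ref{lem:local-cell}, using the standard fact that over a field $G_\Lambda(k)$ is generated by $E_\Lambda(k)$ together with the fixed torus, with $E_\Lambda(k)$ normal. Over a field the open cell $U^+T U^-$ is dense, so every element of the fixed group is a product of two open-cell elements; alternatively one invokes the Bruhat decomposition for the quasi-split group $G_\Lambda(k)$.

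First I would prove surjectivity of reduction $G_\Lambda(R)\to G_\Lambda(k)$. The group $G_\Lambda(k)$ is generated by the elementary group $E_\Lambda(k)$ (images of relative root elements) and $T_\Lambda(k)^{\tau\bar\rho}$: this is the Bruhat decomposition with the Weyl representatives $N_s,N_l$, which lie in $E_\Lambda(k)$. Root parameters lift by Lemma~\ref{lem:local-cell}, and so do torus elements. Hence $E_\Lambda(R)H_\Lambda$ surjects onto $G_\Lambda(k)$.

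Next I would prove $G_\Lambda(R)=E_\Lambda(R)H_\Lambda$. Given $z\in G_\Lambda(R)$, its residue $\bar z$ lies in $G_\Lambda(k)$. Choose $w\in E_\Lambda(R)$ (a product of root elements and Tits representatives) such that $\bar w\bar z$ lies in the absolute open cell. This is possible over $k$: write $\bar z=\bar u\dot n\bar h\bar u'$ via Bruhat, and multiply by a lift of $(\bar u\dot n)^{-1}$ with $\dot n$ in the Tits group. Then $wz$ has residue in the open cell, so Lemma~\ref{lem:local-cell} gives $wz=u_+hu_-$ with $u_\pm\in U^\pm_\Lambda\subseteq E_\Lambda(R)$ and $h\in H_\Lambda$. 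Therefore $z\in E_\Lambda(R)H_\Lambda E_\Lambda(R)$. Normality of $E_\Lambda(R)$ under $H_\Lambda$ is immediate because torus conjugation rescales root parameters by characters, which preserve $R_\gamma$. Hence $z\in E_\Lambda(R)H_\Lambda$. Normality in $G_\Lambda(R)$ then follows from the factorization.

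Finally I would prove $G_P(R)=E_P(R)$, which reduces to showing $H_P\subseteq E_P(R)$. For the simply connected form, the fixed torus is generated by the relative coroot elements $h_b(\epsilon)$ for $\epsilon\in R^\times$ and $h_a(u)$ for $u\in R_0^\times$, since the fundamental-weight basis is permuted by $\tau$. The torus is thus an induced torus $\mathrm{Res}\,\mathbb G_m\times\mathbb G_m$, and the Steinberg-type elements $y_\gamma(\epsilon)y_{-\gamma}(-\epsilon^{-1})y_\gamma(\epsilon)\cdot N^{-1}$ produce exactly these coroot images. This requires the rank-one relative subgroups for $b$ to be $\SL_2$ over $R$ (with parameters twisted by $\rho$) rather than a quotient, which is the simply connected property.

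The main obstacle is this last identification: the torus computation that the coroot images of $y_{\pm b}$ and $y_{\pm a}$ exhaust $H_P$, with the correct matching of the cocharacters $\alpha_4^\vee+\rho\alpha_1^\vee+\rho^2\alpha_3^\vee$.
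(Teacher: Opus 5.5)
Your proposal is correct and follows essentially the paper's proof: factor the residue over $k$ by a twisted Bruhat decomposition whose Weyl representatives are relative Tits words, lift the root and torus factors by Lemma~\ref{lem:local-cell}, absorb what remains through that lemma's open-cell factorization together with the fact that $H_\Lambda$ normalizes the relative root subgroups, and for $P$ write every fixed torus point as $h_{b,P}(r)h_{a,P}(s)$ with each factor equal to $w_\gamma(v)w_\gamma(1)^{-1}$. The differences are minor: the paper derives the fixed-point Bruhat decomposition from the absolute one rather than citing it for the quasi-split group (cells permuted by $\tau\bar\rho$, $w\in W(D_4)^\tau$, uniqueness making all three factors fixed), and the step you flag as the main obstacle is immediate there, since the simple coroots form a $\tau$-permuted cocharacter basis and the three outer rank-one identities commute.
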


\begin{proof}
For a fixed torus point, the three multipliers on a short orbit
form a $\rho$-orbit, and the multiplier on a long root is fixed.
Thus $H_\Lambda$ normalizes every relative root subgroup, over both
$R$ and $k$.

The absolute Bruhat decomposition over an algebraic closure of $k$
has cells and root-coordinate maps defined over $k$, so its unique
factorizations hold over $k$ \cite[Theorem~1.4.12]{Conrad}.
The action of $\tau\bar\rho$ sends the cell at $w$ to that at
$\tau(w)$; a fixed point therefore has $w\in W(D_4)^\tau$.
This Weyl group is generated by $s_{\alpha_2}$ and
$s_{\alpha_1}s_{\alpha_3}s_{\alpha_4}$
\cite[Theorem~32]{Steinberg}, so $w$ has a fixed representative
$n_w$ given by a relative Tits word.
For $\bar z\in G_\Lambda(k)$, write its unique factorization as
\[
 \bar z=\bar u_w n_w\bar h\bar u,\qquad
 \bar u_w\in U_w(k),\quad
 \bar h\in T_\Lambda(k),\quad
 \bar u\in U^+_{\rm abs}(k),
\]
where $U_w=U^+_{\rm abs}\cap n_wU^-_{\rm abs}n_w^{-1}$.
These unipotent subgroups are $\tau\bar\rho$-stable, so uniqueness
makes all three factors fixed.
Their orbit coordinates give
$G_\Lambda(k)=E_\Lambda(k)T_\Lambda(k)^{\tau\bar\rho}$.

Lemma~\ref{lem:local-cell} lifts the elementary and torus factors.
For $z\in G_\Lambda(R)$, choose $e_0h_0$ with the same residue.
The remainder $(e_0h_0)^{-1}z$ reduces to the identity and hence lies
in $U^+_\Lambda H_\Lambda U^-_\Lambda$ by the same lemma.
Since $H_\Lambda$ normalizes $E_\Lambda(R)$, this proves
$G_\Lambda(R)=E_\Lambda(R)H_\Lambda$ and normality.
Lifting any residue factorization also proves surjectivity of reduction.

For $P$, the simple coroots form a cocharacter basis, so every fixed
torus point is $h_{b,P}(r)h_{a,P}(s)$ with
$r\in R^\times$, $s\in R_0^\times$, where
\[
 \begin{aligned}
 h_{b,P}(r)
 &=h_{\alpha_4,P}(r)
   h_{\alpha_1,P}(\rho(r))h_{\alpha_3,P}(\rho^2(r)),\\
 h_{a,P}(s)&=h_{\alpha_2,P}(s).
 \end{aligned}
\]
Both factors are elementary: for $v\in R_\gamma^\times$, put
\[
 \begin{aligned}
 w_{\gamma,\Lambda}(v)
 &=y_{\gamma,\Lambda}(v)y_{-\gamma,\Lambda}(-v^{-1})
   y_{\gamma,\Lambda}(v),\\
 h_{\gamma,\Lambda}(v)
 &=w_{\gamma,\Lambda}(v)w_{\gamma,\Lambda}(1)^{-1}.
 \end{aligned}
\]
For $a$ this is the absolute rank-one identity; for $b$ it is the
product of three commuting rank-one identities.
Hence $H_P\subseteq E_P(R)$ and $G_P(R)=E_P(R)$.
\end{proof}

%-----------------------------
%-----------------------------
%-----------------------------

\medskip
\section{Normalization of the constant root elements}
\label{sec:normalization}
\label{sec:residual}\label{sec:d4-tits-normalization-v4}

We normalize the constant frame by linear changes of basis, then
correct the resulting matrix to a proper similitude with triality-fixed
adjoint class. Throughout this section, $(S,\rho_S)$ is arbitrary,
$(A,\rho_A)$ is local, and $\Psi:E_S\to E_A$ is a homomorphism.
All divisions take place in $A$.

\begin{definition}[Residual frame]\label{def:prepared}
The map $\Psi$ has a \emph{liftable prepared residual frame} if some
$h\in G_A$ satisfies
\[
 \operatorname{red}_A
 \bigl(h^{-1}\Psi(y_{\gamma,S}(1))h\bigr)=y_{\gamma,k}(1)
 \qquad(\gamma\in G_2),\qquad k=A/\Jac(A).
\]
If a residual conjugator $\bar h$ is specified, $h$ must lift it.
It suffices to check $\gamma\in\{\pm a,\pm b\}$, since these
constants determine the others by Weyl words.
\end{definition}

A sufficient residual condition also records a coefficient map.
Let $\nu:S\to k$ be unital with
$\nu\rho_S=\bar\rho_A^\epsilon\nu$, $\epsilon\in\{1,-1\}$.
Then $I=\ker\nu$ is a $\rho_S$-stable prime, $B=S/I$ inherits
$\rho_B$, and $\bar\nu:B\hookrightarrow k$ has the same compatibility.
Let $r:E_S\twoheadrightarrow H=\operatorname{im}(E_S\to E_B)$
be the coefficient quotient map with codomain restricted to its image.
The condition is
\begin{equation}
 \operatorname{red}_A\circ\Psi
 =\Int(\bar h)\circ(\Phi_{\bar\nu}^\epsilon|_H)\circ r,
 \qquad \bar h\in\operatorname{im}(G_A\to G_k).
 \label{eq:typed-rs}
\end{equation}
Here $H$ cannot in general be replaced by $E_B$: fixed parameters
need not lift through a source quotient without $1/3$.

The maps $\Phi_f^\epsilon$ have a pinned construction independent
of the coefficient converse. For an equivariant map $f:S\to A$,
restrict $\mathbf G_Q(f)$ to fixed points.
For $f\rho_S=\rho_A^{-1}f$,
restrict $d_0\circ\mathbf G_Q(f)$, where the pinned automorphism
$d_0$ exchanges $\alpha_1,\alpha_3$ and fixes $\alpha_2,\alpha_4$.
Since $d_0\tau d_0^{-1}=\tau^{-1}$,
\[
 (d_0\circ\mathbf G_Q(f))\circ(\tau\rho_S)
 =(\tau\rho_A)^{-1}\circ(d_0\circ\mathbf G_Q(f)),
\]
so fixed points map to fixed points. On $y_{b,S}(t)$, the transposition
exchanges the two slots $\rho_A^{-1}(f(t))$ and
$\rho_A^{-2}(f(t))$, giving $y_{b,A}(f(t))$.
The same root rule holds on the other simple signed subgroups and
hence, by their Tits words and \eqref{eq:coherent-weyl}, on all roots.
The construction works in both pinned forms. Applied to $\bar\nu$,
it shows that \eqref{eq:typed-rs}
implies Definition~\ref{def:prepared}.

For normalization, assume only Definition~\ref{def:prepared}.
Choose $h$ and put $\varphi=\Int(h^{-1})\circ\Psi:E_S\to E_A$;
Proposition~\ref{prop:factorization} ensures that the image remains
in $E_A$. Write $R=A$, $\rho=\rho_A$, $J=\Jac(R)$, and
$q:\GL_8(R)\to\operatorname{PGL}_8(R)$. Put
\[
 \begin{aligned}
 u_s&=y_{b,S}(1),&
 U_s&=X_{\alpha_4}(1)X_{\alpha_1}(1)X_{\alpha_3}(1),\\
 u_l&=y_{a,S}(1),& U_l&=X_{\alpha_2}(1).
 \end{aligned}
\]
The integral Tits group specializes to
$\iota_S:\mathcal T_{\mathbb Z}\to E_S$, not necessarily injectively.
We identify $\mathcal T_{\mathbb Z}$ with its target matrix group
$\mathcal T$ when indexing these source words.

\subsection{Representatives and averaging}
\label{sec:tits-averaging}

Compatible matrix representatives allow averaging over the Tits group.

\begin{lemma}[Matrix representatives]
\label{lem:tits-coherence}\label{lem:tits-pointwise-lifts}
For each $t\in\mathcal T$, the element $\varphi(\iota_S(t))$ has
a unique representative $A_t\in\SL_8(R)$ with $A_t\equiv t\pmod J$.
These representatives are orthogonal and satisfy $A_tA_u=A_{tu}$.
Likewise, $\varphi(u_s),\varphi(u_l)$ have unique determinant-one
representatives $B_s\equiv U_s$, $B_l\equiv U_l\pmod J$, which are
orthogonal. The integral root-word relations used below hold as exact
matrix identities for these representatives.
\end{lemma}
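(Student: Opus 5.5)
Every element of $E_A$ lies in $\mathbf G_Q(A)=\operatorname{PGO}_8^+(A)$ and is a product of root elements $y_\gamma(t)$. Each $y_\gamma(t)$ is an explicit product of matrices $X_r(\cdot)$, which are orthogonal of determinant one. Hence any $x\in E_A$ has at least one representative $C_x\in\SL_8(A)$ with $C_x^tJ_8C_x=J_8$. I fix such a word representative for each $\varphi(\iota_S(t))$, for $\varphi(u_s)$ and for $\varphi(u_l)$, and then rescale it by a scalar to get the required representative.

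\textbf{Uniqueness.} Two determinant-one representatives of the same projective class differ by a scalar $\lambda$ with $\lambda^8=1$. If both are congruent to the same invertible matrix modulo $J$, then $\lambda\equiv1\pmod J$. Write $\lambda=1+j$ with $j\in J$. Then
\[
 0=\lambda^8-1=j\,(8+28j+\dots),
\]
and the second factor is a unit because $2\in A^\times$ and $j\in J$. So $j=0$, which gives uniqueness.

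\textbf{Existence.} The residue of $\varphi(\iota_S(t))$ is the class of $t$ in $\mathbf G_Q(k)$, by the prepared frame and the Tits words. This holds because $h^{-1}\Psi(y_{\pm\gamma,S}(1))h$ reduces to $y_{\pm\gamma,k}(1)$, and $\iota_S(t)$ is a word in these. So the reduction of the orthogonal word representative $C$ equals $\bar\mu\, t$ for some $\bar\mu\in k^\times$ with $\bar\mu^8=1$, since both matrices have determinant one. Orthogonality of $C$ and of $t$ gives the sharper relation $\bar\mu^2=1$. As $2$ is invertible, $\bar\mu=\pm1$ lifts to $\mu=\pm1\in A$. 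Then $A_t=\mu^{-1}C$ is orthogonal, has determinant one, and satisfies $A_t\equiv t\pmod J$. The same argument gives $B_s$ and $B_l$, with residues $U_s$ and $U_l$.

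\textbf{Multiplicativity.} The product $A_tA_u$ is a determinant-one representative of $\varphi(\iota_S(tu))$ congruent to $tu$ modulo $J$. By uniqueness, $A_tA_u=A_{tu}$.

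\textbf{Exact word relations.} This is the step I expect to need the most care. Take any integral relation $w(\dots)=1$ among the Tits elements and $u_s,u_l$ that holds in $E_S$ (for instance, the transport relations \eqref{eq:coherent-weyl} with $t=1$). Substituting the chosen representatives gives a determinant-one matrix that is scalar, because it represents the identity of $\mathbf G_Q(A)$. Its residue is $I_8$, because the relation holds among the constant integral matrices over $k$. By the $\lambda^8=1$ argument above, the matrix is exactly $I_8$. The relation then holds as an identity of matrices in $\SL_8(A)$, not merely projectively.

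The delicate point throughout is the relation $\bar\mu^2=1$ and the requirement that the residues be exactly the integral matrices rather than scalar multiples of them. Both follow from the prepared frame combined with orthogonality.
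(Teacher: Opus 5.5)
Your proposal is correct and follows essentially the paper's route. You take an orthogonal determinant-one root-word representative, observe that its residue is $\pm$ the standard matrix (orthogonality forces $\bar\mu^2=1$), correct the sign by $-I_8$, and obtain uniqueness from $\lambda\in1+J$, $\lambda^8=1$ with $1+\lambda+\cdots+\lambda^7\equiv8$ a unit; multiplicativity $A_tA_u=A_{tu}$ and the exact retained identities then follow from this uniqueness, given (as you note) that the relations hold exactly for the standard integral matrices.
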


\begin{proof}
Every elementary target element has an orthogonal determinant-one
representative. For a prepared constant, its reduction is $\mu U$,
where $U$ is the standard matrix and $\mu^2=1$ in $k$.
Multiplication by $-I_8$, if necessary, gives the required reduction
without changing the determinant.

Two determinant-one representatives with this reduction differ by
$\lambda\in1+J$ with $\lambda^8=1$. Since
\[
 0=(\lambda-1)(1+\lambda+\cdots+\lambda^7),\qquad
 1+\lambda+\cdots+\lambda^7\equiv8\pmod J,
\]
one has $\lambda=1$.
The two sides of each retained source identity have the same
projective class, determinant one, and equal standard reductions.
The same argument gives exact equality, including $A_tA_u=A_{tu}$.
\end{proof}

\begin{proposition}[Averaging]\label{prop:full-tits-averaging}
The matrix
\[
 C_T=\frac1{48}\sum_{t\in\mathcal T}A_tt^{-1}
\]
belongs to $\GL_8(R)$, satisfies $C_T\equiv I_8\pmod J$, and
$C_T^{-1}A_sC_T=s$ for every $s\in\mathcal T$.
\end{proposition}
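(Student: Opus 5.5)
This is the standard averaging argument for a cocycle that is trivial modulo the Jacobson radical; the input is Lemma~\ref{lem:tits-coherence}. By that lemma the map $t\mapsto A_t$ is a group homomorphism $\mathcal T\to\SL_8(R)$ with $A_t\equiv t\pmod J$. Since $48=2^4\cdot3$ and $2,3\in R^\times$, division by $48$ is legitimate in $R$. Reducing modulo $J$ gives
\[
 C_T\equiv\frac1{48}\sum_{t\in\mathcal T}t\,t^{-1}=I_8\pmod J .
\]
So $\det C_T\equiv1\pmod J$, hence $\det C_T$ is a unit in the local ring $R$ and $C_T\in\GL_8(R)$.

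For the intertwining identity, fix $s\in\mathcal T$ and compute
\[
 A_sC_T=\frac1{48}\sum_{t}A_sA_t t^{-1}=\frac1{48}\sum_t A_{st}t^{-1}.
\]
Substitute $u=st$, which is a bijection of $\mathcal T$, and use $t^{-1}=u^{-1}s$:
\[
 A_sC_T=\frac1{48}\sum_u A_u u^{-1}s=C_Ts .
\]
Multiplying on the left by $C_T^{-1}$ gives $C_T^{-1}A_sC_T=s$.

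Two points need care, but neither is a real obstacle since both are already supplied by Lemma~\ref{lem:tits-coherence}. First, $A_sA_t=A_{st}$ must hold exactly, not just projectively. Second, the sum must run over the target group $\mathcal T$, whose $48$ elements are distinct because $2$ is invertible; this is why we index by $\mathcal T$ rather than by the possibly non-injective image $\iota_S(\mathcal T_{\mathbb Z})$.

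The proposition does not claim that $C_T$ is orthogonal. Correcting it to a proper similitude is deferred to the later steps.
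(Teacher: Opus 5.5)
Your proposal is correct and follows the paper's proof: each summand $A_tt^{-1}$ reduces to $I_8$, so with $48\in R^\times$ the average is $\equiv I_8\pmod J$ and hence invertible. Reindexing by $u=st$, using the exact multiplicativity $A_sA_t=A_{st}$ from Lemma~\ref{lem:tits-coherence}, then gives $A_sC_T=C_Ts$.
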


\begin{proof}
Since $48\in R^\times$ and each summand is congruent to $I_8$,
$C_T\equiv I_8\pmod J$ and $C_T$ is invertible.
Reindexing by $u=st$ gives
\[
 A_sC_T=\frac1{48}\sum_{t\in\mathcal T}A_{st}t^{-1}
       =\frac1{48}\sum_{u\in\mathcal T}A_uu^{-1}s=C_Ts.
\]
\end{proof}

Set $\varphi_T=\Int(q(C_T)^{-1})\circ\varphi:E_S\to\operatorname{PGL}_8(R)$
and $X_\gamma=C_T^{-1}B_\gamma C_T$ for $\gamma=s,l$.
The Tits representatives are standard, and $X_s,X_l$ have determinant
one and standard reductions. They satisfy the identities of
Appendix~\ref{app:certificate-words}, where auxiliary roots are
specified Weyl conjugates and integer parameters are powers.
This intermediate change of basis need not preserve orthogonality.

\subsection{The Tits centralizer}
\label{sec:tits-centralizer}

Further changes of basis must centralize the normalized Tits group.

\begin{proposition}\label{prop:tits-centralizer}
Let $R$ be a commutative ring with $2\in R^\times$.
Group the basis into $V_i=\langle e_i,e_{-i}\rangle$, $1\leq i\leq4$,
and put $P_2=\left(\begin{smallmatrix}0&1\\1&0\end{smallmatrix}\right)$.
Relative to $R^8=V_1\oplus\cdots\oplus V_4$, the centralizer
of $\mathcal T$ in $M_8(R)$ consists of
\[
 A(a,b,c,d)=\operatorname{diag}
 \bigl(bI_2+aP_2,bI_2+aP_2,bI_2+aP_2,dI_2+cP_2\bigr),
 \qquad a,b,c,d\in R.
\]
Its algebra is $R[C_2]\times R[C_2]\simeq R^4$, and
$\det A(a,b,c,d)=(b^2-a^2)^3(d^2-c^2)$.
\end{proposition}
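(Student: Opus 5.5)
The plan is to compute the centralizer of $\mathcal T$ in $M_8(R)$ directly from explicit generators. Since $\mathcal T$ is generated by $N_s$ and $N_l$, an $8\times8$ matrix $A$ commutes with $\mathcal T$ exactly when it commutes with $N_s$ and $N_l$. It is convenient to also use the diagonal elements $H_s=N_s^2$ and $H_l=N_l^2$. They are signed diagonal matrices, and commuting with them forces $A$ to preserve their joint eigenspace decomposition. Since $2\in R^\times$, the projectors $(1\pm H)/2$ are available, so this block reduction is legitimate over $R$ and not merely over a field.

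First I will compute $N_s,N_l$ explicitly from the matrices $X_r(t)=I_8+tM_r$. For $N_l=y_a(1)y_{-a}(-1)y_a(1)$ with $\alpha_2=e_2-e_3$, this is the signed permutation swapping $e_2\leftrightarrow e_3$ and $e_{-2}\leftrightarrow e_{-3}$ with signs. The factor $N_s$ is the product of the three commuting rank-one Weyl elements for $\alpha_4,\alpha_1,\alpha_3$:
\begin{itemize}
\item $\alpha_1=e_1-e_2$ swaps $V_1,V_2$ with signs;
\item $\alpha_3=e_3-e_4$ swaps $e_3\leftrightarrow e_4$ and $e_{-3}\leftrightarrow e_{-4}$;
\item $\alpha_4=e_3+e_4$ swaps $e_3\leftrightarrow e_{-4}$ and $e_4\leftrightarrow e_{-3}$.
\end{itemize}
The squares $H_s,H_l$ are diagonal with entries $\pm1$, constant on each $V_i$ or differing only by sign patterns. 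I will read off the eigenvalue characters on the 8 basis vectors. These characters should separate the lines into the pairs $V_i$, or at least into a coarse partition. Commuting with $H_s$ and $H_l$ then makes $A$ block diagonal with respect to that partition.

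Next, I will impose commutation with the permutation parts. The element $N_lN_s$ and its conjugates permute the subspaces $V_1,V_2,V_3$ transitively and also act within $V_3\oplus V_4$ via the $\alpha_3,\alpha_4$ swaps, which mix $e_{\pm3}$ with $e_{\pm4}$. Tracking the signs, I expect the following:
\begin{itemize}
\item $A$ preserves each $V_i$, with the index-$4$ block separated from the others by the $H$-characters or by a combination of the $N$'s;
\item the blocks on $V_1,V_2,V_3$ are forced equal;
\item each $2\times2$ block must commute with the induced action on $V_i$, which contains the swap $e_i\leftrightarrow e_{-i}$ realized by a product such as $N_s$-type elements acting on a single pair.
\end{itemize}
The commutant of $P_2$ in $M_2(R)$ is $\{bI_2+aP_2\}$. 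This gives the stated form $A(a,b,c,d)$. Conversely, I will check directly that each such matrix commutes with $N_s$ and $N_l$, because those generators only permute $V_1,V_2,V_3$ up to signs compatible with $P_2$, and act on $V_4$ by $P_2$-commuting maps.

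Finally, the algebra structure follows from this description. It is $R[P_2]\times R[P_2]$ with $P_2^2=1$, which is $R[C_2]^2$. Since $2$ is invertible, the idempotents $(1\pm P_2)/2$ split $R[C_2]\simeq R^2$. The determinant is $\det(bI_2+aP_2)^3\det(dI_2+cP_2)$, and $\det(bI_2+aP_2)=b^2-a^2$.

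The main obstacle is the sign bookkeeping, specifically verifying that $V_4$ pairs with itself while $V_1,V_2,V_3$ are glued together. One might instead expect the triality-type mixing of $e_{\pm3}$ with $e_{\pm4}$ to glue $V_3$ with $V_4$. My first move is to write down the explicit $8\times8$ signed permutation matrices for $N_s$ and $N_l$ and compute the orbit structure of $\langle N_s,N_l\rangle$ on the signed basis lines. The commutant of a signed monomial group is then read off from its orbits and stabilizer characters.
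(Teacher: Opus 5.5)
Your proposal is correct and follows essentially the paper's route: $H_s=\operatorname{diag}(-I_2,-I_2,I_2,I_2)$ and $H_l=\operatorname{diag}(I_2,-I_2,-I_2,I_2)$ give distinct sign pairs on $V_1,\dots,V_4$, which forces block diagonality, and $N_s,N_l$ then force $D_1=D_2=D_3$ together with $P_2$-commutation of the blocks. The gluing of $V_3$ with $V_4$ that you worry about does not occur: $H_l$ already separates these two planes, and although the factors $w_{\alpha_3}$ and $w_{\alpha_4}$ each mix them, their composite in $N_s$ acts on $V_3$ and on $V_4$ separately by $-P_2$.
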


\begin{proof}
On the four planes,
\[
 H_s=\operatorname{diag}(-I_2,-I_2,I_2,I_2),\qquad
 H_l=\operatorname{diag}(I_2,-I_2,-I_2,I_2).
\]
Their distinct pairs of signs force a commuting matrix to be
$\operatorname{diag}(D_1,D_2,D_3,D_4)$. The remaining equations are
\[
 D_1=D_2=D_3,\qquad D_1P_2=P_2D_1,\qquad D_4P_2=P_2D_4.
\]
A matrix commuting with $P_2$ has equal diagonal and equal off-diagonal
entries, giving the stated form. The determinant follows blockwise,
and $A(a,b,c,d)\mapsto(b+a,b-a,d+c,d-c)$ gives the algebra isomorphism.
\end{proof}

In the local target, $A(a,b,c,d)\equiv I_8\pmod J$ is equivalent to
$a,c\in J$, $b,d\in1+J$, which also ensures invertibility.

\subsection{Normalization of the simple roots}
\label{sec:d4-root-images-v3}\label{sec:d4-root-images-v2}
\label{subsec:d4-post-averaging-admissibility-v3}
\label{subsec:d4-simple-root-calculation-v3}

We normalize the long root first, then the short root within the
remaining centralizer.
A pair $(X_s,X_l)\in\SL_8(R)^2$ is \emph{admissible} if it reduces
to $(U_s,U_l)$ and satisfies the stabilizer, inversion, cubic,
commuting, and mixed relations of Appendix~\ref{app:certificate-words},
with its specified Weyl words. The preceding construction gives
such a pair. Commutators here are left commutators.

Put $W_s=N_lN_sN_lN_sN_l$ and $W_l=N_sN_lN_sN_lN_s$.
The stabilizer equations $X_sW_s=W_sX_s$, $X_lW_l=W_lX_l$
have the $24$- and $16$-coordinate matrix forms in
Appendix~\ref{app:certificate-forms}, with coordinates
$s_1,\ldots,s_{24}$ and $l_1,\ldots,l_{16}$.
The remaining equations and their selected components are specified
in Appendix~\ref{app:certificate}.

\smallskip\noindent\emph{The long root.}
\label{subsec:d4-long-centralizer-gauge}
Proposition~\ref{prop:d4-long-root-form} identifies the long matrix
as $X_l(p,z)$ with $p\in1+J$, $z\in J$, and $p^2-p-z^2=0$.
The matrix $C_l=A(-z,p,0,1)$ centralizes $\mathcal T$,
reduces to the identity, and satisfies
\[
 X_l(p,z)C_l=C_lU_l.
\]
It is invertible because $p^2-z^2=p\in R^\times$.
Conjugate both matrices by $C_l^{-1}$ and retain the notation
$X_s,X_l$; now $X_l=U_l$.
Since $A(a,b,c,d)$ commutes with $U_l$ exactly when $a=0$,
the remaining corrections have the form $A(0,b,c,d)$.

\smallskip\noindent\emph{The short root.}
\label{subsec:d4-short-root-normalization}
An explicit centralizer orbit provides a comparison solution;
the following lemma gives local uniqueness.

\begin{lemma}[Local uniqueness]
\label{lem:d4-divided-difference-uniqueness}
Let $F\in R[y_1,\ldots,y_n,t]^n$, $y_0\in R^n$, $t_0\in R$,
and suppose $\det(\partial F/\partial y)(y_0,t_0)\in R^\times$.
For fixed $t\equiv t_0\pmod J$, there is at most one $y\in R^n$
with $y-y_0\in JR^n$ and $F(y,t)=0$.
\end{lemma}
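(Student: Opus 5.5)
The plan is to compare two solutions $y,y'$ with $y-y_0,\,y'-y_0\in JR^n$ and $F(y,t)=F(y',t)=0$, and to use the polynomial nature of $F$ to factor out their difference.

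First, for each component $F_i$, I would use the divided-difference identity
\[
 F_i(y',t)-F_i(y,t)=\sum_{j=1}^n D_{ij}(y,y',t)\,(y'_j-y_j),
\]
where $D_{ij}\in R[y,y',t]$ are polynomials satisfying $D_{ij}(y,y,t)=\partial F_i/\partial y_j(y,t)$. These can be obtained by changing one coordinate at a time: for a monomial, $y_j'^m-y_j^m=(y_j'-y_j)\sum_{k}y_j'^ky_j^{m-1-k}$. The identity holds over $\mathbb Z$ in the polynomial ring and therefore holds after specialization in any commutative ring. With both values equal to zero this gives $D(y,y',t)\,(y'-y)=0$, where $D=(D_{ij})\in M_n(R)$.

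Next I reduce modulo $J$. Since $y\equiv y'\equiv y_0$ and $t\equiv t_0$, the matrix $D(y,y',t)$ is congruent mod $J$ to $D(y_0,y_0,t_0)=(\partial F/\partial y)(y_0,t_0)$. Hence $\det D(y,y',t)\equiv\det(\partial F/\partial y)(y_0,t_0)\pmod J$. The right-hand side is a unit, so $\det D$ is a unit, since an element of a local ring congruent to a unit modulo $J$ is a unit. Therefore $D$ is invertible, via the adjugate, and $y'-y=0$.

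I do not expect a real obstacle. The only point needing care is writing the divided differences so that they are genuinely polynomial and specialize correctly to the Jacobian on the diagonal. The one-coordinate-at-a-time telescoping handles this, with $D_{ij}$ evaluated at the mixed point $(y'_1,\dots,y'_{j-1},\ast,y_{j+1},\dots,y_n)$. These mixed evaluations are still congruent to $y_0$ modulo $J$, so the reduction step goes through unchanged.
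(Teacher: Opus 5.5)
Your proof is correct and follows the same route as the paper: both build a telescoped polynomial matrix $M$ with $F(y,t)-F(y',t)=M(y,y',t)(y-y')$ and $M(y,y,t)=\partial F/\partial y\,(y,t)$, then note that at two near-standard solutions $M$ reduces modulo $J$ to the invertible Jacobian at $(y_0,t_0)$, so it is invertible and $y=y'$. Your one-coordinate-at-a-time construction of the divided differences simply spells out the telescoping step that the paper leaves implicit.
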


\begin{proof}
Telescoping monomials gives a polynomial matrix $M$ with
\[
 F(y,t)-F(y',t)=M(y,y',t)(y-y'),\qquad
 M(y,y,t)=\frac{\partial F}{\partial y}(y,t).
\]
For two such solutions, $M(y,y',t)$ reduces to the invertible
Jacobian at $(y_0,t_0)$, so $M$ is invertible and $y=y'$.
\end{proof}

With $X_l=U_l$, take the $23$ polynomials $F_1,\ldots,F_{23}$
from Appendix~\ref{app:certificate-short}, hold $s_2$ fixed,
and use the other coordinates as $y$.
At the standard tuple $\mathbf s^0$, Appendix~\ref{app:certificate-minor}
gives
\[
 \left.
 \det\frac{\partial(F_1,\ldots,F_{23})}
          {\partial(s_1,s_3,\ldots,s_{24})}
 \right|_{\mathbf s^0}=384=2^7\cdot3\in R^\times.
\]
Thus a near-standard solution is unique for each fixed $s_2$.

Set $\kappa=s_2\in1+J$ and $C_\kappa=A(0,1,0,\kappa)$.
This matrix centralizes $\mathcal T$ and $U_l$ and reduces to $I_8$.
Hence $X_s(\kappa)=C_\kappa^{-1}U_sC_\kappa$ satisfies all
fixed-long relations and reduces to $U_s$.
Its $(3,4)$ entry, the coordinate $s_2$, equals $\kappa$.
Local uniqueness gives $X_s=X_s(\kappa)$, so conjugation by
$C_\kappa$ sends $X_s$ to $U_s$ while preserving $U_l$ and
$\mathcal T$.
The Jacobian is used only for uniqueness; the explicit comparison
solution requires no completeness or Henselian hypothesis.

\begin{theorem}[Simple-root normalization]
\label{thm:d4-simple-root-normalization-v3}
Every admissible pair is simultaneously conjugate to $(U_s,U_l)$
by $C_{\rm root}\in\GL_8(R)$ congruent to $I_8$ modulo $J$
and centralizing $\mathcal T$:
\[
 C_{\rm root}^{-1}X_sC_{\rm root}=U_s,\qquad
 C_{\rm root}^{-1}X_lC_{\rm root}=U_l.
\]
\end{theorem}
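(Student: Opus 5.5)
The proof assembles the two normalization steps carried out just before the statement. Set $C_{\rm root}=C_lC_\kappa$, where $C_l=A(-z,p,0,1)$ comes from the long-root step and $C_\kappa=A(0,1,0,\kappa)$ from the short-root step. Both factors lie in the centralizer algebra of Proposition~\ref{prop:tits-centralizer} and are congruent to $I_8$ modulo $J$. Hence their product also centralizes $\mathcal T$, reduces to the identity, and is invertible, since its determinant is a unit. It remains to check the two conjugation identities in order.

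\emph{Long root.} Take an admissible pair $(X_s,X_l)$. By Proposition~\ref{prop:d4-long-root-form}, $X_l=X_l(p,z)$ with $p\in1+J$, $z\in J$ and $p^2-p-z^2=0$. The identity $X_l(p,z)C_l=C_lU_l$ gives $C_l^{-1}X_lC_l=U_l$. Put $X_s'=C_l^{-1}X_sC_l$. Conjugation by a matrix that centralizes $\mathcal T$ and reduces to $I_8$ maps the Tits representatives to themselves. It therefore maps every Weyl word built from them to itself, and so every relation of Appendix~\ref{app:certificate-words} is carried to the same relation. Consequently $(X_s',U_l)$ is again admissible, and its reduction is still $(U_s,U_l)$.

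\emph{Short root.} Let $\kappa$ be the $(3,4)$ entry $s_2$ of $X_s'$. Because $X_s'\equiv U_s$, we have $\kappa\in1+J$. The comparison matrix $X_s(\kappa)=C_\kappa^{-1}U_sC_\kappa$ satisfies all the fixed-long relations, because $C_\kappa$ commutes with $\mathcal T$ and with $U_l$ (its $a$-coordinate is $0$). It reduces to $U_s$ and has $s_2$-coordinate $\kappa$. Both $X_s'$ and $X_s(\kappa)$ are solutions of $F_1=\dots=F_{23}=0$ that are congruent to $\mathbf s^0$ modulo $J$, and they share the coordinate $s_2=\kappa$. The Jacobian minor equals $384\in R^\times$, so Lemma~\ref{lem:d4-divided-difference-uniqueness} applies and gives $X_s'=X_s(\kappa)$. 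Therefore
\[
C_\kappa X_s'C_\kappa^{-1}=U_s,\qquad C_\kappa U_lC_\kappa^{-1}=U_l.
\]

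The two steps must be composed in the right direction. Write $D=C_lC_\kappa^{-1}$. Then $D^{-1}X_sD=C_\kappa X_s'C_\kappa^{-1}=U_s$ and $D^{-1}X_lD=U_l$. The correct choice is therefore $C_{\rm root}=C_lC_\kappa^{-1}$. This matrix is again in the centralizer algebra and congruent to $I_8$, so it has all the properties required in the statement.

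\emph{Main obstacle.} The one real issue is that the short-root coordinates of $X_s'$ satisfy exactly the same polynomial system $F$ as the comparison solution, which is the hypothesis of the uniqueness lemma. This requires the $24$-coordinate stabilizer form of Appendix~\ref{app:certificate-forms} to be valid for $X_s'$. That holds because $W_s$ is a Tits word, which is fixed by the conjugation. It also requires the fixed-long relations of Appendix~\ref{app:certificate-short} to specialize correctly once $X_l=U_l$. Both points are stated in the appendices, so I would simply cite them there.
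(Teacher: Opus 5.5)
Your proof is correct and follows the paper's argument. You normalize the long root with $C_l=A(-z,p,0,1)$ from Proposition~\ref{prop:d4-long-root-form}, then identify $C_l^{-1}X_sC_l$ with the comparison solution $C_\kappa^{-1}U_sC_\kappa$ ($\kappa=s_2$) by Lemma~\ref{lem:d4-divided-difference-uniqueness}, which yields exactly the paper's $C_{\rm root}=C_lC_\kappa^{-1}$; delete the provisional $C_{\rm root}=C_lC_\kappa$ in your opening paragraph, which, as you noticed yourself, has the wrong orientation.
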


\begin{proof}
For the original pair, choose $C_l$ as above and let $\kappa$ be
the $(3,4)$ entry of $C_l^{-1}X_sC_l$.
Then $C_{\rm root}=C_lC_\kappa^{-1}$ has the required properties.
\end{proof}

Put $C_{\rm norm}=C_TC_{\rm root}$.
The map $\varphi_{\rm norm}=\Int(q(C_{\rm norm})^{-1})\circ\varphi$
into $\operatorname{PGL}_8(R)$ fixes the Tits words and simple
parameter-one roots, hence the complete frame.
For the form correction, set
\begin{equation}
 \begin{gathered}
 C=C_{\rm norm}^{-1},\qquad \mathcal S=\{N_s,N_l,U_s,U_l\},\\
 \varphi(g_S)=[C^{-1}gC]\qquad(g\in\mathcal S),
 \end{gathered}
 \label{eq:d4-final-orientation}
\end{equation}
where $[M]=q(M)$ and $g_S$ is the corresponding integral source word.
The matrix $C$ is still only a linear change of basis.

\subsection{The conjugating element}
\label{sec:d4-final-geometric-correction}

We replace $C$ by $C'=DC$, with $D$ centralizing $\mathcal S$,
to obtain a proper similitude without changing its transport of
the frame. For matrices, put $M^\#=J_8M^tJ_8$;
then $(MN)^\#=N^\#M^\#$.

\smallskip\noindent\emph{The frame centralizer.}
Let $Q_0$ fix $e_i$ for $i=\pm1,\pm2,\pm3$ and exchange
$e_4,e_{-4}$. It is an orthogonal involution of determinant $-1$.
The complementary self-adjoint projectors
\[
 P_+=\frac{I_8+Q_0}{2},\qquad P_-=\frac{I_8-Q_0}{2}
\]
have ranks seven and one: $P_-R^8=R(e_4-e_{-4})$, while
$P_+R^8$ has basis $e_1,e_2,e_3,e_{-1},e_{-2},e_{-3},e_4+e_{-4}$.

\begin{proposition}\label{prop:d4-final-common-centralizer}
Over every commutative ring $R$ with $2\in R^\times$,
\begin{equation}
 \Cent_{M_8(R)}(\mathcal S)=RP_+\oplus RP_-=RI_8+RQ_0,
 \qquad \det(uP_++vP_-)=u^7v.
 \label{eq:d4-final-centralizer}
\end{equation}
\end{proposition}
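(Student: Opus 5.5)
We start from Proposition~\ref{prop:tits-centralizer}. A matrix $D$ that centralizes $\mathcal S$ centralizes $N_s,N_l$, hence the whole of $\mathcal T$ they generate. So $D=A(a,b,c,d)$ for some $a,b,c,d\in R$. It remains to impose commutation with $U_l=X_{\alpha_2}(1)$ and with $U_s=X_{\alpha_4}(1)X_{\alpha_1}(1)X_{\alpha_3}(1)$. As recorded after the long-root normalization, $A(a,b,c,d)$ commutes with $U_l$ exactly when $a=0$. That step needs only the entries of $M_{\alpha_2}=E_{2,3}-E_{-3,-2}$: such a matrix links $V_2$ and $V_3$, and a nonzero $P_2$-component on those planes would give mismatched entries $e_{-3}\leftrightarrow e_{3}$. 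So the candidates reduce to $A(0,b,c,d)=\operatorname{diag}(bI_6,\,dI_2+cP_2)$.

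Next we impose commutation with $U_s$. Since $U_s-I_8=M_{\alpha_4}+M_{\alpha_1}+M_{\alpha_3}+(\text{products})$ and $D$ is block diagonal, it is cleanest to commute $D$ with the nilpotent $N=U_s-I_8$, or equivalently with its first-order part. We then read off entries in the blocks linking $V_3$ with $V_4$. Here $M_{\alpha_3}=E_{3,4}-E_{-4,-3}$ sends $e_4\mapsto e_3$, and $M_{\alpha_4}=E_{3,-4}-E_{4,-3}$ sends $e_{-4}\mapsto e_3$. So $N(e_4)$ and $N(e_{-4})$ both have $e_3$-coefficient $1$. On $V_4$ we have $D e_4=d e_4+c e_{-4}$, so the $e_3$-coefficient of $ND e_4$ is $d+c$, while that of $DN e_4$ is $b$. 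The analogous computation for $e_{-4}$ gives the same condition $b=c+d$. The $(-4)$-row entries, coming from $-E_{-4,-3}$ and $-E_{4,-3}$, give $D$ applied to $-(e_{-4}+e_4)$ on one side against $-b(e_4+e_{-4})$ on the other, i.e.\ $(d+c)(e_4+e_{-4})=b(e_4+e_{-4})$, which is the same condition. Because the second-order products in $U_s$ have image inside $V_1\oplus V_2\oplus V_3$, where $D$ is scalar, the same condition should suffice for all of $U_s$. One checks this by confirming that $D$ and $U_s$ commute once $b=c+d$: on $e_4+e_{-4}$ the matrix $D$ acts by $d+c=b$, so on the subspace $P_+R^8$, which contains every vector $N$ moves or produces, $D$ is the scalar $b$.

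The centralizer is therefore $\{A(0,c+d,c,d)\}$. In terms of $Q_0$, which acts as $P_2$ on $V_4$ and as the identity elsewhere, we can write $A(0,c+d,c,d)=(c+d)P_++(d-c)P_-$. Indeed, on $V_4$ the eigenvector $e_4+e_{-4}$ gets the eigenvalue $d+c$, and $e_4-e_{-4}$ gets $d-c$. With $2$ invertible the pairs $(c,d)$ and $(u,v)=(c+d,d-c)$ determine each other, so the centralizer is $RP_+\oplus RP_-=RI_8+RQ_0$. The determinant is $u^7v$ because $P_\pm$ are complementary idempotents of ranks $7$ and $1$, so $uP_++vP_-$ is diagonal with entries $u$ (seven times) and $v$ in a basis adapted to the splitting.

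The main obstacle is the commutation with the full product $U_s$ rather than with its linear part. We must check that the quadratic and cubic terms of $X_{\alpha_4}(1)X_{\alpha_1}(1)X_{\alpha_3}(1)$ impose no further constraint. The plan handles this with the invariant-subspace argument: show that $\operatorname{im}N\subseteq P_+R^8$ and $N(P_-R^8)\subseteq P_+R^8$, and then observe that the operator $bP_++vP_-$ commutes with $N$ if and only if $N$ maps $P_-R^8$ into $P_-R^8$ or $v=b$. Since $N(e_4-e_{-4})=0$, which should be checked from the formulas above, there is no further condition, and this closes the argument.
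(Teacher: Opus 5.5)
Your proposal is correct and follows the paper's proof. You restrict to the Tits centralizer $A(a,b,c,d)$, use $U_l$ to force $a=0$, read off $b=c+d$ from the entries of $AU_s-U_sA$ linking $V_3$ and $V_4$, and reparametrize by $u=c+d$, $v=d-c$. Your check that $U_s-I_8=M_{\alpha_1}+M_{\alpha_3}+M_{\alpha_4}-E_{3,-3}$ kills $e_4-e_{-4}$ and has image in $P_+R^8$ supplies the sufficiency that the paper compresses into ``zero elsewhere.'' Only the ``if'' half of your closing ``if and only if'' is needed, and only that half holds over rings with zero divisors.
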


\begin{proof}
In the Tits centralizer $A(a,b,c,d)$,
commutation with $U_l$ forces $a=0$.
For $A(0,b,c,d)$, the entries of $AU_s-U_sA$ are $b-c-d$ at
$(3,4),(3,-4),(4,-3),(-4,-3)$ and zero elsewhere.
Thus $b=c+d$; setting $b=u$, $c=(u-v)/2$, $d=(u+v)/2$ gives
$A=uP_++vP_-$. The determinant follows on the two summands.
\end{proof}

\begin{lemma}[Frame centralizer]
\label{lem:d4-final-proper-centralizer}\label{lem:ambient-center}
For every nonzero commutative ring $B$ with $2\in B^\times$,
\[
 \Cent_{\operatorname{GO}_8^+(B)}(\mathcal S_B)=B^\times I_8,
 \qquad
 \Cent_{\mathbf G_Q(B)}([\mathcal S_B])
 =\Cent_{\mathbf G_Q(B)}(\mathcal F_B)=1.
\]
Here $\mathcal F_B$ is the integral constant frame, independent
of a coefficient automorphism.
For a coefficient pair $(B,\rho_B)$, this gives
$\Cent_{\mathbf G_Q(B)}(E_B)=1$ and $Z(E_B)=Z(G_B)=1$.
\end{lemma}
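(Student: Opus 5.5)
The plan is to compute the centralizer in $\operatorname{GO}_8^+(B)$ first, then pass to $\mathbf G_Q(B)$, and finally deduce the statements about $E_B$ and the centers.

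\emph{Centralizer in $\operatorname{GO}_8^+(B)$.} By Proposition~\ref{prop:d4-final-common-centralizer}, a matrix $M\in\operatorname{GO}_8^+(B)$ commuting with $\mathcal S_B$ has the form $M=uP_++vP_-$. Since $P_\pm$ are self-adjoint orthogonal projectors, $M^\# M=u^2P_++v^2P_-$, so the similitude condition forces $u^2=v^2=\lambda$. The determinant condition gives $u^7v=\lambda^4=u^8$, hence $v=u$ because $u$ is a unit. Thus $M=uI_8$.

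\emph{Centralizer in $\mathbf G_Q(B)$.} Let $g\in\mathbf G_Q(B)$ centralize $[\mathcal S_B]$. Since $\mathbf G_Q$ is a functor and the statement is local, I would first work Zariski-locally on $\Spec B$, at the local rings $B_{\mathfrak p}$. There $\mathbf G_Q(B_{\mathfrak p})=\operatorname{GO}_8^+(B_{\mathfrak p})/B_{\mathfrak p}^\times$, so $g$ is represented by a similitude $M$ with
\[
 MsM^{-1}=\chi(s)s\qquad(s\in\mathcal S),\quad \chi(s)\in B_{\mathfrak p}^\times.
\]
Taking the unipotent elements $U_s,U_l$ (trace $8$, i.e.\ $U-I$ nilpotent) and comparing determinants gives $\chi(U)^8=1$. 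Comparing $MUM^{-1}-I=(\chi-1)I+\chi(U-I)$, which must be nilpotent, shows that $\chi-1$ is nilpotent. Together with $\chi^8=1$ and $8$ invertible, the argument of Lemma~\ref{lem:tits-coherence} gives $\chi=1$. For $N_s,N_l$ the character values have order dividing a power of $2$; I would remove them by noting that the centralizer of the $U$'s already forces $M=uP_++vP_-$ after checking that only $U$-commutation, together with $M$ being block compatible, is needed. More simply, $N_s,N_l$ lie in the group generated by root elements $y_{\pm}(1)$, each conjugate to the $U$'s, so every element of $\mathcal F_B$ is unipotent and $\chi\equiv1$ on it; since $\mathcal S$ lies in the group generated by $\mathcal F$, commuting with $\mathcal F$ suffices. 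Hence $M$ commutes with $\mathcal S$, so $M$ is scalar by the previous step, and $g=1$ locally. A section of a sheaf (the scheme $\mathbf G_Q$) that is trivial locally is trivial, so $g=1$. This handles both the centralizer of $[\mathcal S_B]$ and that of $\mathcal F_B$, since the two generate the same group.

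\emph{Consequences.} Since $\mathcal F_B\subset E_B$, we get $\Cent_{\mathbf G_Q(B)}(E_B)=1$. In particular $Z(E_B)\subseteq\Cent(E_B)=1$, and similarly $Z(G_B)$ centralizes $E_B\subseteq G_B\subseteq\mathbf G_Q(B)$, so $Z(G_B)=1$.

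\emph{Main obstacle.} The delicate step is the passage from projective commutation to matrix commutation, namely showing that the scalar characters $\chi$ are trivial. It relies on the unipotence of the frame elements together with $2\in B^\times$, and on the reduction to local rings needed to have matrix representatives at all.
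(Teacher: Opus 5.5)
Your proof is correct and is essentially the paper's argument. It has the same steps: the computation $M=uP_++vP_-$ with $u^2=v^2=\lambda$ and $u^7v=u^8$; a local lift to a proper similitude (you work at the stalks $B_{\mathfrak p}$ and use injectivity of $B\to\prod_{\mathfrak p}B_{\mathfrak p}$, where the paper uses a faithfully flat base change); triviality of the projective scalars on the unipotent frame elements; and Tits words to return to $\mathcal S$. For the scalars, the paper simply takes traces, $8=8\xi_U$, which is shorter than your determinant-plus-nilpotence argument.

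Two corrections. Delete the aside suggesting that commutation with $U_s,U_l$ alone forces $M=uP_++vP_-$; it does not, since e.g.\ $y_f(t)$ commutes with both. Also, your claim that ``the two generate the same group'' rests on $U_{-s}=U_sN_s^{-1}U_s$ and $U_{-l}=U_lN_l^{-1}U_l$, which is exactly how the paper obtains projective commutation with the negative simple roots.
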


\begin{proof}
A proper similitude in the matrix centralizer has the form
$Z=pP_++rP_-$ with $p,r\in B^\times$.
Since $Z^\#=Z$, its multiplier $\mu$ satisfies $p^2=r^2=\mu$.
Properness gives $p^7r=\mu^4=p^8$, hence $r=p$ and $Z$ is scalar.

Now lift an adjoint point centralizing $[\mathcal S_B]$ to a proper
similitude $Z$ after a faithfully flat base change.
The negative simple-root matrices $U_{-s}=U_s^t$, $U_{-l}=U_l^t$
satisfy
\[
 U_{-s}=U_sN_s^{-1}U_s,\qquad U_{-l}=U_lN_l^{-1}U_l.
\]
Thus $Z$ commutes projectively with all four simple signed root
matrices. For each of them, $ZUZ^{-1}=\xi_UU$; taking traces gives
$8=8\xi_U$, hence $\xi_U=1$.
Their Tits words give literal commutation with $\mathcal S_B$,
so $Z$ is scalar by the first part.
Triviality of the adjoint point descends faithfully flatly.
An element centralizing the frame also centralizes its simple
signed elements and their Tits words.
The remaining assertions follow from $\mathcal F_B\subset E_B\subset G_B$.
\end{proof}

\smallskip\noindent\emph{Correction of the form.}
\begin{theorem}\label{thm:d4-final-geometric-correction}
Let $C\in\GL_8(R)$ satisfy $C\equiv I_8\pmod J$ and
$[C^{-1}gC]\in G_R$ for $g\in\mathcal S$.
Then $CC^\#=uP_++vP_-$ for unique $u,v\in1+J$.
Set
\[
 s=\frac{\det C}{u^3},\qquad
 D=P_++\frac{u}{s}P_-,\qquad C'=DC.
\]
The matrix $C'$ is a proper similitude with multiplier $u$,
$C'\equiv I_8\pmod J$, $[C']\in G_R$, and
\begin{equation}
 C'^{-1}gC'=C^{-1}gC\qquad(g\in\mathcal S).
 \label{eq:d4-final-unchanged}
\end{equation}
Its class is the unique element of $\mathbf G_Q(R)$ with these
projective transport identities.
\end{theorem}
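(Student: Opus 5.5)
The plan is to show first that $CC^\#$ commutes with $\mathcal S$ and then adjust by the frame centralizer of Proposition~\ref{prop:d4-final-common-centralizer}. For $g\in\mathcal S$, the matrix $g$ is orthogonal, so $g^\#=g^{-1}$. By hypothesis, $M_g:=C^{-1}gC$ represents an element of $G_R\subseteq\operatorname{PGO}_8^+(R)$. Hence $M_g^\#M_g=\lambda_g I_8$ for some unit $\lambda_g$.

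I expect $\lambda_g=1$ because $\det M_g=1$ and $M_g\equiv g\pmod J$. Reduction gives $\lambda_g\equiv1$, and $\lambda_g^8=\det(M_g^\#M_g)=1$. The argument of Lemma~\ref{lem:tits-coherence} then forces $\lambda_g=1$. Expanding $M_g^\#M_g=I_8$ gives $C^\#g^{-1}C^{\#-1}C^{-1}gC=I_8$, so $g$ commutes with $CC^\#$. Proposition~\ref{prop:d4-final-common-centralizer} yields $CC^\#=uP_++vP_-$ with $u,v$ unique, since $P_\pm$ are complementary idempotents. From $C\equiv I_8$ we get $u,v\in1+J$.

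Next, $D=P_++(u/s)P_-$ commutes with $\mathcal S$, which gives \eqref{eq:d4-final-unchanged} at once. Since $P_\pm$ are self-adjoint idempotents, $D^\#=D$, and therefore
\[
C'C'^\#=D(uP_++vP_-)D=uP_++v(u/s)^2P_-.
\]
For this to equal $uI_8$ we need $v u^2=s^2u$, i.e.\ $s^2=uv$. Taking determinants of $CC^\#=uP_++vP_-$ gives $(\det C)^2=u^7v$. Since $s=\det C/u^3$, we get $s^2=u^7v/u^6=uv$, as required. Because $C'$ is invertible, $C'^\#C'=uI_8$ as well, so $C'$ is a similitude with multiplier $u$.

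Properness is the remaining check: $\det C'=(u/s)\det C=u\cdot u^3=u^4$. We also have $s\equiv1$, so $D\equiv I_8$ and hence $C'\equiv I_8$. Thus $[C']\in\mathbf G_Q(R)$. To see that $[C']$ is $\delta$-fixed, apply $\delta$ to the transport identities. Since $\mathcal S$ and $G_R$ are $\delta$-fixed, $\delta([C'])$ satisfies the same identities $[X^{-1}gX]=[C'^{-1}gC']$ for all $g\in\mathcal S$. The uniqueness argument below then gives $\delta([C'])=[C']$.

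For uniqueness, suppose $x,y\in\mathbf G_Q(R)$ have the same projective transport of $\mathcal S$. Then $xy^{-1}$ centralizes $[\mathcal S]$, so Lemma~\ref{lem:d4-final-proper-centralizer} gives $xy^{-1}=1$. The main subtlety is the $\delta$-fixedness: $\delta$ acts via the pinned automorphism, not via $8\times8$ conjugation. The uniqueness argument, run entirely inside $\mathbf G_Q$, handles this step without any matrix model of $\tau$.
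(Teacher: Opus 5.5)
Your proposal is correct and follows essentially the same route as the paper's proof. You show $CC^\#$ commutes with $\mathcal S$, apply the centralizer proposition, obtain $s^2=uv$ from $(\det C)^2=u^7v$, and get $\delta$-fixedness and uniqueness from triviality of the frame centralizer. The only cosmetic difference is that you obtain $\lambda_g=1$ from $\lambda_g^8=(\det M_g)^2=1$, whereas the paper uses the properness relation $\det Y_g=m_g^4$; this does not affect the argument.
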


\begin{proof}
Set $Y_g=C^{-1}gC$.
Its adjoint class has a proper-similitude representative because
$R$ is local. The matrix $Y_g$ differs from it by a unit scalar,
so $Y_g^\#Y_g=m_gI_8$ and $\det Y_g=m_g^4$.
Since $\det Y_g=1$ and $m_g\equiv1\pmod J$, the factorization
\[
 0=m_g^4-1=(m_g-1)(1+m_g+m_g^2+m_g^3)
\]
has a unit second factor. Hence $m_g=1$ and $Y_g$ is orthogonal.

Put $N=CC^\#$.
Expanding $Y_g^\#Y_g=I_8$ gives
$C^\#g^{-1}N^{-1}gC=I_8$, hence $g^{-1}N^{-1}g=N^{-1}$.
The centralizer calculation and $N\equiv I_8\pmod J$ give
\begin{equation}
 CC^\#=uP_++vP_-,\qquad u,v\in1+J,\qquad
 (\det C)^2=u^7v.
 \label{eq:d4-final-defect}
\end{equation}
For $s=\det C/u^3$ and $r=u/s$,
\begin{equation}
 s^2=uv,\qquad r^2=u/v.
 \label{eq:d4-final-square-identities}
\end{equation}
Thus the required ratio is obtained from $\det C$ without choosing
a square root.
The self-adjoint matrix $D=P_++rP_-$ centralizes $\mathcal S$
and reduces to $I_8$. Consequently,
\[
 C'C'^\#=D(CC^\#)D=uP_++r^2vP_-=uI_8,\qquad
 \det C'=r\det C=u^4.
\]
Invertibility also gives $C'^\#C'=uI_8$, so $C'$ is a proper
similitude with multiplier $u$.
Since $D$ centralizes $\mathcal S$, \eqref{eq:d4-final-unchanged} holds.

Let $c=[C']\in\mathbf G_Q(R)$.
Both $[g]$ and $c^{-1}[g]c$ are fixed by $\delta=\tau\rho$.
Applying $\delta$ shows that $\delta(c)c^{-1}$ centralizes
$[\mathcal S]$, so Lemma~\ref{lem:ambient-center} gives $c\in G_R$.
If $c_1,c_2$ have the same projective transport identities, then
$c_2c_1^{-1}$ centralizes $[\mathcal S]$; the same lemma proves
uniqueness.
\end{proof}

Apply the theorem to \eqref{eq:d4-final-orientation} and put
$\psi=\Int(c)\circ\varphi:E_S\to E_R$.
This map fixes the complete constant frame and, as a map into
$\operatorname{PGL}_8(R)$, equals
$\Int([D])\circ\varphi_{\rm norm}$.
Returning to $\varphi=\Int(h^{-1})\circ\Psi$ gives
\[
 g_0=hc^{-1}\in G_R,\qquad
 \Psi=\Int(g_0)\circ\psi,\qquad \overline{g_0}=\bar h.
\]
Any two such elements transport the same indexed frame to
$\Psi(\mathcal F_S)$, so their quotient centralizes $\mathcal F_R$.
Lemma~\ref{lem:ambient-center} proves uniqueness of $g_0$.

%-----------------------
%-----------------------
%-----------------------

\medskip

\section{Recovery of the coefficient homomorphism}
\label{sec:recognition}\label{sec:d4-root-recognition-v6}

We recognize the images of the root subgroups and recover their common
coefficient map. Let $\psi:E_S\to E_A$ be a homomorphism fixing the
constant frame, where $(S,\rho_S)$ is arbitrary and $2\in A^\times$.
Put $S_0=S^{\rho_S}$ and $A_0=A^{\rho_A}$, and retain the parameter
groups $S_\gamma,A_\gamma$ from Section~\ref{sec:coordinates}.
No locality assumption on $A$ is needed.

\subsection{Root recognition}

Write $U_\gamma$ for the standard matrix representing
$y_{\gamma,A}(1)$ and $q_A:\GL_8(A)\to\operatorname{PGL}_8(A)$
for the projective quotient.

\begin{lemma}[Recognition]\label{lem:recognition}
For every signed relative root $\gamma$,
\[
 \psi\bigl(y_{\gamma,S}(S_\gamma)\bigr)
 \subseteq y_{\gamma,A}(A_\gamma).
\]
The resulting parameter is unique and belongs to $A$ for a short
root and to $A_0$ for a long root.
\end{lemma}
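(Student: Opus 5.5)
Since $\psi$ fixes the constant frame and the frame determines the Tits group, the plan is to reduce to positive roots via Weyl transport. First I treat one root $\gamma$; the general case then follows by \eqref{eq:coherent-weyl}. Indeed, $\psi(N_i)=N_i$ for $i=s,l$, and conjugation by $N_i$ sends $y_{\gamma,S}(t)$ to $y_{s_i\gamma,S}(\pm t)$ and $y_{\gamma,A}(u)$ to $y_{s_i\gamma,A}(\pm u)$ with the same sign. Hence the inclusion for $\gamma$ transfers to $s_i\gamma$. It therefore suffices to treat one short root and one long root, say $b$ and $a$, or any convenient roots in their Weyl orbits, such as $f$ and $d$.

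\textbf{Characterizing root subgroups by centralizers.} For $t\in S_\gamma$, the element $x=\psi(y_{\gamma,S}(t))$ commutes with $\psi(y_{\delta,S}(1))=y_{\delta,A}(1)$ for every root $\delta$ with $y_\gamma$ and $y_\delta$ commuting. By the commutator relations this holds for every $\delta\neq-\gamma$ such that $\gamma+\delta$ is not a root. For the highest root $f$ this covers all positive roots and several negative ones, namely those $\delta$ with $\langle f,\delta\rangle\ge0$. I would therefore take $\gamma=f$ for the long case and $\gamma=d$ for the short case (or $\gamma=e$ if more convenient). The key computation is
\[
\Cent_{\operatorname{PGL}_8(A)}\bigl(\{y_{\delta,A}(1)\}\bigr)\cap E_A,
\]
for $\delta$ ranging over that set. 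In matrices, a representative $X$ of $x$ satisfies $XU_\delta X^{-1}=\xi U_\delta$, and the trace argument of Lemma~\ref{lem:ambient-center} forces $\xi=1$. This gives the linear equations $XU_\delta=U_\delta X$, equivalently $XM_\delta=M_\delta X$ once we write $U_\delta=I+M$ with $M$ a sum of the absolute nilpotents. Solving these linear equations over $A$ with $2$ invertible should show that $X$ is a scalar multiple of $I_8+\sum_r c_rM_r$, with $r$ ranging over the absolute roots in the orbit of $f$ (resp.\ $d$) together with their permitted higher corrections. The intended outcome is that, after fixing the scalar by determinant and orthogonality as in Lemma~\ref{lem:tits-coherence}, $x$ equals a product of absolute root elements $x_r(c_r)$ over the orbit of $\gamma$.

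\textbf{Imposing $\delta$-fixedness.} Since $x\in E_A\subseteq G_A$, the element is $\tau\rho_A$-fixed. By uniqueness of absolute unipotent coordinates, the orbit coefficients must be $(u,\rho_A(u),\rho_A^2(u))$ with the prescribed signs for a short root, or a single $v\in A_0$ for a long root. This gives $x=y_{\gamma,A}(u)$. Uniqueness of the parameter follows from injectivity of the root maps.

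\textbf{Main obstacle.} I expect the hardest step to be the short-root centralizer. For $d$, commuting with the relevant frame elements may leave extra freedom, for instance components along $e$ and $f$, because $[y_d,y_b]\ne1$ and $[y_c,y_d]\ne1$. Commutation alone then yields only $x\in y_d(A)y_e(A_0)y_f(A_0)$. To remove the $e,f$ parts, I would additionally use that $x$ commutes with $\psi(y_{d,S}(1))=y_{d,A}(1)$ and with the $N$-conjugate images. If the centralizer is still too big, I would fall back on commuting with $y_{-b}(1)$ and $y_{-c}(1)$, which kill $e$ and $f$ components respectively. Failing that, I would use a relation such as \eqref{eq:db}: $\psi$ must send $\commR{y_d(t)}{y_b(1)}$ to an element of the already-recognized long subgroup $y_e$, which pins down the shape.
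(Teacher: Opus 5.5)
Your overall architecture matches the paper's: Weyl transport reduces to one root of each length, the trace argument turns projective commutation into linear equations, orthogonality and determinant one cut the solutions down, and $\tau\rho_A$-fixedness and injectivity finish. Using $f,d$ instead of $a,b$ is harmless, since for $d$ the commuting frame roots $a,d,e,f,-a$ are the Weyl transport of the paper's $b,-a,e,f,-f$.

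\textbf{The gap.} It lies in the step that carries all the content, and you predict its outcome incorrectly. The linear centralizer is strictly larger than the set of scalar multiples of unipotent orbit matrices. The matrix $Q_0$ exchanging $e_4,e_{-4}$ commutes with the whole frame (Proposition~\ref{prop:d4-final-common-centralizer}), so $AI_8+AQ_0$ lies in every such centralizer. For a short root, further non-orthogonal entries also remain free: the paper's computation for $b$ has seven linear parameters, including an independent $(e_3,e_{-3})$ entry.

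Determinant and orthogonality therefore do real work, not just scalar-fixing. Let $z_2,z_1$ be the coefficients of $I_8,Q_0$, and put $p=z_2+z_1$, $q=z_2-z_1$. Orthogonality gives $p^2=q^2=1$ and determinant one gives $p^7q=1$, hence $p=q$ and $z_1=0$. Properness is essential here, because $Q_0$ is itself orthogonal and centralizes the frame. Only afterwards do further orthogonality entries collapse the unipotent family to a product of three absolute root elements; in the paper's short case these are $z_1+z_2=0$ and $z_0+z_2z_5=0$.

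This argument requires starting from the orthogonal determinant-one representative given by an elementary root word. The argument of Lemma~\ref{lem:tits-coherence} is not available. It needs a known reduction modulo $J$, which $\psi(y_{\gamma,S}(t))$ lacks for arbitrary $t$, and $A$ is not assumed local in this section.

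\textbf{The anticipated obstacle.} The difficulty you foresee does not arise, and your first fallback is illegitimate. Since $y_d$ commutes with $y_{\pm a}$, the constants $y_{\pm a,A}(1)$ are available constraints. By \eqref{eq:ae} and its Weyl transport, commutation with $y_{a,A}(1)$ kills an $e$-component of a candidate $y_d(u)y_e(v)y_f(w)$, and commutation with $y_{-a,A}(1)$ kills its $f$-component. So no residual $y_e(A_0)y_f(A_0)$ factor survives.

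By contrast, $d-b=c$ and $d-c=b$ are roots. Hence $y_{d,S}(t)$ does not commute with $y_{-b,S}(1)$ or $y_{-c,S}(1)$, and you may not impose commutation of $x$ with $y_{-b,A}(1)$ or $y_{-c,A}(1)$. Note also that $e$ is a long root, so it is not an alternative for the short case.
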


\begin{proof}
Every image has an orthogonal determinant-one representative $Y$,
obtained from an elementary root word. If its projective class
commutes with a constant root element, then
$YU_\beta Y^{-1}=\zeta U_\beta$ for some $\zeta\in A^\times$.
Since $\operatorname{tr}U_\beta=8\in A^\times$, taking traces gives
$\zeta=1$. Thus commutation with the fixed frame gives linear
equations on $Y$.

\smallskip\noindent\emph{The long simple root.}
For $v\in S_0$, the element $y_{a,S}(v)$ commutes with the constants
at $-b,a,c,d,-d,-e,f$. With
$M_a=M_{\alpha_2}=E_{2,3}-E_{-3,-2}$ and $Q_0$ from
Section~\ref{sec:d4-final-geometric-correction}, the common matrix
centralizer is
\[
 Y=-z_0M_a+z_1Q_0+z_2I_8,\qquad z_0,z_1,z_2\in A.
\]
The elimination has $61$ pivots equal to $\pm1$ and no residual
equations, so this description is valid after arbitrary base change.
Orthogonality and determinant one give
\[
 2z_1z_2=0,\qquad z_1^2+z_2^2=1,\qquad
 (z_1+z_2)^7(z_2-z_1)=1.
\]
For $p=z_1+z_2$ and $q=z_2-z_1$, these imply
$p^2=q^2=1$ and $p^7q=pq=1$. Hence $p=q$, so
$z_1=0$ and $z_2^2=1$. Rescaling gives
\[
 z_2^{-1}Y=X_{\alpha_2}(w),\qquad w=-z_0/z_2.
\]
Its adjoint class is fixed by $\delta_A=\tau\rho_A$, and $\tau$
fixes the pinned root map $x_{\alpha_2,Q}$. Thus
$x_{\alpha_2,Q}(\rho_A(w))=x_{\alpha_2,Q}(w)$.
Injectivity gives $w\in A_0$ and
$\psi(y_{a,S}(v))=y_{a,A}(w)$.

\smallskip\noindent\emph{The short simple root.}
For $t\in S$, the element $y_{b,S}(t)$ commutes with the constants
at $b,-a,e,f,-f$. Each outer absolute factor satisfies these
commutations separately. Their common matrix centralizer is
{\small
\[
 Y=
 \begin{pmatrix}
 z_4+z_6&-z_3&0&0&0&0&0&0\\
 0&z_4+z_6&0&0&0&0&0&0\\
 0&0&z_4+z_6&-z_1-z_2-z_5&0&0&z_0&z_1\\
 0&0&0&z_6&0&0&z_2&z_4\\
 0&0&0&0&z_4+z_6&0&0&0\\
 0&0&0&0&z_3&z_4+z_6&0&0\\
 0&0&0&0&0&0&z_4+z_6&0\\
 0&0&0&z_4&0&0&z_5&z_6
 \end{pmatrix}.
\]
}
Here $z_0,\ldots,z_6\in A$ are unrestricted at the linear stage:
elimination has $57$ pivots equal to $\pm1$ and no residual equations.

The $(e_4,e_4)$- and $(e_4,e_{-4})$-entries of $Y^tJ_8Y=J_8$,
together with determinant one, give
\[
 2z_4z_6=0,\qquad z_4^2+z_6^2=1,\qquad
 (z_4+z_6)^7(z_6-z_4)=1.
\]
As in the long-root case, $z_4=0$ and $z_6^2=1$.
Rescale by $z_6^{-1}$ and rename the parameters, so that $z_6=1$.
The remaining orthogonality conditions are
\[
 z_1+z_2=0,\qquad z_0+z_2z_5=0.
\]
They come from the $(e_4,e_{-3})$- and $(e_{-3},e_{-3})$-entries,
with $2$ cancelled in the latter; all other entries follow.
Hence
\[
 Y=X_{\alpha_4}(z_1)X_{\alpha_1}(-z_3)X_{\alpha_3}(-z_5).
\]
The pinned automorphism $\tau$ cycles these three simple root maps
with positive signs. Since $q_A(Y)$ is $\delta_A$-fixed,
uniqueness of absolute unipotent coordinates gives
\[
 -z_3=\rho_A(z_1),\qquad -z_5=\rho_A^2(z_1).
\]
Both products have diagonal entries one, so their projective equality
has no scalar ambiguity. Thus
$\psi(y_{b,S}(t))=y_{b,A}(z_1)$.

The fixed simple signed constants determine the Tits words.
Equation~\eqref{eq:coherent-weyl} therefore transports these two
recognition statements to every signed root.
Uniqueness follows from injectivity of the root maps.
\end{proof}

\subsection{The ring map}

\begin{theorem}[Coefficient recovery]\label{thm:recovery}
There is a unique unital ring homomorphism $\eta:S\to A$ such that
\[
 \psi(y_{\gamma,S}(t))=y_{\gamma,A}(\eta(t))
 \qquad(\gamma\in G_2,\ t\in S_\gamma).
\]
It satisfies $\eta(S_0)\subseteq A_0$ and
$\eta\circ T_S=T_A\circ\eta$.
If the coefficient pairs coincide and $\psi$ is an automorphism,
then $\eta$ is a ring automorphism.
\end{theorem}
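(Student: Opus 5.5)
By Lemma~\ref{lem:recognition}, for each signed root $\gamma$ there is a unique map $\eta_\gamma:S_\gamma\to A_\gamma$ with $\psi(y_{\gamma,S}(t))=y_{\gamma,A}(\eta_\gamma(t))$. Since $\psi$ is a homomorphism and the root maps are additive and injective, each $\eta_\gamma$ is additive. Because $\psi$ fixes the frame, $\eta_\gamma(1)=1$. The plan is to show that all short maps coincide with one map $\eta=\eta_b:S\to A$, that every long map is the restriction of $\eta$ to $S_0$, and then to read multiplicativity and trace compatibility off the commutator relations.

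\emph{Step 1: all short maps agree, and so do all long maps.} The source Tits representatives $N_s,N_l$ are words in the frame, so $\psi$ fixes them. Apply $\psi$ to \eqref{eq:coherent-weyl}. The sign $\varepsilon_{i,\gamma}$ passes through the additive maps, so $\eta_{s_i\gamma}(\varepsilon t)=\varepsilon\eta_\gamma(t)$, which gives $\eta_{s_i\gamma}=\eta_\gamma$. Since $W(G_2)$ is transitive on roots of each length, all short roots share one map $\eta$ and all long roots share one map $\eta_l:S_0\to A_0$.

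\emph{Step 2: multiplicativity and compatibility of the long map.} Apply $\psi$ to \eqref{eq:ab} with $t=1$:
\[
[y_a(v),y_b(1)]_R=y_c(v)\,y_d(-v)\,y_e(3v)\,y_f(3v^2).
\]
Comparing $c$-components, which are unique by absolute coordinates, gives $\eta(v)=\eta_l(v)$ for $v\in S_0$. Hence $\eta(S_0)\subseteq A_0$, and $\eta_l=\eta|_{S_0}$.

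Next take $v=1$ in \eqref{eq:ab} and use $N(t)=tt^\#$:
\[
\eta\bigl(\rho_S(t)\rho_S^2(t)\bigr)=\rho_A\bigl(\eta(t)\bigr)\rho_A^2\bigl(\eta(t)\bigr).
\]
Now apply \eqref{eq:db} with $u=1$, which gives $\eta(T_S(t))=T_A(\eta(t))$, so $\eta$ is trace-compatible. Polarizing $\#$ gives $\eta(t\times u)=\eta(t)\times\eta(u)$. Then \eqref{eq:cd} gives $\eta(T(tu))=T(\eta(t)\eta(u))$. With general $v$, the $c$-component of \eqref{eq:ab} gives $\eta(vt)=\eta(v)\eta(t)$ for $v\in S_0$.

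\emph{Step 3, the main obstacle: full multiplicativity on $S$.} The relations give only the twisted quantities $T(tu)$, $t\times u$ and $N(t)$, and not $tu$ itself. No $1/3$ is available in $S$, so I cannot simply average. I would try three routes in turn.
\begin{itemize}
\item[(a)] First try the identity $tu=\tfrac12\bigl(T(t)u+T(u)t-\dots\bigr)$ together with $t\times u+\rho(t)\rho(u)$-type expansions.
\item[(b)] Better, write $tu$ through $\#$: since $t^{\#\#}=N(t)t$ and $(t\times u)^\#$ expand polynomially, one can express $\rho^2(t)\rho^2(u)$ from $t^\#$, $u^\#$, $(t+u)^\#$.
\item[(c)] Failing that, work with images. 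Apply $\eta$ to the identity $t\times u=\rho(t)\rho^2(u)+\rho(u)\rho^2(t)$, evaluated at $u\mapsto u^\#$, and use $(u^\#)^\#=N(u)u$ with $N(u)\in S_0$, where multiplicativity is already known.
\end{itemize}
Route (c) should give $\eta(t\times u^\#)=\eta(t)\times\eta(u)^\#$. Combining this with the quadratic relation $t^{\#}\times t=T(t)t^\#\cdot(\ldots)$ would then isolate $\eta(tu)-\eta(t)\eta(u)$ as an element killed by $2$, hence zero because $2\in A^\times$. I expect this cubic-algebra bookkeeping to be the delicate point. If it stalls, I would fall back on the relation \eqref{eq:cb}, whose $e$- and $f$-components give $\eta(T(tu^\#))$ and $\eta(T(ut^\#))$. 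I would then vary $t$ by elements $vt$ with $v\in S_0$, $\rho_S$-translate, and combine.

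\emph{Step 4: uniqueness and the automorphism case.} Uniqueness of $\eta$ is immediate from injectivity of the root maps. If the coefficient pairs coincide and $\psi$ is an automorphism, apply the theorem to $\psi^{-1}$, which also fixes the frame. This gives $\eta'$ with $\eta'\eta=\id=\eta\eta'$, by uniqueness applied to $\psi^{-1}\psi=\id$ and $\psi\psi^{-1}=\id$. So $\eta$ is a ring automorphism.
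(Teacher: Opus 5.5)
Your Steps 1, 2 and 4 are correct and follow the paper. Your route to trace compatibility is a harmless variant: you get preservation of $\#$ from the $d$-factor of \eqref{eq:ab} at $v=1$, then use \eqref{eq:db} at $u=1$, whereas the paper uses the $d$-factor of \eqref{eq:cb} together with $t\times1=T(t)-t$. One small slip: $N(1)=1$, so \eqref{eq:ab} at $t=1$ reads $y_c(v)y_d(-v)y_e(v)y_f(v^2)$. Only the $c$-factor is used there, so nothing breaks.

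The genuine gap is Step 3, which is the heart of the theorem: you never establish full multiplicativity of $\eta$. You offer three routes but carry out none of them.
\begin{itemize}
\item Route (a) leaves the decisive identity as ``$\dots$''. It also writes $\tfrac12$ in $S$, which is not available.
\item Route (b) rests on a false claim. From $t^\#$, $u^\#$ and $(t+u)^\#$ you recover only the symmetric combination $t\times u=\rho(t)\rho^2(u)+\rho(u)\rho^2(t)$, not $\rho^2(t)\rho^2(u)$.
\item Route (c) ends with an unspecified ``quadratic relation''.
\end{itemize}

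What is missing is one integral identity, valid in every coefficient pair:
\[
 2tu=t\times u+T(t)u+T(u)t-T(t\times u).
\]
This is the polarization of $t^2=T(t)t-T(t^\#)+t^\#$; check it by expanding both sides in $t,\rho(t),\rho^2(t),u,\rho(u),\rho^2(u)$. Everything needed to use it is already in your Step 2:
\begin{itemize}
\item $\eta$ preserves $\times$ and $T$;
\item $\eta(vx)=\eta(v)\eta(x)$ for $v\in S_0$, and this applies because $T(t),T(u),T(t\times u)\in S_0$.
\end{itemize}
Applying $\eta$ gives $2\eta(tu)=2\eta(t)\eta(u)$ in $A$. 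Cancelling $2\in A^\times$ completes the proof with no division in $S$. This is exactly the paper's argument.

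Your route (c) could also be completed. Use $t\times u^\#=uT(tu)-tu^2$, which yields $\eta(tu^2)=\eta(t)\eta(u)^2$. Setting $t=1$ and polarizing then gives multiplicativity. As written, however, route (c) does not reach this.
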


\begin{proof}
Recognition defines parameter maps
$\eta_\gamma:S_\gamma\to A_\gamma$ by
$\psi(y_{\gamma,S}(t))=y_{\gamma,A}(\eta_\gamma(t))$.
Root additivity makes them additive, and the fixed frame gives
$\eta_\gamma(1)=1$. Applying $\psi$ to
\eqref{eq:coherent-weyl} yields
\[
 \eta_{s_i\gamma}(\varepsilon_{i,\gamma}t)
 =\varepsilon_{i,\gamma}\eta_\gamma(t).
\]
Additivity removes the signs. Weyl transitivity gives one short
map $\eta:S\to A$ and one long map $\eta_0:S_0\to A_0$,
both additive and taking $1$ to $1$.

Apply $\psi$ to \eqref{eq:ab} and compare the $c$-factor in
the unique ordered positive-root coordinates:
\[
 \eta(vt)=\eta_0(v)\eta(t)\qquad(v\in S_0,\ t\in S).
\]
Taking $t=1$ gives $\eta_0=\eta|_{S_0}$.
Thus $\eta(S_0)\subseteq A_0$, and multiplication is preserved
when one factor belongs to $S_0$.

Comparison of the $d$-factor in \eqref{eq:cb} gives
\[
 \eta(t\times_S u)=\eta(t)\times_A\eta(u).
\]
Since $t\times1=T(t)-t$, setting $u=1$ proves trace preservation.

Multiplication of arbitrary elements is recovered from the
integral identity
\[
 2tu=t\times u+T(t)u+T(u)t-T(t\times u).
\]
The trace values lie in $S_0$, so the mixed products are already
preserved. Applying $\eta$ gives
\begin{align*}
 2\eta(tu)
 &=\eta(t)\times_A\eta(u)
   +T_A(\eta(t))\eta(u)+T_A(\eta(u))\eta(t)\\
 &\quad-T_A\bigl(\eta(t)\times_A\eta(u)\bigr)\\
 &=2\eta(t)\eta(u).
\end{align*}
Cancel $2$ in $A$. No division in $S$ is used.
A short-root coordinate determines $\eta$ uniquely, and the root
generators determine $\psi$.

If $\psi$ is an automorphism, its inverse fixes the same frame.
Applying recovery to $\psi^{-1}$ gives a ring map $\theta:S\to S$.
Evaluation of the two composites on a short root gives
$\theta\eta=\eta\theta=\id$, so $\eta$ is a ring automorphism.
\end{proof}

The converse construction from a coefficient map is proved in
Section~\ref{sec:converse}.

\subsection{Relative rigidity}\label{sec:relative}

\begin{theorem}[Relative rigidity]\label{thm:relative}
Let $(S,\rho_S)$ be arbitrary, let $(A,\rho_A)$ be local, and suppose
$\Psi:E_S\to E_A$ satisfies Definition~\ref{def:prepared}.
There are unique $g\in G_A$ and a unital ring homomorphism
$\eta:S\to A$ satisfying $\eta T_S=T_A\eta$,
$\eta(S_0)\subseteq A_0$, and
\begin{equation}
 \Psi(y_{\gamma,S}(t))
 =g\,y_{\gamma,A}(\eta(t))g^{-1}
 \qquad(\gamma\in G_2,\ t\in S_\gamma).
 \label{eq:relative-root-rule}
\end{equation}
For any preparation $h$, one has $\bar g=\bar h$.
Under \eqref{eq:typed-rs}, also $\overline{\eta(t)}=\nu(t)$
for every $t\in S$.
No injectivity or surjectivity of $\Psi$ is required.
\end{theorem}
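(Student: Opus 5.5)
The plan is to assemble Theorem~\ref{thm:relative} from the two halves already proved: the normalization of Section~\ref{sec:normalization} and the coefficient recovery of Section~\ref{sec:recognition}.

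\emph{Existence.} Choose a preparation $h$ as in Definition~\ref{def:prepared}, and put $\varphi=\Int(h^{-1})\circ\Psi$. By Proposition~\ref{prop:factorization}, $\varphi$ still takes values in $E_A$. Lemma~\ref{lem:tits-coherence}, Proposition~\ref{prop:full-tits-averaging} and Theorem~\ref{thm:d4-simple-root-normalization-v3} give $C_{\rm norm}\equiv I_8\pmod J$ carrying the images of $N_s,N_l,U_s,U_l$ to the standard matrices. Theorem~\ref{thm:d4-final-geometric-correction} then replaces $C=C_{\rm norm}^{-1}$ by a proper similitude $C'$ whose class $c=[C']$ lies in $G_A$, reduces to $1$, and has the same transport on $\mathcal S$. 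Hence $\psi=\Int(c)\circ\varphi$ fixes $N_s,N_l,u_s,u_l$. Using the Tits words and \eqref{eq:coherent-weyl}, together with $U_{-s}=U_sN_s^{-1}U_s$ and its long analogue, $\psi$ fixes the whole constant frame $\mathcal F$.

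Theorem~\ref{thm:recovery} applies because $2\in A^\times$. It gives a unique unital $\eta:S\to A$ with $\eta(S_0)\subseteq A_0$, $\eta T_S=T_A\eta$, and $\psi(y_{\gamma,S}(t))=y_{\gamma,A}(\eta(t))$. Setting $g=hc^{-1}$ yields \eqref{eq:relative-root-rule}, and $\bar g=\bar h$ since $\bar c=1$.

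\emph{Uniqueness.} Suppose $(g,\eta)$ and $(g',\eta')$ both satisfy \eqref{eq:relative-root-rule}. Evaluate at $t=1$. Since $\eta(1)=\eta'(1)=1$, both $g$ and $g'$ transport $\mathcal F_A$ to $\Psi(\mathcal F_S)$. Thus $g^{-1}g'$ centralizes $\mathcal F_A$, and Lemma~\ref{lem:ambient-center} gives $g=g'$. Then $y_{b,A}(\eta(t))=y_{b,A}(\eta'(t))$ for all $t$, and injectivity of root maps gives $\eta=\eta'$.

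\emph{The statement $\bar g=\bar h$ for every preparation.} For an arbitrary preparation $h'$, run the construction with $h'$. It produces a pair $(g',\eta')$ with $\bar g'=\bar h'$, and by uniqueness $g'=g$. This step is where I expect the most care to be needed: uniqueness must be proved independently of the choice of preparation, and the argument above does exactly that.

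\emph{The refinement under \eqref{eq:typed-rs}.} Reduce \eqref{eq:relative-root-rule} modulo $\Jac(A)$:
\[
\operatorname{red}\Psi(y_{b,S}(t))=\bar g\,y_{b,k}(\overline{\eta(t)})\bar g^{-1}.
\]
By \eqref{eq:typed-rs}, the left side equals $\bar h\,y_{b,k}(\nu(t))\bar h^{-1}$. Here I use the pinned root rule of $\Phi^{\epsilon}_{\bar\nu}$ on $y_b$, which sends the class of $t$ to $y_{b,k}(\bar\nu(t))=y_{b,k}(\nu(t))$.

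It remains to check that the $\bar h$ in \eqref{eq:typed-rs} is a residual conjugator for which the preparation lifts. Since $\bar h$ lies in the image of $G_A\to G_k$, and reduction is surjective by Proposition~\ref{prop:factorization}, it has a lift $h$. That lift is a valid preparation, because \eqref{eq:typed-rs} implies Definition~\ref{def:prepared} via the same pinned construction. Hence $\bar g=\bar h$. Cancelling $\bar g$ and using injectivity of $y_{b,k}$ gives $\overline{\eta(t)}=\nu(t)$.

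Finally, none of the steps uses injectivity or surjectivity of $\Psi$. Every argument takes place on the images of the source root elements.
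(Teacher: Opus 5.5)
Your proposal is correct and follows the paper's own proof. Existence comes from the normalization of Section~\ref{sec:normalization} with $g=hc^{-1}$ followed by Theorem~\ref{thm:recovery}, and uniqueness from triviality of the frame centralizer (Lemma~\ref{lem:ambient-center}) plus injectivity of the short-root map. The residue assertion comes from reducing the root rule and cancelling $\Int(\bar h)$, where you spell out what the paper leaves implicit: $\bar g=\bar h$ for every preparation by uniqueness, including for the $\bar h$ in \eqref{eq:typed-rs} via a lift that is itself a preparation.
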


\begin{proof}
Section~\ref{sec:normalization} gives $g\in G_A$ with
$\bar g=\bar h$ such that
$\psi=\Int(g^{-1})\circ\Psi$ fixes the frame.
Theorem~\ref{thm:recovery} supplies $\eta$.

For two pairs, evaluating at $1$ makes the quotient of their
conjugators centralize the frame.
Lemma~\ref{lem:ambient-center} identifies the conjugators,
and a short-root coordinate then identifies the ring maps.
Under \eqref{eq:typed-rs}, reduction and cancellation of
$\Int(\bar h)$ on a short root give
\[
 y_{\gamma,k}(\overline{\eta(t)})=y_{\gamma,k}(\nu(t)),
\]
proving the residue assertion.
\end{proof}

Uniqueness holds over every target with $2$ invertible whenever
a pair satisfying the root rule exists.
In particular, an equivariant map
$f:(A,\rho_A)\to(D,\rho_D)$ sends the data $(g,\eta)$ to
$(f_*(g),f\circ\eta)$ for the composite
$f_*\circ\Psi:E_S\to E_D$.

\subsection{Descent}\label{subsec:descent}

We use this uniqueness to descend the conjugator and coefficient
map through a faithfully flat extension of the target.

\begin{proposition}[Descent]\label{prop:descent}
Let $(S,\rho_S)$ and $(B,\rho_B)$ be coefficient pairs, with
$2\in B^\times$, and let $f:B\to C$ be faithfully flat.
Fix $\Psi:E_S\to E_B$.
Suppose $g_C\in\mathbf G_Q(C)$ transports the base-changed
constant frame to $\mathbf G_Q(f)\Psi(\mathcal F_S)$.
Then $g_C$ descends uniquely to $g\in G_B$ with the same
transport identities. No operator on $C$ is required.

If $C$ also carries an automorphism $\rho_C$ with
$\rho_C^3=\id$ and $f\rho_B=\rho_Cf$, and a unital trace-preserving
ring map $\eta_C:S\to C$ satisfies
$\eta_C(S_0)\subseteq C^{\rho_C}$ and
\[
 \mathbf G_Q(f)\Psi(y_{\gamma,S}(t))
 =g_Cy_{\gamma,C}(\eta_C(t))g_C^{-1},
 \qquad \gamma\in G_2,\quad t\in S_\gamma,
\]
then $\eta_C=f\circ\eta$ for a unique unital ring homomorphism
$\eta:S\to B$ satisfying $\eta T_S=T_B\eta$,
$\eta(S_0)\subseteq B^{\rho_B}$, and the corresponding root rule
for $\Psi$. Neither $B$ nor $C$ need be local.
\end{proposition}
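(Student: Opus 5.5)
The plan is to descend $g_C$ first, using that the frame centralizer is trivial over every ring with $2$ invertible (Lemma~\ref{lem:ambient-center}), and then to descend $\eta_C$ coordinatewise by faithfully flat descent for elements of $B$.

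\emph{Step 1: descending $g_C$.} Let $C'=C\otimes_BC$, with the two coface maps $p_1,p_2:C\to C'$. Both $p_{1*}(g_C)$ and $p_{2*}(g_C)$ transport the base-changed constant frame $\mathcal F_{C'}$ to the same family $\mathbf G_Q(p_if)\Psi(\mathcal F_S)$, because $p_1f=p_2f$. Hence $p_{1*}(g_C)^{-1}p_{2*}(g_C)$ centralizes $\mathcal F_{C'}$. Since $2\in C'^\times$ and $C'\neq0$ by faithful flatness, Lemma~\ref{lem:ambient-center} makes this quotient trivial. As $\mathbf G_Q$ is an affine scheme, hence an fpqc sheaf, $g_C$ comes from a unique $g\in\mathbf G_Q(B)$. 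The transport identities $g\,y_{\gamma,B}(1)g^{-1}=\Psi(y_{\gamma,S}(1))$ hold after the injective map $\mathbf G_Q(B)\to\mathbf G_Q(C)$, so they hold over $B$.

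To see $g\in G_B$, apply $\delta_B=\tau\rho_B$. Both $\Psi(\mathcal F_S)\subset E_B$ and $\mathcal F_B$ are $\delta_B$-fixed, so $\delta_B(g)^{-1}g$ centralizes $\mathcal F_B$ and is trivial by the same lemma. Uniqueness among elements with these transport identities again follows from triviality of the frame centralizer. No operator on $C$ is used in this step.

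\emph{Step 2: descending $\eta_C$.} Put $\psi=\Int(g^{-1})\circ\Psi:E_S\to E_B$; it fixes the frame. Since Section~\ref{sec:recognition} needs only $2$ invertible in the target, Lemma~\ref{lem:recognition} and Theorem~\ref{thm:recovery} apply directly over $B$. They give a unique unital ring map $\eta:S\to B$ with $\eta T_S=T_B\eta$, $\eta(S_0)\subseteq B^{\rho_B}$, and the root rule for $\psi$, hence for $\Psi$ with conjugator $g$.

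Now base change along $f$. We have $f_*(g)=g_C$, and the root rule for $\Psi$ gives
\[
 g_Cy_{\gamma,C}(f\eta(t))g_C^{-1}=\mathbf G_Q(f)\Psi(y_{\gamma,S}(t))=g_Cy_{\gamma,C}(\eta_C(t))g_C^{-1}.
\]
The compatibility $f\rho_B=\rho_Cf$ ensures that $y_{\gamma,B}$ base changes to $y_{\gamma,C}$. Cancelling $g_C$ and using injectivity of the short root map over $C$ gives $\eta_C=f\circ\eta$. Uniqueness of $\eta$ follows from injectivity of $f$. Alternatively, one could descend $\eta_C$ directly: it is equalized by $p_1,p_2$ by the uniqueness remark after Theorem~\ref{thm:relative}, so it lands in $B$.

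\emph{Expected obstacle.} The delicate point is Step 1, specifically the sheaf property together with the identification of $\mathbf G_Q(B)$ inside $\mathbf G_Q(C)$ when neither ring is local. The classes are adjoint points rather than matrices, so I will argue purely scheme-theoretically with the affine scheme $\mathbf G_Q$ and avoid $\operatorname{PGO}$ representatives. I will also check that $C\otimes_BC$ is nonzero with $2$ invertible, which is needed to invoke Lemma~\ref{lem:ambient-center} there.
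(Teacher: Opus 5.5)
Your Step~1 is the paper's argument. The two pullbacks of $g_C$ to $C\otimes_BC$ transport the same integral frame, so Lemma~\ref{lem:ambient-center} makes them agree. Affine faithfully flat descent then gives $g$, and the same lemma over $B$ gives invariance under $\tau\rho_B$.

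Step~2 takes a genuinely different route. The paper never applies Section~\ref{sec:recognition} over $B$. Instead it:
\begin{itemize}
\item pulls the root rule back to $C\otimes_BC$, equipped with $\rho_C\otimes\rho_C$, and cancels the common conjugator;
\item obtains $p_1\eta_C=p_2\eta_C$ from short-root injectivity, and takes $\eta$ from the equalizer $B\to C\rightrightarrows C\otimes_BC$;
\item transfers the ring laws, trace preservation and the fixed-parameter condition through injectivity of $f$, and the root rule through injectivity of $\mathbf G_Q(B)\to\mathbf G_Q(C)$.
\end{itemize}
You instead build $\eta$ intrinsically over $B$ by Theorem~\ref{thm:recovery}, and use $\eta_C$ only to identify $f\circ\eta$ through the short-root rule over $C$. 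This gives slightly more than the paper's argument. The assumptions that $\eta_C$ is a unital trace-preserving ring map with $\eta_C(S_0)\subseteq C^{\rho_C}$ are never used, and the existence of $\eta$ follows from the first assertion plus recovery. The paper's route, by contrast, needs no information about $\Int(g^{-1})\circ\Psi$ over the nonlocal ring $B$.

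That last point is the one step you still need to justify. For nonlocal $B$ the paper does not prove $E_B\trianglelefteq G_B$; Proposition~\ref{prop:factorization} is local. So $\psi=\Int(g^{-1})\circ\Psi$ is a priori only a map into $g^{-1}E_Bg\subseteq G_B$, not into $E_B$ as you write. Section~\ref{sec:recognition}, however, assumes the codomain is $E_A$, and it uses this exactly to give each image an orthogonal determinant-one representative.

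You can supply this directly. Conjugation of $\mathbf{SO}_8$ on itself factors through the central quotient $\mathbf G_Q$, as in Proposition~\ref{prop:forms}. Hence $g^{-1}\Psi(x)g$ has a representative in $\mathbf{SO}_8(B)$, and the trace argument that removes the scalar goes through unchanged. Recognition then shows that $\psi$ sends root generators to root elements. So $\psi(E_S)\subseteq E_B$ holds a posteriori, and Theorem~\ref{thm:recovery} applies as you intend.
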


\begin{proof}
Put $C_2=C\otimes_B C$, with
$p_1(c)=c\otimes1$ and $p_2(c)=1\otimes c$.
The two pullbacks of $g_C$ transport the same indexed frame,
so they agree by Lemma~\ref{lem:ambient-center}.
Affine faithfully flat descent gives $g\in\mathbf G_Q(B)$.
Applying $\tau\rho_B$ to the descended transport identities
and using the same uniqueness gives $g\in G_B$.

For the second assertion, equivariance makes
$\rho_C\otimes\rho_C$ an automorphism of $C_2$.
Pulling back the root rule and cancelling the common conjugator
gives $p_1\eta_C(t)=p_2\eta_C(t)$ by injectivity of the short-root map.
The equalizer $B\to C\rightrightarrows C_2$ yields $\eta:S\to B$.
Injectivity of $f$ detects the ring laws, trace preservation,
and the fixed-parameter condition; faithful flatness detects
the root rule. The same frame and short-coordinate arguments
prove uniqueness.
\end{proof}

Throughout, the source remains $E_S$:
descent neither extends $\Psi$ to a new source nor lifts $g$
through a nonadjoint central isogeny.

%---------------------------
%---------------------------
%---------------------------

\medskip
\section{The graded cover and coefficient actions}\label{sec:centrality}
We prove centrality of the local graded-cover kernel and use it to
construct trace-preserving coefficient substitutions on elementary
groups. The cover is defined for every coefficient pair; locality
is imposed only after its presentation. Commutators in this section
are right commutators.

\subsection{Presentation and reduction}\label{subsec:cover-presentation}
For a chamber $C$, identified with its positive root system, put
$U_C=\langle y_\gamma(R_\gamma):\gamma\in C\rangle\leq E_R$ and define
\[
 \Scover_R=\operatorname{GCov}_{G_2}(E_R),\qquad
 q_R:\Scover_R\twoheadrightarrow E_R,\qquad K_R=\ker q_R.
\]
The graded cover amalgamates the concrete $U_C$ along their common
root-generated subgroups \cite[\S7.1]{EJK}. These are unipotent
groups, without torus factors. Write $\widetilde y_\gamma$ for the
lifted root maps and $\U_C$ for the canonical image of $U_C$.

\begin{lemma}[Presentation]\label{lem:borel}
The restriction $q_R:\U_C\to U_C$ is an isomorphism for every $C$.
The group $\Scover_R$ is presented by root additivity and all
nonopposite commutator relations from Section~\ref{sec:coordinates}.
If $\widehat E_R\leq\operatorname{St}_{D_4}(R)$ is generated by the
same triality-fixed absolute orbit words, its elementary projection
induces
$\operatorname{GCov}_{G_2}(\widehat E_R)\simeq\Scover_R$.
\end{lemma}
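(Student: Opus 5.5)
The plan has three parts, one for each assertion of the lemma. I take the graded cover in the sense of \cite[\S7.1]{EJK}: $\Scover_R$ is the colimit of the diagram of subgroups $U_C$, $C$ a chamber of $G_2$, together with their intersections $U_C\cap U_{C'}$ generated by the common root subgroups.

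\emph{Step 1: $q_R|_{\U_C}$ is an isomorphism.} The canonical map $U_C\to\Scover_R$ has image $\U_C$, and composing with $q_R$ gives the inclusion $U_C\hookrightarrow E_R$. This composite is injective, so $U_C\to\U_C$ is injective. It is surjective by definition of $\U_C$. Hence $q_R:\U_C\to U_C$ is an isomorphism. No locality is needed here.

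\emph{Step 2: the presentation.} Let $F$ be the group generated by symbols $\widetilde y_\gamma(t)$, $t\in R_\gamma$, subject to root additivity and the nonopposite commutator relations \eqref{eq:ab}--\eqref{eq:cd}, transported to all sixty pairs by \eqref{eq:coherent-weyl}. I will show that $F\simeq\Scover_R$. First, for each chamber $C$, let $F_C$ be the subgroup of $F$ generated by the $\widetilde y_\gamma$ with $\gamma\in C$. Collection in $F_C$, using that corrections have greater height, writes every element as an ordered product with one factor per root of $C$. The map $F_C\to U_C$ is surjective, and by uniqueness of absolute coordinates (Section~\ref{sec:coordinates}) the ordered normal form is injective. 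So $F_C\simeq U_C$. The same argument identifies the subgroups for intersections of chambers, which are root-generated by a closed set of roots. This gives a compatible family $U_C\to F$, hence a homomorphism $\Scover_R\to F$.

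Conversely, every defining relation of $F$ involves two nonopposite roots. Such roots lie in a common chamber, so each relation holds in the corresponding $\U_C$. This gives $F\to\Scover_R$. The two maps are mutually inverse on generators. The main point to check carefully is that the amalgamation in \cite{EJK} uses only chambers and root-generated subgroups, not torus data, so that this relation set suffices. I expect this verification, together with the normal form in the intersections, to be the main obstacle.

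\emph{Step 3: the Steinberg variant.} Let $\widehat U_C\le\widehat E_R$ be generated by the lifted orbit words for $\gamma\in C$. The absolute Steinberg relations, specialized exactly as in the derivation of \eqref{eq:ab}--\eqref{eq:cd}, imply the same relative relations, because those identities were obtained from absolute unipotent commutators over $\mathbb Z[t_i,u_i,v,w]$. The projection $\widehat U_C\to U_C$ is then injective. The reason is that the positive unipotent subgroup of $\operatorname{St}_{D_4}(R)$ maps isomorphically onto $U^+_{\rm abs}(R)$, and $\widehat U_C$ lies in a Weyl conjugate of it. So the two diagrams of chamber subgroups, together with their intersections, are isomorphic. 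Their colimits agree, and the elementary projection induces $\operatorname{GCov}_{G_2}(\widehat E_R)\simeq\Scover_R$.
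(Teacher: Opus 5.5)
Your proposal is correct and follows essentially the paper's own argument: injectivity of $U_C\to\Scover_R$ because its composite with $q_R$ is the concrete inclusion; collection plus uniqueness of absolute coordinates to show that root additivity and the nonopposite relations present each $U_C$; the fact that every nonopposite pair lies in a positive system; and, for the Steinberg variant, a Borel-wise comparison resting on injectivity of the Steinberg projection on absolute unipotent subgroups. The obstacle you flag is settled by the definition the paper uses (the cover amalgamates the $U_C$ along their common root-generated unipotent subgroups, with no torus data), and the point you leave implicit in Steps 2 and 3 is supplied in the paper by pulling back a regular functional defining $C$ to a regular absolute functional (no absolute root folds to zero), which puts all absolute factors of a relative chamber into one absolute unipotent group.
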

\begin{proof}
A regular functional defining $C$ pulls back under $D_4\to G_2$ to a
regular absolute functional, since no absolute root folds to zero.
Thus all absolute factors of $U_C$ lie in one absolute unipotent
group. Its unique coordinates \cite[Theorem~5.1.13]{Conrad} give
unique relative coordinates in any order. Commutator corrections
have greater height than both inputs, so the stated relations collect
every word into one factor per root. A collected word with trivial
concrete image has all parameters zero. These relations therefore
present $U_C$, whose map into the cover is injective because its
composition with $q_R$ is the concrete inclusion. Every distinct
nonopposite pair lies in a positive system, giving the cover
presentation.

The absolute Steinberg projection is likewise injective on each
absolute unipotent subgroup, by collection \cite[\S7.2]{EJK}.
Hence $\widehat E_R\to E_R$ is an isomorphism on each relative
unipotent subgroup, and the Borel-wise comparison of
\cite[\S7.1]{EJK} identifies their graded covers, not
$\widehat E_R$ itself with either $E_R$ or $\Scover_R$.
\end{proof}
In the triality coordinates of \cite[p.~69]{EJK}, our orbit words are
\[
 y_{\pm b}(t)=z_{\pm b}(\rho(t)),\quad
 y_{\pm c}(t)=z_{\pm c}(\rho(t)),\quad
 y_{\pm d}(t)=z_{\pm d}(-t),
\]
with unchanged long charts. Negative $z$-charts are obtained by
negating the absolute roots while retaining coefficient signs and
powers of $\rho$.

Now let $(R,\rho)$ be local. Put $J_\gamma=J$ for short roots and
$J_\gamma=J_0$ for long roots.
\begin{lemma}[Reduction]\label{lem:cover-reduction}
Reduction induces a surjection $r_R:\Scover_R\to\Scover_k$ with kernel
\[
 D_R=\left\langle\!\left\langle
 \widetilde y_\gamma(t):t\in J_\gamma,\ \gamma\in G_2
 \right\rangle\!\right\rangle_{\Scover_R}.
\]
\end{lemma}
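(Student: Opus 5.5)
The plan is to build the reduction map from the presentation in Lemma~\ref{lem:borel}, and then get the kernel by comparing presentations.

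\emph{Existence of $r_R$.} Reduction $R\to k$ is an equivariant unital ring map carrying $R_0$ into $k^{\bar\rho}$. The relations of Lemma~\ref{lem:borel} are root additivity and the nonopposite commutator relations \eqref{eq:ab}--\eqref{eq:cd}, with their Weyl transports. Their coefficients are built from ring operations and $T,\#,N,\times$, and these commute with reduction. So the assignment $\widetilde y_\gamma(t)\mapsto\widetilde y_{\gamma}(\bar t)$ respects every defining relation and defines a homomorphism $r_R:\Scover_R\to\Scover_k$. Surjectivity holds because $\Scover_k$ is generated by the $\widetilde y_\gamma(\bar t)$, and every parameter lifts by Lemma~\ref{lem:local-cell}. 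For long roots we use the averaged lift $T(t)/3\in R_0$.

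\emph{The kernel.} Clearly $D_R\subseteq\ker r_R$. For the reverse inclusion, set $\overline{\Scover}=\Scover_R/D_R$ and construct an inverse $\Scover_k\to\overline{\Scover}$ by
\[
 \widetilde y_\gamma(\bar t)\mapsto \widetilde y_\gamma(t)D_R,
\]
where $t\in R_\gamma$ is any lift of $\bar t$. This is well defined: two lifts differ by an element of $R_\gamma$ with zero residue, which lies in $J_\gamma$ because $J_0=J\cap R_0$ (Lemma~\ref{lem:local-cell}). By additivity the two images then differ by $\widetilde y_\gamma(j)\in D_R$. Next we check that the defining relations of $\Scover_k$ hold for the images. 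Lift the residue parameters $\bar t,\bar u,\bar v,\dots$ to $t,u,v,\dots$. The same relation then holds in $\Scover_R$ with lifted parameters. Its right-hand side has coefficients such as $vt$, $-vt^\#$, $T(tu^\#)$, and these are lifts of the corresponding residue coefficients. By the well-definedness just shown, the relation holds modulo $D_R$. Hence we obtain a homomorphism $s:\Scover_k\to\overline{\Scover}$. The composite $s\circ\bar r_R$ is the identity on the generators $\widetilde y_\gamma(t)D_R$, so $\bar r_R$ is injective and $\ker r_R=D_R$.

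\emph{Main obstacle.} The delicate point is the long-root parameters. The cover relations involve parameters in $R_0$ and $k^{\bar\rho}$, so lifts must land in $R_0$ and the kernel of $R_0\to k^{\bar\rho}$ must be exactly $J_0$. Both facts come from Lemma~\ref{lem:local-cell}, using $3\in R^\times$ for the averaging. A second point is that the relation list in Lemma~\ref{lem:borel} must be a genuine presentation, uniform in the coefficient pair. Only then does the same list present $\Scover_k$, so that checking relations generator by generator suffices. This uniformity holds because the lemma was proved for every coefficient pair, including $(k,\bar\rho)$.
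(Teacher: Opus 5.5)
Your proposal is correct and follows the paper's own argument. The paper likewise lifts residue parameters by Lemma~\ref{lem:local-cell} and observes that modulo $D_R$ a root generator depends only on its residue. It then lifts the defining relations of Lemma~\ref{lem:borel} to obtain a homomorphism $\Scover_k\to\Scover_R/D_R$ inverse to the map induced by reduction. You have spelled out the details the paper leaves implicit: well-definedness via $J_0=J\cap R_0$, compatibility of $T,\#,N,\times$ with reduction, and uniformity of the presentation over $(k,\bar\rho)$.
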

\begin{proof}
All residue parameters lift by Lemma~\ref{lem:local-cell}. Modulo the
displayed normal closure, a root generator depends only on its residue.
Lifting the defining relations gives a homomorphism from $\Scover_k$
to this quotient, inverse to the map induced by reduction.
\end{proof}
Write $\U_C(J)=\ker(r_R|_{\U_C})=\prod_{\gamma\in C}
\widetilde y_\gamma(J_\gamma)$ in any fixed order.
We distinguish the three kernels
\[
 K_R=\ker q_R,\qquad D_R=\ker r_R,\qquad
 N_R=\ker(\pi_R:E_R\to E_k).
\]
They satisfy $q_kr_R=\pi_Rq_R$ and
$r_R^{-1}(K_k)=q_R^{-1}(N_R)$. No normal-generation description
of $N_R$ is assumed.

\subsection{Toral action and rank-one lifting}\label{subsec:lifted-levi}
Put $H_R=T_Q(R)^{\tau\rho}$ and
\[
 \begin{aligned}
 P_\gamma&=\langle\widetilde y_\gamma(R_\gamma),
                    \widetilde y_{-\gamma}(R_\gamma)\rangle,\\
 \Llift_R&=\langle P_\gamma\cap q_R^{-1}(H_R):\gamma\in G_2\rangle,
 \qquad L_R^1=\Llift_R\cap D_R.
 \end{aligned}
\]
Thus $\Llift_R$ is generated by rank-one words with toral concrete
image; it is not defined as the whole inverse image of $H_R$.
For $h\in H_R$, let $m_\gamma(h)\in R_\gamma^\times$ be the multiplier
in $hy_\gamma(t)h^{-1}=y_\gamma(m_\gamma(h)t)$.

\begin{lemma}[Toral action]\label{lem:levi}
For $l\in\Llift_R$, $\gamma\in G_2$, and $t\in R_\gamma$, one has
\[
 l\widetilde y_\gamma(t)l^{-1}
 =\widetilde y_\gamma(m_\gamma(q_R(l))t).
\]
Hence $\Llift_R$ normalizes $\U_C$ and $\U_C(J)$ for every chamber.
If $l\in L_R^1$, then $m_\gamma(q_R(l))\equiv1\pmod{J_\gamma}$.
\end{lemma}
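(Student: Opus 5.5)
The plan is to reduce everything to the generators of $\Llift_R$ and a concrete computation in the rank-one subgroups $P_\beta$. Fix $l\in P_\beta\cap q_R^{-1}(H_R)$ and write $h=q_R(l)$. The identity $l\widetilde y_\gamma(t)l^{-1}=\widetilde y_\gamma(m_\gamma(h)t)$ is multiplicative in $l$: composing conjugations multiplies the multipliers, and $m_\gamma$ is a character of $H_R$. So it suffices to verify it for such generators $l$, for every $\gamma$.

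The case $\gamma=\pm\beta$ I handle inside the rank-one group $P_\beta$ itself. Here I would use that $\widetilde y_{\pm\beta}$ generate a group on which a Steinberg-type identity holds. Specifically, a rank-one word whose concrete image is toral can be rewritten, using the cover relations together with the standard rank-one elements $\widetilde w_\beta(v)$ and $\widetilde h_\beta(v)$, as a product of $\widetilde h_\beta(v)$'s times something in the kernel. I expect this step to be the main obstacle: the cover presentation in Lemma~\ref{lem:borel} includes only nonopposite relations, so conjugation of $\widetilde y_\beta$ by elements of $P_\beta$ is not governed by any listed relation.

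To get around this, I would first try to prove the identity for all $\gamma\neq\pm\beta$ and recover $\pm\beta$ afterwards. If $\beta$ is long, $\gamma=\pm\beta$ can be written through commutator relations of roots $\delta,\epsilon\neq\pm\beta$. For example, by \eqref{eq:ae} we have $\widetilde y_f(vw)=\commR{\widetilde y_a(v)}{\widetilde y_e(w)}$, and similar relations express each root element as a factor of a commutator of nonproportional roots; this uses $2,3$ invertible to extract factors. For short $\beta$ I would use \eqref{eq:db} together with \eqref{eq:cd}, or a suitable commutator isolating the $c$-factor.

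For $\gamma\neq\pm\beta$, the roots $\gamma+\mathbb Z\beta$ form a root string in which no root is opposite to $\pm\beta$. The subgroup generated by $\widetilde y_{\gamma+i\beta}$ and $P_\beta$ sits inside the amalgam of the two chambers containing $\gamma$ together with $\beta$ or with $-\beta$. Each $\widetilde y_{\pm\beta}(u)$ acts on the string by the collected commutator formulas, which are honest relations in $\Scover_R$. This defines an action of $P_\beta$ on the product $\prod_i\widetilde y_{\gamma+i\beta}(R)$. That product maps isomorphically onto its concrete image, since it lies in a single $\U_C$ by Lemma~\ref{lem:borel}. Hence the action factors through $q_R$, and for toral $h$ it is the multiplier action.

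Once the root generators are covered, $\Llift_R$ normalizes each $\U_C$, because $\U_C$ is generated by root lifts. Since $m_\gamma(h)$ is a unit, $\widetilde y_\gamma(J_\gamma)$ is preserved, so $\U_C(J)$ is normalized as well. Finally, take $l\in L^1_R\subseteq D_R$. Then $r_R(l)$ is trivial, so $r_R(l)$ acts trivially on $\widetilde y_{\gamma,k}$. Applying $r_R$ to the identity gives $\widetilde y_{\gamma,k}(\overline{m_\gamma(h)t})=\widetilde y_{\gamma,k}(\bar t)$, and injectivity of the residual root maps (Lemma~\ref{lem:borel} over $k$) yields $m_\gamma(h)\equiv1\pmod{J_\gamma}$.
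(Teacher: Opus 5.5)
Your overall plan is the paper's. You prove the formula for a generator $l\in P_\beta$ with toral image on roots off the $\beta$-line, using injectivity of $q_R$ on a lifted unipotent group. You then reach the $\beta$-line through off-axis words and multiply. Your multiplicativity, normalization and residue-congruence steps are fine.

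\textbf{The off-line step fails as you set it up.} The product $\prod_i\widetilde y_{\gamma+i\beta}(R)$ over the root string is in general not stable under conjugation by $\widetilde y_{\pm\beta}$ in $G_2$. The commutator of $y_{\pm\beta}$ with $y_\delta$ has factors at roots $i(\pm\beta)+j\delta$ with $j\geq2$ as well.
\begin{itemize}
\item For $(\beta,\gamma)=(a,b)$ the string is $\{b,c\}$, yet by \eqref{eq:ab} conjugating $\widetilde y_b(t)$ by $\widetilde y_a(v)$ produces factors at $d,e,f$.
\item For $(\beta,\gamma)=(b,a)$ the string is $\{a,c,d,e\}$, while \eqref{eq:ab} and \eqref{eq:cb} produce $f$-factors.
\end{itemize}
So there is no action of $P_\beta$ on the string product, and your claim that the action factors through $q_R$ has nothing to rest on.

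The repair, which is what the paper does, is to use a half-plane instead of a string. Choose a functional $\lambda$ with $\lambda(\beta)=0<\lambda(\gamma)$ and put $\Sigma=\{\delta:\lambda(\delta)>0\}$.
\begin{itemize}
\item Correction roots $i(\pm\beta)+j\delta$ with $j>0$ stay in $\Sigma$.
\item After perturbing $\lambda$, both $\Sigma\cup\{\beta\}$ and $\Sigma\cup\{-\beta\}$ lie in chambers, so these conjugation formulas are genuine cover relations.
\item $\U_\Sigma$ lies in a single $\U_C$, on which $q_R$ is injective.
\end{itemize}
Then $l\widetilde y_\gamma(t)l^{-1}$ and $\widetilde y_\gamma(m_\gamma(q_R(l))t)$ both lie in $\U_\Sigma$ and have the same concrete image, so they coincide.

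\textbf{On the $\beta$-line, your short-root suggestion does not work.} Both \eqref{eq:db} and \eqref{eq:cd} have long outputs only, so they cannot express a short root element. What you need is \eqref{eq:ab} at $v=1$, rearranged as
\[
\widetilde y_c(t)=\commR{\widetilde y_a(1)}{\widetilde y_b(t)}\widetilde y_f(-N(t))\widetilde y_e(-N(t))\widetilde y_d(t^\#).
\]
For long roots, use $\widetilde y_f(v)=\commR{\widetilde y_a(v)}{\widetilde y_e(1)}$ from \eqref{eq:ae}. Both identities are exact, need no division by $2$ or $3$, and have all letters off the output line inside one $\U_C$.

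Transport them to every root of the same length. You can do this by lifting Weyl conjugation to graded automorphisms of the cover, as the paper does. Alternatively, each transported identity holds concretely in some $U_C\simeq\U_C$.

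You must also say why the conjugated word equals $\widetilde y_\gamma(m_\gamma(q_R(l))t)$. The off-line case rescales each letter, so the conjugated word stays in the same $\U_C$ with concrete image $y_\gamma(m_\gamma(q_R(l))t)$. Injectivity of $q_R$ on $\U_C$ then identifies the two.
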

\begin{proof}
First take $l\in P_\alpha$ with toral image and $\gamma$ noncollinear
with $\alpha$. Choose a functional $\lambda$ with
$\lambda(\alpha)=0$ and $\lambda(\gamma)>0$. Its positive sector lies
in a chamber. Conjugating a sector root $\beta$ by a root at
$\pm\alpha$ introduces only roots $i(\pm\alpha)+j\beta$, $j>0$,
still in the sector. Thus $P_\alpha$ normalizes the lifted sector.
Both sides of the asserted formula lie there and have the same
concrete image, so unipotent injectivity gives equality.

For the $\alpha$-line use the off-axis identities
\[
 \begin{aligned}
 \widetilde y_c(t)&=
 [\widetilde y_a(1),\widetilde y_b(t)]_R
 \widetilde y_f(-N(t))\widetilde y_e(-N(t))\widetilde y_d(t^\#),\\
 \widetilde y_f(v)&=[\widetilde y_a(v),\widetilde y_e(1)]_R.
 \end{aligned}
\]
Every letter lies off the output-root line, and all lie in one
unipotent group. Concrete Weyl conjugation lifts to a graded
automorphism of the cover \cite[Lemma~7.1]{EJK}, so these identities
give such an expression for every root. This does not assume a
conjugation formula for a chosen cover word. Apply the noncollinear
formula to each letter and use unipotent injectivity again.
The formula now holds for the generators of $\Llift_R$, hence for
their products. Multipliers preserve the radical levels, and
reduction at $t=1$ gives the final congruence.
\end{proof}

For a signed root $\gamma$, the absolute rank-one maps give
$j_\gamma:\SL_2(R_\gamma)\to\mathbf G_Q(R)$ with
$j_\gamma(x_\pm(t))=y_{\pm\gamma}(t)$, where
$x_+(t)=\left(\begin{smallmatrix}1&t\\0&1\end{smallmatrix}\right)$ and
$x_-(t)=x_+(t)^t$. For a short root this is the product of three
commuting absolute maps with the prescribed signs and powers of $\rho$.
Evaluating a chosen rank-one word in $\SL_2$ computes its concrete
image; no reverse map $P_\gamma\to\SL_2(R_\gamma)$ is assumed.

\begin{lemma}[Rank-one lifting]\label{lem:wg}\label{subsec:rank-one-correction}
A chosen rank-one word with toral concrete image has diagonal
$\SL_2(R_\gamma)$ evaluation. If $w\in P_\gamma$ and
$q_k(r_R(w))\in H_k$, then
\[
 w=\widetilde y_\gamma(j_+)\,l\,\widetilde y_{-\gamma}(j_-),
 \qquad j_\pm\in J_\gamma,\quad l\in P_\gamma\cap\Llift_R,
 \quad r_R(l)=r_R(w).
\]
In particular, $r_R(\Llift_R)=\Llift_k$.
\end{lemma}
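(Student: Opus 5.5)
The plan is to work inside the rank-one piece, using the map $j_\gamma$ to compute in $\SL_2(R_\gamma)$. For the first assertion, let a rank-one word have toral concrete image. Its $\SL_2(R_\gamma)$ evaluation $M$ satisfies $j_\gamma(M)\in H_R$. Write $M=\left(\begin{smallmatrix}\alpha&\beta\\ \gamma'&\delta\end{smallmatrix}\right)$ and use the absolute coordinates. For a long root, $j_\gamma$ is the absolute $\SL_2\to\mathbf G_Q$ map of $\alpha_2$. For a short root it is the product of three commuting absolute maps with arguments twisted by $\rho$. In each absolute factor, the image of $M$ in $\operatorname{PGL}_2$ or $\SL_2$ acts on the vector representation through the $8\times8$ matrix. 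Toral means diagonal in the basis $e_{\pm i}$. The off-diagonal entries $\beta,\gamma'$ occur as matrix entries, e.g.\ $\beta M_r$ plus higher corrections, and they must vanish. This holds because $2$ need not be invertible here: the entries $\beta$ and $\gamma'$ appear linearly in positions that torus elements leave zero. Hence $M$ is diagonal.

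For the decomposition, let $w\in P_\gamma$ with $q_k(r_R(w))\in H_k$. Choose a word for $w$ and let $M\in\SL_2(R_\gamma)$ be its evaluation. By the first part applied over $k$ (reducing the word), $\bar M$ is diagonal, so $\beta,\gamma'\in J_\gamma$ and $\alpha,\delta$ are units. Factor
\[
 M=x_+(j_+)\operatorname{diag}(\alpha',\alpha'^{-1})x_-(j_-),\qquad j_\pm\in J_\gamma,
\]
solving $j_-=\gamma'/\delta$ and $j_+=\beta/\delta$ in $R_\gamma$; the $\rho$-fixedness is automatic for long roots because all entries lie in $R_0$. Then define $l=\widetilde y_\gamma(-j_+)\,w\,\widetilde y_{-\gamma}(-j_-)\in P_\gamma$. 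Its word evaluates to a diagonal matrix, so $q_R(l)=j_\gamma(\text{diag})\in H_R$; this uses $\delta$-fixedness, which holds since $l\in E_R$. Thus $l\in P_\gamma\cap q_R^{-1}(H_R)\subseteq\Llift_R$. Since $j_\pm\in J_\gamma$, the two outer factors lie in $D_R$, giving $r_R(l)=r_R(w)$.

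For $r_R(\Llift_R)=\Llift_k$, take a generator $\bar w\in P_{\gamma,k}\cap q_k^{-1}(H_k)$. Lift its root letters using Lemma~\ref{lem:local-cell} to get $w\in P_\gamma$ with $r_R(w)=\bar w$. The decomposition then gives $l\in\Llift_R$ with $r_R(l)=\bar w$. The reverse inclusion holds because reduction carries the toral rank-one generators to toral rank-one generators.

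The main obstacle is the first step. The concrete image only lives in $\mathbf G_Q$ through $j_\gamma$, which need not be injective, with kernel $\pm1$ in $\SL_2$. So a toral image must be shown to force an actually diagonal $M$ rather than one diagonal only up to that kernel. The plan is to read off the entry $\beta$ directly from a fixed matrix coefficient of the $8\times8$ orthogonal representative. That representative is unique up to sign, and the sign does not affect vanishing.
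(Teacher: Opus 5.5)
Your proof is correct. Its second and third parts match the paper's: radical off-diagonal entries from the residual test, the factorization $M=x_+(\beta/\delta)\operatorname{diag}(\delta^{-1},\delta)x_-(\gamma'/\delta)$, correction of $w$ by two radical root elements, and root-by-root lifting of the generators of $\Llift_k$. The difference is the diagonality test.

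The paper works in the adjoint representation. A toral image fixes the Cartan element of the absolute root carrying the free parameter; the other orbit factors act trivially on it. The formula for $M\operatorname{diag}(1,-1)M^{-1}$ and independence of root and coroot vectors give $ad+bc=1$ and $ab=cd=0$. Then $\det M=1$ and $2\in R^\times$ force $b=c=0$.

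You instead use that the inverse image of the adjoint torus in $\mathbf{SO}_8$ is the diagonal torus, and read off entries of the $8\times8$ matrix. This needs no division by $2$ at this step, which is presumably what your sentence about $2$ means; as written it is a non sequitur. Your route is tied to the explicit matrix model and needs one extra observation for short roots. Two of the three orbit roots, e.g.\ $\alpha_3,\alpha_4$ for $\gamma=b$, share the block spanned by $e_{\pm3},e_{\pm4}$. There the entries are products of entries of $M$ and $\rho^2(M)$, not single entries. So you must read $\rho(\beta)$ and $\rho(\gamma')$ from the isolated factor $\alpha_1$, then invert $\rho$. The paper's Cartan test is uniform in $\gamma$ and does not depend on a faithful representation.

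Finally, the obstacle in your last paragraph is not an obstacle. Vanishing of an entry is unaffected by a unit scalar, so $\pm M$ diagonal already means $M$ diagonal.
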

\begin{proof}
Let $M=\left(\begin{smallmatrix}a&b\\c&d\end{smallmatrix}\right)$ be
the evaluation. On the Cartan element of the absolute root carrying
the free parameter, the other factors in a short orbit act trivially,
and
\[
 M\begin{pmatrix}1&0\\0&-1\end{pmatrix}M^{-1}
 =\begin{pmatrix}ad+bc&-2ab\\2cd&-ad-bc\end{pmatrix}.
\]
A toral image fixes this element. Independence of root and coroot
vectors gives $ad+bc=1$, $ab=cd=0$. Together with $ad-bc=1$ and
$2\in R^\times$, this yields $ad=1$, $bc=0$, and then $b=c=0$.

If the word is toral only after reduction, the same test gives
$b,c\in J_\gamma$ and $d\in R_\gamma^\times$. The identity
$M=x_+(b/d)\operatorname{diag}(d^{-1},d)x_-(c/d)$ shows that
\[
 l=\widetilde y_\gamma(-b/d)\,w\,\widetilde y_{-\gamma}(-c/d)
\]
has toral image. Rearranging gives the factorization with
$j_+=b/d$ and $j_-=c/d$. Since these parameters are radical,
$r_R(l)=r_R(w)$. Every defining generator of $\Llift_k$ lifts
root by root and can be corrected this way; products give
$r_R(\Llift_R)=\Llift_k$.
\end{proof}

\subsection{The congruence decomposition}\label{subsec:cover-congruence}
For $u,v\in R_\gamma$ with $\Delta=1+uv\in R_\gamma^\times$, define
\[
 \ell_\gamma(u,v)=\widetilde y_\gamma(-u/\Delta)
 \widetilde y_{-\gamma}(v)\widetilde y_\gamma(u)
 \widetilde y_{-\gamma}(-v/\Delta).
\]
Its rank-one evaluation is $\operatorname{diag}(\Delta^{-1},\Delta)$,
so $\ell_\gamma(u,v)\in\Llift_R$. Its definition and root additivity give
\begin{align}
 \widetilde y_{-\gamma}(v)\widetilde y_\gamma(u)
 &=\widetilde y_\gamma(u/\Delta)\ell_\gamma(u,v)
                  \widetilde y_{-\gamma}(v/\Delta),\label{eq:cover-exchange}\\
 \widetilde y_{-\gamma}(v)\widetilde y_\gamma(u)\widetilde y_{-\gamma}(-v)
 &=\widetilde y_\gamma(u/\Delta)\ell_\gamma(u,v)
                  \widetilde y_{-\gamma}(-uv^2/\Delta).
 \label{eq:cover-radical-conjugation}
\end{align}
If either input is radical, then $\ell_\gamma(u,v)\in L_R^1$.
If $u$ is radical, both outer parameters in the second identity
are radical, even when $v$ is not.

\begin{proposition}[Congruence decomposition]\label{prop:congruence}\label{prop:cover-cell}
For every chamber $C$,
$D_R=\U_C(J)L_R^1\U_{-C}(J)$.
\end{proposition}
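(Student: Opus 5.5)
We prove $D_R=\U_C(J)L_R^1\U_{-C}(J)$ by showing that the right-hand side $X=\U_C(J)L_R^1\U_{-C}(J)$ is a subgroup of $\Scover_R$ that contains the generators $\widetilde y_\gamma(J_\gamma)$ and is normal. The inclusion $X\subseteq D_R$ is immediate: each factor reduces to the identity under $r_R$, since $L_R^1=\Llift_R\cap D_R$ and $\U_{\pm C}(J)$ are kernels of reduction. For the reverse inclusion, $D_R$ is the normal closure of the radical root elements (Lemma~\ref{lem:cover-reduction}), so it suffices to show that $X$ is stable under left multiplication by every $\widetilde y_\gamma(t)$ with $t\in J_\gamma$, and under conjugation by every $\widetilde y_\gamma(u)$ with $u\in R_\gamma$ arbitrary. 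Because $X\ni1$, left stability under the radical generators then forces $D_R$-generators and products into $X$, and conjugation stability gives normality.

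\emph{Step 1 (reducing to simple roots of $C$).} By Lemma~\ref{lem:levi}, $L_R^1$ normalizes $\U_C(J)$ and $\U_{-C}(J)$. Moreover $\U_C$ normalizes $\U_C(J)$, since $\U_C(J)$ is the kernel of reduction restricted to $\U_C$. Hence $X$ is stable under left multiplication by $\U_C(J)$, and by symmetry under right multiplication by $\U_{-C}(J)$. Writing $X=\U_{-C}(J)L^1_R\U_C(J)$ would give the missing side, so the essential content is equality of the two orders. The elements of $\U_{\pm C}$ for arbitrary parameters, together with $\Llift_R$, generate $\Scover_R$, because Weyl automorphisms of the cover permute chambers. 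We therefore aim to show $X$ is invariant under conjugation by $\widetilde y_{-\alpha}(v)$ for $\alpha$ simple in $C$ and $v\in R_\alpha$, and symmetrically for $\widetilde y_\alpha(u)$.

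\emph{Step 2 (passing a negative simple root across).} Fix a simple root $\alpha$ of $C$ and write $\U_C(J)=\widetilde y_\alpha(J_\alpha)\,\U_{C\setminus\alpha}(J)$. The group $\U_{C\setminus\{\alpha\}}$ is normalized by $P_\alpha$, by the sector argument of Lemma~\ref{lem:levi} with a functional vanishing on $\alpha$. Hence conjugating $\U_{C\setminus\alpha}(J)$ by $\widetilde y_{-\alpha}(v)$ stays inside it: the radical levels are preserved because the concrete image reduces trivially and unipotent injectivity applies. For the factor $\widetilde y_\alpha(j)$ with $j\in J_\alpha$, use \eqref{eq:cover-radical-conjugation}. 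It rewrites
\[
\widetilde y_{-\alpha}(v)\widetilde y_\alpha(j)\widetilde y_{-\alpha}(-v)=\widetilde y_\alpha(j/\Delta)\,\ell_\alpha(j,v)\,\widetilde y_{-\alpha}(-jv^2/\Delta),
\]
in which the outer parameters are radical and $\ell_\alpha(j,v)\in L_R^1$. The conjugate of the middle factor $L_R^1$ by $\widetilde y_{-\alpha}(v)$ is handled by Lemma~\ref{lem:levi}: for $l\in L_R^1$ we have $\widetilde y_{-\alpha}(v)\,l\,\widetilde y_{-\alpha}(-v)=l\,\widetilde y_{-\alpha}((m-1)v)$ with $m\equiv1$, a radical negative-root element. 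Finally $\widetilde y_{-\alpha}(v)$ normalizes $\U_{-C}(J)$. We then collect: the stray $\widetilde y_{-\alpha}(\text{radical})$ factors must be moved rightward past elements of $\U_{C\setminus\alpha}(J)$ and past $L^1_R$. Moving past $L^1_R$ is harmless by Lemma~\ref{lem:levi}. Moving past $\U_{C\setminus\alpha}(J)$ keeps that group stable, by $P_\alpha$-normality.

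\emph{Step 3 (the main obstacle).} The difficulty is the ordering: $\U_{C\setminus\alpha}(J)$ and $\widetilde y_\alpha(J)$ interleave with the new $\widetilde y_{-\alpha}$ factors, so we need the rank-one exchange \eqref{eq:cover-exchange} to swap a radical $\widetilde y_{-\alpha}$ past a radical $\widetilde y_\alpha$ repeatedly. Each swap produces only radical outputs and an $L^1_R$ factor, which is then pushed using Lemma~\ref{lem:levi}. I expect the delicate point to be verifying that this rearrangement terminates in the normal form. I would handle it by proving first the rank-one case $P_\alpha\cap D_R=\widetilde y_\alpha(J)(L^1_R\cap P_\alpha)\widetilde y_{-\alpha}(J)$, using Lemma~\ref{lem:wg} together with \eqref{eq:cover-exchange}. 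Then I would use $P_\alpha$-normality of $\U_{C\setminus\alpha}(J)$ to reduce the general swap to this rank-one statement. Positive simple roots are treated symmetrically with $-C$, which completes normality and hence the equality.
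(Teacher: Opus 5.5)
Your argument is correct, but it is organized differently from the paper's. The paper first shows that $B_C=\U_C(J)L_R^1\U_{-C}(J)$ does not depend on the chamber. For a simple $\gamma\in C$ it rewrites $B_C=V^+\widetilde y_\gamma(J_\gamma)L_R^1\widetilde y_{-\gamma}(J_\gamma)V^-$ and swaps the two middle rank-one factors with \eqref{eq:cover-exchange}, obtaining $B_C=B_{s_\gamma C}$. Connectivity of the chamber graph then makes all $B_C$ one set. Chamber independence gives left stability under every radical root, since each lies in some chamber, and hence the group property. Normality is then checked on generators: nonopposite relations with radical corrections, \eqref{eq:cover-radical-conjugation} for opposite roots, and the toral action on $L_R^1$.

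You instead fix one chamber and prove conjugation invariance of $X$ directly under the simple signed root subgroups. You use the $P_\alpha$-stability of the sector group $\U_{C\setminus\{\alpha\}}$ together with \eqref{eq:cover-radical-conjugation}. The equality of the two orders is then an output ($X=D_R=X^{-1}$) rather than an input. This avoids the chamber graph and any inspection of commutator coefficients. The paper's route gives chamber independence and the group property before normality; yours proves normality first.

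A few steps in your version should be made explicit.
\begin{itemize}
\item You verify invariance only under $\widetilde y_{\pm\alpha}$ with $\alpha$ simple, so you need these four subgroups to generate $\Scover_R$; generation by $\U_{\pm C}$, which is what you cite, is not the statement you use. It does hold. For $\beta\ne\pm\alpha$, the element $\widetilde y_\alpha(1)\widetilde y_{-\alpha}(-1)\widetilde y_\alpha(1)\in P_\alpha$ conjugates $\widetilde y_\beta(t)$ inside a lifted sector. Unipotent injectivity then identifies the conjugate with $\widetilde y_{s_\alpha\beta}(\pm t)$, and \eqref{eq:weyl-table} reaches every root.
\item The positive case is not literally symmetric, because $X$ is not symmetric under $C\leftrightarrow-C$. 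It follows from the negative case applied to $-C$: inversion commutes with conjugation, and $X^{-1}=\U_{-C}(J)L_R^1\U_C(J)$ is the corresponding set for $-C$.
\item For a radical $\widetilde y_\gamma(j)$ with $\gamma\in-C$, left stability follows from $\widetilde y_\gamma(j)x=(\widetilde y_\gamma(j)x\widetilde y_\gamma(j)^{-1})\widetilde y_\gamma(j)$ together with right stability under $\U_{-C}(J)$. The set $\{g:gX\subseteq X\}$ is then closed under conjugation and contains the radical roots, so it contains $D_R$. This is more precise than saying that conjugation stability gives normality, since $X$ is not yet known to be a subgroup.
\end{itemize}
Finally, the obstacle in your Step~3 does not arise. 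With $\widetilde y_\alpha(J_\alpha)$ placed first, as in your Step~2, the stray radical $\widetilde y_{-\alpha}$ factor stands to the right of $\widetilde y_\alpha(j/\Delta)\ell_\alpha(j,v)$. It only has to cross $\U_{C\setminus\{\alpha\}}(J)$ and $L_R^1$, which you already handled. The rank-one decomposition of $P_\alpha\cap D_R$ from Lemma~\ref{lem:wg} is correct but unnecessary.
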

\begin{proof}
Write $B_C=\U_C(J)L_R^1\U_{-C}(J)\subseteq D_R$.
For a simple root $\gamma\in C$, arbitrary-order unipotent coordinates
give
\[
 B_C=V^+\widetilde y_\gamma(J_\gamma)L_R^1
       \widetilde y_{-\gamma}(J_\gamma)V^-,
 \qquad
 V^\pm=\langle\widetilde y_{\pm\beta}(J_\beta):
                         \beta\in C\setminus\{\gamma\}\rangle.
\]
Move the middle factors by Lemma~\ref{lem:levi} and interchange the
opposite root factors using \eqref{eq:cover-exchange}. All denominators
lie in $1+J_\gamma$, and the new toral words lie in $L_R^1$.
Since $s_\gamma C=(C\setminus\{\gamma\})\cup\{-\gamma\}$, the reverse
interchange gives $B_C=B_{s_\gamma C}$. Connectivity of the chamber
graph makes all $B_C$ one set $B$.

Every radical root subgroup stabilizes $B$ by left multiplication:
choose a chamber containing that root. The same holds for $L_R^1$
by its toral action, and for inverses. Since $1\in B$, the set $B$
is exactly the subgroup generated by these factors.

This subgroup is normal in $\Scover_R$. Nonopposite conjugation of
a radical root gives radical root factors: each correction contains
the radical input, $\rho(J)=J$, and $T(J)\subseteq J_0$.
Opposite conjugation is covered by \eqref{eq:cover-radical-conjugation}.
For $l\in L_R^1$, Lemma~\ref{lem:levi} gives
\[
 \widetilde y_\beta(t)l\widetilde y_\beta(-t)
 =\widetilde y_\beta\bigl((1-m_\beta(q_R(l)))t\bigr)l\in B,
\]
since the parameter is radical. The same arguments apply to inverse
conjugators, proving normality. As $B$ contains every radical root,
Lemma~\ref{lem:cover-reduction} gives $D_R\subseteq B$.
\end{proof}

\subsection{Field comparison and centrality}\label{subsec:field-comparison}
Only the next lemma uses a reductive interpretation over the fixed
field; no such interpretation over $R_0$ is needed.

\begin{lemma}[The field kernel]\label{lem:field-placement}
For a field $k$ of characteristic different from $2,3$, with
$\bar\rho\in\Aut(k)$ and $\bar\rho^3=\id$, one has
$K_k\subseteq\Llift_k$.
\end{lemma}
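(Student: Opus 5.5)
We reduce the statement to a known structural fact over fields: the kernel of the graded cover $\Scover_k\to E_k$ is central and lies in the toral part, as in the field comparison of \cite{Stavrova}. Over a field $k$ with $\operatorname{char}k\ne2,3$, the group $E_k=G_Q(k)^+$ is the elementary subgroup of a quasi-split reductive group of relative rank two. There are two cases: ${}^3D_4$ when $\bar\rho$ has order three, and split $D_4$ with its $G_2$-fixed points when $\bar\rho=\id$; in the latter case $k^{\bar\rho}=k$. By Lemma~\ref{lem:borel}, $\Scover_k$ is the group presented by root additivity and the nonopposite commutator relations. This is exactly the presentation whose comparison with the Steinberg-type group is treated in \cite{EJK} and \cite{Stavrova}. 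The plan is to show that every element of $K_k$ can be written as a product of toral rank-one words, i.e.\ of elements of $P_\gamma\cap q_k^{-1}(H_k)$.

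The first step is a Bruhat normal form in the cover. Fix the chamber $C$ of positive roots. Using the exchange identity \eqref{eq:cover-exchange}, Lemma~\ref{lem:levi} for $\Llift_k$, and the lifted Weyl representatives $\widetilde w_\gamma(1)=\widetilde y_\gamma(1)\widetilde y_{-\gamma}(-1)\widetilde y_\gamma(1)$, we show that the set $\bigcup_w \U_C\,\widetilde n_w\,\Llift_k\,\U_C$ is stable under left multiplication by every root generator, and hence equals $\Scover_k$. Here $\widetilde n_w$ runs over fixed lifted Tits words for $w\in W(G_2)$. Over a field every nonzero parameter is a unit, so $y_{-\gamma}(v)$ with $v\ne0$ can always be rewritten as $\widetilde y_\gamma(v^{-1})\widetilde w_\gamma(-v^{-1})\widetilde y_\gamma(v^{-1})$. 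The left multiplication step is then the usual rank-one computation, carried out inside $P_\gamma$ together with the lifted sector normalization from the proof of Lemma~\ref{lem:levi}.

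The second step uses the normal form. Take $x\in K_k$ and write $x=u\,\widetilde n_w\,l\,u'$. Its image $q_k(x)=1$ then has concrete Bruhat decomposition $q_k(u)n_wq_k(l)q_k(u')$. Uniqueness of the concrete Bruhat decomposition, from the proof of Proposition~\ref{prop:factorization}, forces $w=1$. With $w=1$ the middle factor of $x$ lies in $\Llift_k$, and uniqueness of the $U^+HU^+$-type factorization forces $q_k(u)q_k(u')^{\ldots}$ to be trivial in a form that, by injectivity of $q_k$ on $\U_C$ (Lemma~\ref{lem:borel}) and $U^+\cap H=1$, gives $x=l\,u''$ with $u''\in\U_C$ mapping to $1$. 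Hence $u''=1$ and $x\in\Llift_k$.

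The main obstacle is the first step, namely proving closure of the Bruhat set under left multiplication by the negative simple roots. This requires conjugation formulas for the lifted Tits words $\widetilde n_w$ on lifted root subgroups. Such formulas are not given by Lemma~\ref{lem:levi}, which only treats toral elements. I would first try to obtain them from the graded automorphisms realizing Weyl conjugation on the cover \cite[Lemma~7.1]{EJK}: the action of $\widetilde n_w$ agrees with such an automorphism up to an element of $\Llift_k$, which the normal form absorbs. If that fails, I would instead invoke the field comparison of \cite{Stavrova} directly. That result identifies the relative Steinberg-type group over $k$ with a central extension whose kernel is generated by Steinberg symbols, and each symbol is a product of $\ell_\gamma(u,v)$ terms, which lie in $\Llift_k$.
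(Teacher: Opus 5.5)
Your route, a Bruhat normal form in $\Scover_k$ followed by uniqueness of the concrete Bruhat decomposition, is viable. Step~2 goes through as you indicate once Step~1 is available: $x\in\U_C\Llift_k\U_C=\Llift_k\U_C$, and then $U^+\cap H_k=1$ together with injectivity of $q_k$ on $\U_C$ gives $x\in\Llift_k$. But Step~1 has two holes.

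\emph{The Weyl formulas.} This is the hole you flag. Your proposed fix, agreement with a graded automorphism ``up to an element of $\Llift_k$'', is not an argument. Closure needs exact formulas such as $\widetilde n_\alpha\U_{C\setminus\{\alpha\}}\widetilde n_\alpha^{-1}=\U_{C\setminus\{\alpha\}}$. They follow by rerunning the proof of Lemma~\ref{lem:levi} with $\widetilde n_\alpha\in P_\alpha$ in place of a toral word. For $\beta$ noncollinear with $\alpha$, both $\widetilde n_\alpha\widetilde y_\beta(t)\widetilde n_\alpha^{-1}$ and $\widetilde y_{s_\alpha\beta}(\varepsilon t)$ lie in the lifted sector and have the same concrete image. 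For $\beta=\pm\alpha$, the letters of an off-axis word for $\widetilde y_\beta(t)$ lie in one $\U_{C'}$. Their conjugates then lie in $\U_{s_\alpha C'}$, and injectivity of $q_k$ there gives $\widetilde y_{-\beta}(\varepsilon t)$.

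\emph{The braid relation.} This hole you do not mention. With fixed lifts $\widetilde n_w$, closure under $\widetilde n_\alpha$ requires $\widetilde n_\alpha\widetilde n_w\in\widetilde n_{s_\alpha w}\Llift_k$, i.e.\ the Coxeter relations modulo $\Llift_k$. The longest element of $W(G_2)$ has two reduced words, so you need $(\widetilde n_a\widetilde n_b)^3\in(\widetilde n_b\widetilde n_a)^3\Llift_k$. Equivalently, Step~2 needs $\ker(\langle\widetilde n_a,\widetilde n_b,\Llift_k\rangle\to W)=\Llift_k$. This cannot be read off from $q_k$: a priori the discrepancy is only known to lie in $K_k$, which is the group you are trying to control.

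It does hold. By the exact formulas, $\Llift_k$ is normalized by the $\widetilde n_\alpha$, and it contains the rank-one toral words $\widetilde n_\alpha^{\pm2}$. Hence $P=\widetilde n_a\widetilde n_b\widetilde n_a\widetilde n_b\widetilde n_a\in\widetilde w_f^{\pm1}\Llift_k$, because $s_as_b(a)=f$; here $\widetilde w_f=\widetilde y_f(1)\widetilde y_{-f}(-1)\widetilde y_f(1)$. Now $(\widetilde n_a\widetilde n_b)^3=P\widetilde n_b$ and $(\widetilde n_b\widetilde n_a)^3=\widetilde n_bP$. Since $b$ and $f$ are strongly orthogonal, $\widetilde n_b$ commutes with $\widetilde w_f$ in $\Scover_k$, so these two elements agree modulo $\Llift_k$.

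\emph{Comparison with the paper.} The paper avoids any Bruhat analysis in the cover. It realizes $E_k$ as $E_P(F)$ for a simply connected quasi-split group $\mathbf H$ over $F=k^{\bar\rho}$. This uses descent of $\operatorname{Spin}_8$ when $\bar\rho$ has order three, and the fixed-pinning results of \cite{ALRR}, giving split $G_2$, when $\bar\rho=\id$. It then shows that $\operatorname{Spin}_8(k)\to\mathbf G_Q(k)$ is injective on $\mathbf H(F)$, because $\tau$ cycles the nonidentity points of $\mu_2^2$. It identifies $\Scover_k$ and $\Llift_k$ with $\operatorname{St}_P(F)$ and $\widetilde L_P(F)$, and applies \cite[Lemma~3.15]{Stavrova} over $F$, viewed as a local ring with zero maximal ideal.

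Your fallback gestures at this but omits exactly these identifications, in particular simple connectedness. It also misdescribes the cited input: no symbol description is used. The paper's route is short and places the field case inside a general theory, at the cost of external structure results. Your route, once completed, would be self-contained. It uses only Lemmas~\ref{lem:borel} and~\ref{lem:levi} and the concrete Bruhat uniqueness from Proposition~\ref{prop:factorization}, and needs neither $\mathbf H$ nor the injectivity of the isogeny.
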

\begin{proof}
Put $F=k^{\bar\rho}$. If $\bar\rho$ has order three, descent of the
pinned $\operatorname{Spin}_{8,k}$ along $\tau\bar\rho$ over the cyclic
cubic Galois extension $k/F$ gives a simply connected quasi-split
group $\mathbf H/F$. Its relative root system is $G_2$, with short
parameters $k$ and long parameters $F$; its root maps become our
orbit products over $k$.

If $\bar\rho=\id$, take the pinned fixed group
$\mathbf H=\operatorname{Spin}_{8,k}^{\tau}$. The fixed-pinning results
\cite[Theorem~5.1 and Corollary~5.8]{ALRR} make it connected, smooth,
and split reductive: the coinvariants of the permuted fundamental
weight basis are free, and no exceptional even-$A$ component occurs.
The fixed-root construction \cite[\S4.3.1 and Proposition~6.1]{ALRR}
gives simple roots $(2,-1)$ and $(-3,2)$ in the coinvariant basis
of outer and middle weights. Thus $\mathbf H$ is split $G_2$,
both simply connected and adjoint, since this matrix has determinant
one. The adjoint $D_4$ fixed group has the same pinned root datum.

In either case, the central isogeny
$\operatorname{Spin}_8(k)\to\mathbf G_Q(k)$ is injective on
$\mathbf H(F)$: its kernel $\mu_2^2(k)$ has no nonidentity
$\tau\bar\rho$-fixed point, since $\tau$ cycles its three nonidentity
points and $\bar\rho$ fixes them. For the standard minimal parabolic
$P$ of $\mathbf H$, the elementary subgroup $E_P(F)$ therefore maps
isomorphically onto $E_k$, root by root. The inverse image of the
absolute adjoint torus is the absolute Spin torus; intersecting with
$\mathbf H(F)$ gives its torus Levi subgroup $T_F(F)$.
Thus the rank-one toral-image conditions agree.

The relative Steinberg presentation
\cite[Definition~3.1]{Stavrova} consists here of additive root laws
and the same nonopposite relations. In the nonsplit case this follows
after extension to $k$ by absolute unipotent coordinates; in the split
case it follows from the pinned orbit maps. Lemma~\ref{lem:borel}
identifies $\Scover_k$ with $\operatorname{St}_P(F)$ and
$\Llift_k$ with $\widetilde L_P(F)$ of
\cite[Definition~3.9]{Stavrova}. Apply
\cite[Lemma~3.15]{Stavrova} over the field $F$, viewed as a local
ring with zero maximal ideal. The group is simply connected and
simple of relative rank two; the congruence factor is trivial,
leaving
$\ker(\operatorname{St}_P(F)\to E_P(F))\subseteq\widetilde L_P(F)$.
No lower bound on $|F|$ or perfectness assumption is required.
\end{proof}

\begin{theorem}[Centrality]\label{thm:centrality}
For every local coefficient pair,
\[
 K_R=Z(\Scover_R),\qquad K_R\subseteq\Llift_R,\qquad
 \Scover_R/Z(\Scover_R)\simeq E_R.
\]
\end{theorem}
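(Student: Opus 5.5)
We first show $K_R\subseteq\Llift_R$, then deduce centrality from the toral action and the triviality of the frame centralizer.

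\emph{Placement of the kernel.} Let $x\in K_R$. Then $r_R(x)\in K_k$, because $q_kr_R=\pi_Rq_R$. Lemma~\ref{lem:field-placement} gives $r_R(x)\in\Llift_k$. Lemma~\ref{lem:wg} gives $r_R(\Llift_R)=\Llift_k$, so we may choose $l\in\Llift_R$ with $r_R(l)=r_R(x)$. Then $xl^{-1}\in D_R$.

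Fix a chamber $C$. Proposition~\ref{prop:congruence} writes
\[
 xl^{-1}=u_+\,l_1\,u_-,\qquad
 u_+\in\U_C(J),\quad l_1\in L_R^1,\quad u_-\in\U_{-C}(J).
\]
Hence $x=u_+(l_1l)u_-'$, where $u_-'=l^{-1}u_-l\in\U_{-C}(J)$ by Lemma~\ref{lem:levi}.

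Apply $q_R$. The concrete image $q_R(l_1l)$ lies in $H_R$, by construction of $\Llift_R$. Since $q_R(x)=1$, we get
\[
 q_R(u_+)\,q_R(l_1l)\,q_R(u_-')=1.
\]
The element $1$ has residue in the open cell, so uniqueness in Lemma~\ref{lem:local-cell} forces $q_R(u_+)=q_R(u_-')=1$. Lemma~\ref{lem:borel} says $q_R$ is injective on $\U_{\pm C}$, so $u_+=u_-'=1$. Therefore $x=l_1l\in\Llift_R$.

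\emph{Centrality.} Let $x\in K_R$ and put $h=q_R(x)=1$. Lemma~\ref{lem:levi} gives
\[
 x\,\widetilde y_\gamma(t)\,x^{-1}=\widetilde y_\gamma(m_\gamma(1)t)=\widetilde y_\gamma(t)
\]
for every root $\gamma$ and every $t$. The root generators generate $\Scover_R$, so $x$ is central and $K_R\subseteq Z(\Scover_R)$.

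Conversely, $q_R$ is surjective, so it maps $Z(\Scover_R)$ into $Z(E_R)$. Lemma~\ref{lem:ambient-center} gives $Z(E_R)=1$. Hence $Z(\Scover_R)\subseteq K_R$, and the two subgroups are equal. The isomorphism $\Scover_R/Z(\Scover_R)\simeq E_R$ follows from the surjectivity of $q_R$.

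\emph{Main obstacle.} The only delicate step is the one that uses the local open cell. It needs $q_R(\Llift_R)\subseteq H_R$, which holds because each generator of $\Llift_R$ is by definition a rank-one word with toral image. It also needs $q_R$ to be injective on the radical unipotent factors $\U_{\pm C}(J)$. Both facts are available from Lemmas~\ref{lem:local-cell} and~\ref{lem:borel}, so the substantive input is the field case, Lemma~\ref{lem:field-placement}, together with rank-one lifting.
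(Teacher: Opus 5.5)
Your proof is correct and follows the paper's argument step for step. You lift $r_R(x)$ into $\Llift_R$ via Lemmas~\ref{lem:field-placement} and~\ref{lem:wg}, decompose the congruence remainder by Proposition~\ref{prop:congruence}, and use open-cell uniqueness with unipotent injectivity to force $x\in\Llift_R$; centrality then follows from the toral action, and the reverse inclusion from $Z(E_R)=1$. Take $C$ to be the standard positive chamber, as the paper implicitly does, so that Lemma~\ref{lem:local-cell} applies verbatim.
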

\begin{proof}
For $z\in K_R$, reduction gives $r_R(z)\in K_k$.
Lemmas~\ref{lem:field-placement} and~\ref{lem:wg} supply
$l\in\Llift_R$ with $r_R(l)=r_R(z)$. Thus $l^{-1}z\in D_R$.
Fix opposite chambers and write their lifted groups as $\U_\pm$.
Proposition~\ref{prop:congruence} and the toral action give
\[
 z=u_+hu_-,\qquad u_\pm\in\U_\pm(J),\quad h\in\Llift_R.
\]
Under $q_R$, this is an absolute open-cell decomposition of the
identity. Uniqueness forces $q_R(u_\pm)=q_R(h)=1$.
Unipotent injectivity gives $u_+=u_-=1$, so $z=h\in\Llift_R$.
Lemma~\ref{lem:levi} then makes $z$ commute with every lifted root,
since $m_\gamma(q_R(z))=1$. Hence $K_R\subseteq Z(\Scover_R)$.
Conversely, surjectivity of $q_R$ sends the center into $Z(E_R)=1$
by Lemma~\ref{lem:ambient-center}, proving equality.
\end{proof}

\subsection{Coefficient substitutions}\label{sec:converse}
A unital ring homomorphism $\eta:S\to A$ is \emph{trace-preserving}
if $\eta T_S=T_A\eta$. We first construct its map on graded covers,
then show when it descends to elementary groups.

\begin{lemma}\label{lem:trace-polynomials}
If $2,3\in A^\times$, a trace-preserving map $\eta:S\to A$ preserves
$\#$, $N$, and $\times$, and sends $S^{\rho_S}$ into $A^{\rho_A}$.
\end{lemma}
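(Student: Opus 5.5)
The plan is to express each of the derived operations as a polynomial in traces of products, with coefficients in $\mathbb Z[1/6]$, and then use that $\eta$ is a unital ring homomorphism commuting with $T$. The basic identities, all valid in any coefficient pair, are
\[
 T(t^2)=t^2+\rho(t)^2+\rho^2(t)^2,\qquad
 T(t)^2-T(t^2)=2\bigl(t\rho(t)+t\rho^2(t)+\rho(t)\rho^2(t)\bigr)=2T(t^\#).
\]
Together with $t^\#=\rho(t)\rho^2(t)$, the second one gives
\[
 t^\#=t^2-T(t)\,t+\tfrac12\bigl(T(t)^2-T(t^2)\bigr).
\]
To check this, use $T(t)t=t^2+t\rho(t)+t\rho^2(t)$. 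Then $t^2-T(t)t=-t\rho(t)-t\rho^2(t)$. Adding $T(t^\#)=t\rho(t)+t\rho^2(t)+\rho(t)\rho^2(t)$ leaves exactly $\rho(t)\rho^2(t)$. The right side involves only ring operations, $T$, and division by $2$. Since $\eta$ preserves all three (it is unital, so $\eta(1/2)=1/2$ whenever $2\in S^\times$), we obtain $\eta(t^\#)=\eta(t)^\#$.

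One caveat: $S$ need not contain $1/2$. In that case I would state the identity as
\[
 2t^\#=2t^2-2T(t)t+T(t)^2-T(t^2)
\]
in $S$. Applying $\eta$ and cancelling $2$ in $A$ then gives $2\eta(t^\#)=2\eta(t)^\#$, hence equality.

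The remaining operations follow from this. The cross product $t\times u=(t+u)^\#-t^\#-u^\#$ is preserved because $\eta$ is additive and preserves $\#$. The norm $N(t)=tt^\#$ is preserved by multiplicativity. Only the factor $2$ needs inverting, so the hypothesis $3\in A^\times$ enters only in the fixed-point statement.

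For the fixed points, take $v\in S^{\rho_S}$. Then $T_S(v)=3v$, so
\[
 3\eta(v)=\eta(T_S(v))=T_A(\eta(v))\in A^{\rho_A},
\]
because $T_A$ takes values in $A_0$. Cancelling $3$, which is possible since $3\in A^\times$ and $A^{\rho_A}$ is closed under multiplication by the fixed unit $3^{-1}$, gives $\eta(v)\in A^{\rho_A}$.

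The only point I expect to need care is verifying the polynomial identity for $t^\#$ and doing the cancellations in $A$ rather than dividing in $S$. Everything else is formal.
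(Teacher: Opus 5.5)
Your proposal is correct and matches the paper's proof: the integral identity $2t^\#=T(t)^2-T(t^2)-2tT(t)+2t^2$ with cancellation of $2$ in $A$, multiplicativity and polarization for $N$ and $\times$, and $T_A(\eta(v))=3\eta(v)$ followed by cancelling $3$ for fixed elements. Like the paper, you use no division in $S$.
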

\begin{proof}
The integral identity
\[
 2t^\#=T(t)^2-T(t^2)-2tT(t)+2t^2
\]
gives $\eta(t^\#)=\eta(t)^\#$ after applying $\eta$ and cancelling
$2$ in $A$. Multiplication gives preservation of $N$, and polarization
gives $\times$. If $v\in S^{\rho_S}$, then
$T_A(\eta(v))=3\eta(v)$. Applying $\rho_A$ and cancelling $3$ gives
$\rho_A(\eta(v))=\eta(v)$. No division in $S$ is used.
\end{proof}

Every coefficient in \eqref{eq:ab}--\eqref{eq:cd} is formed from
these operations and ring operations; Weyl transport adds only signs.
Thus $\eta$ preserves all cover relations, and Lemma~\ref{lem:borel}
gives
\[
 F_\eta:\Scover_S\to\Scover_A,\qquad
 F_\eta(\widetilde y_{\gamma,S}(t))
       =\widetilde y_{\gamma,A}(\eta(t)).
\]
These maps respect composition. They descend to a unique
$\phi_\eta:E_S\to E_A$ precisely when $F_\eta(K_S)\subseteq K_A$.

\begin{theorem}[Coefficient automorphisms]\label{thm:local-converse}
For a local pair $(R,\rho)$, every $\eta\in\Aut(R)$ with $\eta T=T\eta$
induces an automorphism $\phi_\eta$ of $E_R$ by
$y_\gamma(t)\mapsto y_\gamma(\eta(t))$.
\end{theorem}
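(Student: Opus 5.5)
The plan is to descend the cover map $F_\eta:\Scover_R\to\Scover_R$ of the preceding discussion through $q_R$, and then to check bijectivity using $\eta^{-1}$.

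First, $\eta^{-1}$ also commutes with $T$: applying $\eta^{-1}$ on both sides of $\eta T=T\eta$ gives $T\eta^{-1}=\eta^{-1}T$. Since $2,3\in R^\times$, Lemma~\ref{lem:trace-polynomials} shows that both $\eta$ and $\eta^{-1}$ preserve $\#$, $N$, $\times$ and the fixed ring $R_0$. Each therefore restricts to a bijection of $R_\gamma$ for every root $\gamma$. As a result, $F_\eta$ and $F_{\eta^{-1}}$ are defined on $\Scover_R$. They respect composition, and both composites fix every lifted root generator, so $F_\eta$ is an automorphism of $\Scover_R$ with inverse $F_{\eta^{-1}}$.

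The key step is showing $F_\eta(K_R)\subseteq K_R$. By Theorem~\ref{thm:centrality}, $K_R=Z(\Scover_R)$. Any group automorphism preserves the center, so $F_\eta(K_R)=K_R$. Hence $F_\eta$ descends to an automorphism $\phi_\eta$ of $\Scover_R/K_R\simeq E_R$. Because $q_R(\widetilde y_\gamma(t))=y_\gamma(t)$, this automorphism sends $y_\gamma(t)$ to $y_\gamma(\eta(t))$. The same argument gives $\phi_{\eta^{-1}}$ as its inverse. Uniqueness holds because root elements generate $E_R$.

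All the real work lies in centrality of the local cover kernel. This is exactly why Theorem~\ref{thm:centrality} was proved beforehand. Without it, one would need to show directly that $\eta$ preserves the relations among opposite root elements, which is the step I expect to be the main obstacle. Given that theorem, the only remaining points are small checks. First, the cover relations involve only ring operations together with $T,\#,N,\times$, with Weyl transport adding only signs, so they are preserved by $\eta$. Second, no assumption that $\eta$ normalizes $\langle\rho\rangle$ is needed, since only $T$ enters the coefficients.
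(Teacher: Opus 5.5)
Your proposal is correct and follows essentially the same route as the paper. The paper likewise uses that $\eta^{-1}$ preserves trace, so $F_{\eta^{-1}}$ inverts $F_\eta$ on $\Scover_R$. Since $K_R=Z(\Scover_R)$ is characteristic by Theorem~\ref{thm:centrality}, both maps descend to mutually inverse automorphisms of $E_R$.
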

\begin{proof}
The inverse $\eta^{-1}$ preserves trace and induces the inverse of
$F_\eta$. By Theorem~\ref{thm:centrality},
$K_R=Z(\Scover_R)$ is characteristic, so both maps descend to $E_R$.
\end{proof}

\begin{proposition}[A local source]\label{prop:local-source}
Let $(S,\rho_S)$ be a local coefficient pair and $(A,\rho_A)$ any
coefficient pair with $2,3\in A^\times$. Every unital trace-preserving
map $\eta:S\to A$ induces the root substitution
$\phi_\eta:E_S\to E_A$.
\end{proposition}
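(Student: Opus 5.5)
The plan is to build $\phi_\eta$ from the cover map and show it descends. By Lemma~\ref{lem:trace-polynomials} and Lemma~\ref{lem:borel} we already have $F_\eta:\Scover_S\to\Scover_A$ with $F_\eta(\widetilde y_{\gamma,S}(t))=\widetilde y_{\gamma,A}(\eta(t))$. It descends to $E_S\to E_A$ exactly when $F_\eta(K_S)\subseteq K_A$, and uniqueness is then automatic, since root elements generate $E_S$. The difficulty is that $A$ need not be local, so $K_A$ is not known to be central. The source $S$ is local, however, so Theorem~\ref{thm:centrality} gives $K_S=Z(\Scover_S)$. The argument should therefore use centrality on the source side together with triviality of the frame centralizer on the target side (Lemma~\ref{lem:ambient-center}).

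Fix $z\in K_S$ and put $x=q_A(F_\eta(z))\in E_A$; we must show $x=1$.

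\emph{Step 1: $x$ centralizes the target frame.} Since $z$ is central in $\Scover_S$, it commutes with each lifted constant $\widetilde y_{\gamma,S}(1)$. Applying $F_\eta$ shows that $F_\eta(z)$ commutes with $F_\eta(\widetilde y_{\gamma,S}(1))=\widetilde y_{\gamma,A}(1)$, because $\eta$ is unital. Projecting by $q_A$, the element $x$ commutes with $y_{\gamma,A}(1)$ for every $\gamma\in G_2$, so $x$ centralizes $\mathcal F_A$.

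\emph{Step 2: conclude $x=1$.} Lemma~\ref{lem:ambient-center} applies to every nonzero commutative ring $B$ with $2\in B^\times$, not only local ones, so $\Cent_{\mathbf G_Q(A)}(\mathcal F_A)=1$. Since $x\in E_A\subseteq\mathbf G_Q(A)$, we get $x=1$. Hence $F_\eta(K_S)\subseteq K_A$, and $F_\eta$ induces $\phi_\eta$ with $\phi_\eta(y_{\gamma,S}(t))=y_{\gamma,A}(\eta(t))$.

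The only step needing care is the claim that Lemma~\ref{lem:ambient-center} holds over nonlocal $A$. Its proof lifts the adjoint point to a proper similitude after a faithfully flat base change and then descends, which works for arbitrary $A$ with $2$ invertible. We also use that the fixed-parameter constants are exactly the standard ones; this follows because $\eta(1)=1$ and Lemma~\ref{lem:trace-polynomials} gives $\eta(S_0)\subseteq A^{\rho_A}$, so long-root parameters stay in range. Compatibility with composition is inherited from the covers.
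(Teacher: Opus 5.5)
Your proposal is correct and follows the paper's own argument. Centrality of $K_S$ (Theorem~\ref{thm:centrality}, which needs $S$ local) and $\eta(1)=1$ make $F_\eta(z)$ commute with the lifted parameter-one target roots, and Lemma~\ref{lem:ambient-center}, valid for any nonzero $A$ with $2$ invertible, forces its concrete image to be trivial; as in the paper, neither centrality of $K_A$ nor surjectivity of $F_\eta$ is used.
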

\begin{proof}
For $z\in K_S$, source centrality makes $F_\eta(z)$ commute with every
lifted target parameter-one root, since $\eta(1)=1$.
Its concrete image centralizes the target frame and is therefore
trivial by Lemma~\ref{lem:ambient-center}.
Hence $F_\eta(K_S)\subseteq K_A$. Neither target centrality nor
surjectivity of $F_\eta$ is used.
\end{proof}

\begin{lemma}[Orientation over a domain]\label{lem:orientation}
If $A$ is a domain with $2\ne0$, every unital trace-preserving map
$\eta:S\to A$ satisfies
$\eta\rho_S=\rho_A\eta$ or $\eta\rho_S=\rho_A^{-1}\eta$.
In particular, $\ker\eta$ is $\rho_S$-stable.
\end{lemma}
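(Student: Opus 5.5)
The plan is to show pointwise that $\eta(\rho_S(t))\in\{\rho_A(\eta(t)),\rho_A^{-1}(\eta(t))\}$, and then upgrade this to a global statement by an additive-subgroup argument. Since $A$ is only a domain with $2\ne0$, I first pass to the fraction field $K$ of $A$. The automorphism $\rho_A$ extends uniquely to $K$ with $\rho_K^3=\id$, and $2\in K^\times$. Composing $\eta$ with $A\hookrightarrow K$ gives a unital map $S\to K$ that is still trace-preserving. Any identity proved for it in $K$ can be checked in $A$, by injectivity.

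\emph{Pointwise step.} Fix $t\in S$ and put
\[
 x=\eta(t),\qquad y=\eta(\rho_S(t)),\qquad z=\eta(\rho_S^2(t)),\qquad
 x'=\rho_A(x),\qquad x''=\rho_A^2(x).
\]
Trace preservation gives $x+y+z=\eta(T_S(t))=T_A(x)=x+x'+x''$, hence $y+z=x'+x''$. For the product I use the integral identity from the proof of Lemma~\ref{lem:trace-polynomials},
\[
 2t^\#=T(t)^2-T(t^2)-2tT(t)+2t^2 .
\]
Its right side is built from ring operations and $T$, so applying $\eta$ gives $2\eta(t^\#)=2\eta(t)^\#$ in $K$. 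Cancelling $2$ yields $yz=\eta(\rho_S(t)\rho_S^2(t))=x'x''$. Thus
\[
 (X-y)(X-z)=(X-x')(X-x'')\quad\text{in } A[X].
\]
Substituting $X=y$ gives $(y-x')(y-x'')=0$. Because $A$ is a domain, $y=x'$ or $y=x''=\rho_A^{-1}(x)$.

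\emph{Globalization.} Put
\[
 S_+=\{t\in S:\eta\rho_S(t)=\rho_A\eta(t)\},\qquad
 S_-=\{t\in S:\eta\rho_S(t)=\rho_A^{-1}\eta(t)\}.
\]
Both are additive subgroups of $S$, since all maps involved are additive. By the pointwise step, $S=S_+\cup S_-$. A group is never the union of two proper subgroups: if $a\in S_+\setminus S_-$ and $b\in S_-\setminus S_+$, then $a+b$ lies in neither subgroup. Hence $S_+=S$ or $S_-=S$, which is the asserted orientation. Stability of $\ker\eta$ follows at once: if $\eta(t)=0$, then $\eta(\rho_S(t))=\rho_A^{\pm1}(0)=0$.

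The only delicate point is obtaining the product relation $yz=x'x''$ without $1/2$ in $A$ or in $S$. This is handled by the fraction-field extension together with the integral identity for $2t^\#$. The rest is Vieta's relations in a domain and the elementary two-subgroup lemma.
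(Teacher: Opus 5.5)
Your proof is correct and follows essentially the same route as the paper. Trace preservation on $t$ and $t^2$ gives equal sums and, after cancelling $2$, equal products; your $2t^\#$ identity encodes exactly this. The domain property then forces $\eta(\rho_S(t))\in\{\rho_A\eta(t),\rho_A^{-1}\eta(t)\}$, and the two-subgroup lemma globalizes. The passage to the fraction field is harmless but unnecessary, since $2\ne0$ in a domain already allows cancelling $2$ in $A$.
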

\begin{proof}
For $t\in S$, put
$a=\eta(\rho_S(t))$, $b=\eta(\rho_S^2(t))$,
$c=\rho_A(\eta(t))$, $d=\rho_A^2(\eta(t))$.
Trace preservation on $t,t^2$ gives
$a+b=c+d$ and $a^2+b^2=c^2+d^2$. Cancelling $2$ yields $ab=cd$,
hence $(a-c)(a-d)=0$. Since $A$ is a domain, either $a=c$ or $a=d$.
The two solution sets are additive subgroups covering $S$.
A group cannot be the union of two proper subgroups, so one orientation
holds on all of $S$; either makes $\ker\eta$ stable under $\rho_S$.
\end{proof}

\begin{theorem}[A local target]\label{thm:arbitrary-converse}
For an arbitrary coefficient pair $(S,\rho_S)$ and a local pair
$(A,\rho_A)$, every unital trace-preserving map $\eta:S\to A$ induces
a unique homomorphism $\phi_\eta:E_S\to E_A$ with the coherent root
rule. The source need not contain $1/2$ or $1/3$.
\end{theorem}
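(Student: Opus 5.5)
The plan is to reduce to a local source, where Proposition~\ref{prop:local-source} applies. The reduction is by localization of $S$ at primes pulled back from the target, and the pieces are glued along the faithful map $E_A\to\prod E_{A_\mathfrak p}$ (or directly using $A$ itself). Uniqueness is immediate: the root elements generate $E_S$, so any homomorphism with the root rule is determined. The issue is existence, namely showing $F_\eta(K_S)\subseteq K_A$ for the cover map $F_\eta:\Scover_S\to\Scover_A$ constructed after Lemma~\ref{lem:trace-polynomials}.

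First I factor $\eta$ through a local ring. Let $\mathfrak m=\Jac(A)$ and $\mathfrak p=\eta^{-1}(\mathfrak m)$, a prime of $S$. By Lemma~\ref{lem:orientation} applied to the composite $S\to A\to k$ (a field, hence a domain with $2\ne0$), $\mathfrak p$ is $\rho_S$-stable. Hence $\rho_S$ extends to the localization $S'=S[\Sigma^{-1}]$, where $\Sigma$ is the multiplicative set of elements $s$ with $N_S(s)=s\rho_S(s)\rho_S^2(s)\notin\mathfrak p$. Stability gives $\Sigma=S\setminus\mathfrak p$, so $S'=S_{\mathfrak p}$, and $\rho_S$ acts on it since $\Sigma$ is $\rho_S$-stable. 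Elements outside $\mathfrak p$ map to units of $A$, so $\eta$ extends uniquely to $\eta':S_\mathfrak p\to A$. This extension is again trace-preserving, because $T(s/\sigma)=T(s\sigma^\#)/N(\sigma)$ and $\eta$ preserves $\#$ and $N$ by Lemma~\ref{lem:trace-polynomials}.

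The obstacle is that $S_\mathfrak p$ need not contain $1/2,1/3$, while the paper's local pairs require $2,3$ invertible. Since $2,3\in A^\times$, however, $2,3\notin\mathfrak p$, so $2,3\in S_\mathfrak p^\times$ automatically. Thus $(S_\mathfrak p,\rho_S)$ is a local coefficient pair, and Proposition~\ref{prop:local-source} gives $\phi_{\eta'}:E_{S_\mathfrak p}\to E_A$. Composing with the natural map $\lambda:E_S\to E_{S_\mathfrak p}$, induced by $\mathbf G_Q$ functoriality on fixed points, I define $\phi_\eta=\phi_{\eta'}\circ\lambda$. On root elements this sends $y_{\gamma,S}(t)$ to $y_{\gamma,A}(\eta(t))$, which is the coherent root rule.

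The remaining step I expect to need care is checking that $\lambda$ respects the root maps with the same signs and powers of $\rho$. This holds because the localization is $\rho$-equivariant and the orbit words of Section~\ref{sec:coordinates} are integral. For long roots, $S_0\to(S_\mathfrak p)^{\rho}$ lands in fixed parameters, and $\eta'$ sends these into $A_0$ by Lemma~\ref{lem:trace-polynomials}. Equivalently, on covers one has $F_\eta=F_{\eta'}\circ F_\lambda$, so $F_\eta(K_S)\subseteq F_{\eta'}(K_{S_\mathfrak p})\subseteq K_A$. No division in $S$ is used, only in $S_\mathfrak p$, and the source may lack $1/2$ and $1/3$ as claimed.
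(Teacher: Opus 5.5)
Your proposal is correct and follows the paper's proof essentially step for step: pass to $\nu:S\to k$, use Lemma~\ref{lem:orientation} to make $\mathfrak p=\eta^{-1}(\Jac(A))$ a $\rho_S$-stable prime avoiding $2$ and $3$, extend $\eta$ to $S_{\mathfrak p}$ with trace preservation via $T(t/s)=T(ts^\#)/N(s)$, apply Proposition~\ref{prop:local-source}, and compose with the localization map $E_S\to E_{S_{\mathfrak p}}$. Your opening remark about gluing along $\prod E_{A_{\mathfrak p}}$ is unnecessary, since $A$ is already local, but the argument never uses it.
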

\begin{proof}
Compose with $A\to k=A/\Jac(A)$ to obtain $\nu:S\to k$.
It preserves trace, so Lemma~\ref{lem:orientation} gives one orientation
for $\nu$. Thus $\mathfrak p=\ker\nu=\eta^{-1}(\Jac(A))$ is a
$\rho_S$-stable prime, and $2,3\notin\mathfrak p$.
The ring $B=S_{\mathfrak p}$ inherits $\rho_S$ and is a local
coefficient pair. Every denominator maps to a unit of $A$, so
$\eta$ extends to $\eta_{\mathfrak p}:B\to A$.

For $s\notin\mathfrak p$, stability gives
$N_S(s)\in S^{\rho_S}\setminus\mathfrak p$, and
\[
 T_B(t/s)=\frac{T_S(ts^\#)}{N_S(s)}.
\]
Lemma~\ref{lem:trace-polynomials}, applied to $\eta$, gives
\[
 \eta_{\mathfrak p}(T_B(t/s))
 =\frac{T_A(\eta(t)\eta(s)^\#)}{N_A(\eta(s))}
 =T_A\bigl(\eta(t)\eta(s)^{-1}\bigr),
\]
since the norm denominator is a fixed unit. Thus $\eta_{\mathfrak p}$
preserves trace. Apply Proposition~\ref{prop:local-source} to $B\to A$
and compose with the equivariant pinned localization $E_S\to E_B$.
The resulting map has the required rule and is unique by root
generation. No centrality of $K_S$ or surjectivity of elementary
localization is used. The orientation concerns $\nu$, not necessarily
$\eta$.
\end{proof}

The equivariant and anti-equivariant pinned constructions of
Section~\ref{sec:residual} agree with these substitutions on root
generators. Lemma~\ref{lem:orientation} therefore also gives a
substitution for a domain target with $2\ne0$, without requiring
$2$ or $3$ invertible. Whenever substitutions exist,
$\phi_\theta\circ\phi_\eta=\phi_{\theta\circ\eta}$, and injectivity
of a short-root map makes $\eta\mapsto\phi_\eta$ injective.
No general converse for two arbitrary nonlocal coefficient rings
is asserted.

Together with Theorem~\ref{thm:relative}, this proves
Theorem~\ref{thm:main-B}.

%-----------------------
%-----------------------
%-----------------------

\medskip
\section{Automorphism classification}\label{sec:classification}

Let $(R,\rho)$ be a local coefficient pair. We prove that the reduction
kernel is the greatest proper normal subgroup, apply the relative
theorem, and pass to the full groups and both forms.

\subsection{The residue kernel}\label{sec:radical}

Put $\pi_R:E_R\to E_k$ and $N_R=\ker\pi_R$.
Throughout this subsection, $H\trianglelefteq E_R$; the fixed torus
is written $T_Q(R)^\delta$.

\begin{lemma}\label{lem:simple-residue}
The map $\pi_R$ is surjective and $E_k$ is nontrivial simple.
\end{lemma}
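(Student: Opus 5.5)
Surjectivity of $\pi_R$ comes first. By Lemma~\ref{lem:local-cell}, reduction is surjective on every relative root parameter group: short parameters lift arbitrarily, and long parameters in $k^{\bar\rho}$ lift by averaging, $T(t)/3\in R_0$. Since $E_k$ is generated by the root elements $y_{\gamma,k}(\bar t)$ with $\bar t\in k_\gamma$, every generator lies in the image of $\pi_R$. Hence $\pi_R$ is surjective. Alternatively, Proposition~\ref{prop:factorization} gives surjectivity of $G_R\to G_k$, and one could restrict, but the root-by-root argument is more direct.

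Nontriviality is immediate, since $y_{b,k}(1)\neq1$ by injectivity of the root maps. For simplicity I would split into the two cases distinguished in Lemma~\ref{lem:field-placement}, writing $F=k^{\bar\rho}$. If $\bar\rho$ has order three, $E_k$ is identified, via the injective central isogeny on fixed points, with the elementary subgroup $E_P(F)$ of the simply connected quasi-split group ${}^3D_4$ over $F$. If $\bar\rho=\id$, it is identified with $E(F)$ for split $G_2$. In both cases the group has relative rank two and is absolutely almost simple. Tits' simplicity theorem, in the form for isotropic simple groups over fields (or, classically, Steinberg's and Ree's results for ${}^3D_4(q)$ and $G_2(F)$ together with the treatment of infinite fields), then says that $E(F)$ modulo its center is simple whenever $|F|\geq4$. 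Since $\operatorname{char}k\neq2,3$, we have $|F|\geq5$, so the small exceptions $G_2(2)$ and ${}^3D_4$ over $\mathbb F_2$ do not arise.

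It remains to see that the center is trivial. Lemma~\ref{lem:ambient-center}, applied with $B=k$, gives $Z(E_k)=1$. So $E_k$ is itself simple.

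The main obstacle is justifying the appeal to Tits simplicity in both cases, which requires two facts.
\begin{enumerate}
\item The identification of $E_k$ with the group $E(F)$ generated by the $F$-rational unipotent radicals must be exact, not just up to isogeny. Lemma~\ref{lem:field-placement} already supplies this, including the check that the kernel $\mu_2^2$ meets the fixed points trivially.
\item The Tits simplicity theorem needs $E(F)$ to be perfect and to carry a BN-pair whose $B$ is solvable and contains no nontrivial normal subgroup.
\end{enumerate}
I would first try to verify the BN-pair hypotheses directly from the relative Tits system. Perfectness follows from the commutator relations \eqref{eq:ab} and \eqref{eq:cb}, which produce every root subgroup as a product of commutators once $2,3$ are units. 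For instance, \eqref{eq:ab} with $v=1$ gives $y_c(t)$ modulo higher roots, and Weyl transport handles the rest. Combined with the trivial center, Tits' theorem then applies.
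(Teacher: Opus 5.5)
Your proposal is correct and follows essentially the paper's route: surjectivity by lifting root parameters via Lemma~\ref{lem:local-cell}, nontriviality from injectivity of a root map, and simplicity from a classical theorem that the field group is simple modulo its center, combined with $Z(E_k)=1$ from Lemma~\ref{lem:ambient-center}. The only difference is the citation. The paper uses Steinberg's Theorem~5 for split $G_2$ and, in the ${}^3D_4$ case, needs only a surjection from Steinberg's $G'_\sigma$ onto $E_k$ together with Theorem~34. You instead use Tits' isotropic simplicity theorem through the isomorphism of Lemma~\ref{lem:field-placement}. That theorem already contains the BN-pair and perfectness input, so your closing plan to verify those hypotheses by hand is unnecessary. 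Also, ${}^3D_4$ over $\mathbb F_2$ is not actually an exceptional case.
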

\begin{proof}
Root parameters lift by Lemma~\ref{lem:local-cell}, proving
surjectivity, and injectivity of the long-root map gives
$y_{a,k}(1)\ne1$.
If $\bar\rho=\id$, Lemma~\ref{lem:field-placement} identifies $E_k$
with split $G_2(k)$. The simplicity theorem
\cite[Theorem~5, p.~47]{Steinberg} applies: the center is trivial
and the small exceptional fields are excluded.

If $\bar\rho$ has order three, the same root comparison gives a
surjection from the universal root-generated triality group
$G'_\sigma=\langle U_\sigma,U^-_\sigma\rangle$ of
\cite[\S11, pp.~177--178]{Steinberg} to $E_k$.
Its central quotient is simple by Theorem~34 on p.~187 of that work,
with no exception of type ${}^3D_4$.
Its center maps into $Z(E_k)=1$ by Lemma~\ref{lem:ambient-center}.
Thus $E_k$ is a nontrivial image of a simple group, and is simple.
\end{proof}

\begin{lemma}[A unit root]\label{lem:unit-generation}
If $H$ contains a short or long root element with a unit parameter,
then $H=E_R$.
\end{lemma}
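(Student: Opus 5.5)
The plan is to reduce to showing that $H$ contains one parameter-one root element of each length, and then use Weyl transport and commutator relations. First take a short or long root element $y_\gamma(\varepsilon)\in H$ with $\varepsilon\in R_\gamma^\times$. The Tits representatives $N_s,N_l$ lie in $E_R$ and $H$ is normal. By \eqref{eq:coherent-weyl}, Weyl transitivity on roots of a given length then puts $y_\beta(\pm\varepsilon)$ in $H$ for every root $\beta$ of the same length as $\gamma$.

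Next, conjugate by torus elements to rescale the parameter. The fixed torus $T_Q(R)^\delta$ normalizes $E_R$ by Proposition~\ref{prop:factorization}, but $H$ is only normal in $E_R$. So I would use the elementary toral elements $h_{\gamma}(v)=w_\gamma(v)w_\gamma(1)^{-1}\in E_R$ from the proof of Proposition~\ref{prop:factorization}. Conjugating $y_\beta(\varepsilon)$ by $h_\beta(v)$ multiplies the parameter by the rank-one multiplier; for a long root this is $v^2$ with $v\in R_0^\times$. That gives only squares, which is not enough in general.

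It is better to use commutators, which produce linear parameters. In the long case, \eqref{eq:ae} gives
\[
 \commR{y_a(v)}{y_e(\varepsilon)}=y_f(v\varepsilon)\in H,
\]
so with $v=\varepsilon^{-1}\in R_0$ we get $y_f(1)\in H$, and by transport every long root element $y_\beta(R_0)$ lies in $H$. To reach short roots, apply \eqref{eq:ab} with $v=1$:
\[
 \commR{y_a(1)}{y_b(t)}=y_c(t)y_d(-t^\#)y_e(N(t))y_f(N(t))\in H.
\]
Multiplying by the long factors, which already lie in $H$, leaves $y_c(t)y_d(-t^\#)$. Then use \eqref{eq:cd}, or better \eqref{eq:db}, to isolate a single short root. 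A cleaner route is to conjugate $y_a(1)\in H$ by $y_b(t)$: the element $y_b(t)y_a(1)y_b(-t)$ lies in $H$ and differs from $y_a(1)$ by the same commutator word, so $y_c(t)y_d(-t^\#)\in H$ for all $t$. Taking the commutator with $y_b(u)$ and using \eqref{eq:cb} and \eqref{eq:db} (whose corrections are long, hence in $H$) yields $y_d(-(t\times u))\cdot(\text{long})\in H$. Choosing $t=1$ and $u=1/2$ gives $t\times u=2u=1$, so $y_d(-1)\in H$, and transport gives all short $y_\beta(\pm1)$.

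In the short case, start with $y_b(\varepsilon)\in H$, $\varepsilon\in R^\times$. Use \eqref{eq:db} with $d$ from transport: $\commR{y_d(t)}{y_b(\varepsilon)}=y_e(-T(t\varepsilon))\in H$. Taking $t=\varepsilon^{-1}/3$ gives $T(1/3)=1$, so $y_e(-1)\in H$, and we are in the long case.

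Finally, having $y_\beta(1)$ and $y_\beta(R_0)$ for all long roots, the relation \eqref{eq:ab} shows $y_c(t)y_d(-t^\#)\in H$. The $d$-part is then removed by using $y_d(s)\in H$: apply the $t\times u$ trick with $u$ arbitrary and $t=1/2$, so $t\times u=u+\rho(u)+\rho^2(u)-u$... Here care is needed; simpler is to commute $y_{-a}$-conjugates. Concretely, $y_d(R)\subseteq H$ should follow from $y_d(-(1\times u))$ together with the long factors, after checking that $u\mapsto 1\times u=T(u)-u$ is bijective on $R$ (its inverse is $u\mapsto T(u)/2-u$, using $2\in R^\times$). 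Then transport gives all short root subgroups, so $H\supseteq E_R$.

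I expect the main obstacle to be the bookkeeping of the correction terms in the $c$--$b$ commutator, that is, verifying that every leftover factor is long and therefore already in $H$.
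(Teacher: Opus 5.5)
Your argument is correct, and its skeleton matches the paper's. You get all long root subgroups from \eqref{eq:ae}. You obtain $Q(t)=y_c(t)y_d(-t^\#)\in H$ from \eqref{eq:ab} at $v=1$. You reduce the short case to the long one through \eqref{eq:db} with parameter $\varepsilon^{-1}/3$.

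The real difference is how you split $Q(t)$. The paper uses $(-t)^\#=t^\#$: adjacent cancellation gives $Q(t/2)Q(-t/2)^{-1}=y_c(t)$, so $y_c(R)\subseteq H$ at once, with no further relation. You instead commute $Q(t)$ with $y_b(u)$. The bookkeeping you were worried about does close up, because $y_d$ commutes with $y_d,y_e,y_f$:
\[
 \commR{Q(t)}{y_b(u)}
 =y_d(-(t\times u))\,y_e(T(tu^\#))\,y_f(T(ut^\#))\,y_e(T(t^\#u)).
\]
At $t=1$, the map $u\mapsto 1\times u=T(u)-u$ is bijective with inverse $x\mapsto T(x)/2-x$, so $y_d(R)\subseteq H$. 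Your route costs two extra commutator relations, \eqref{eq:cb} and \eqref{eq:db}, plus a linear inversion. It also yields $y_d(t\times u)\in H$ for all $t,u$, which is never used. The paper's cancellation is shorter and needs only $2\in R^\times$ and the fact that $\#$ is quadratic.

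Two presentation points. First, $y_\beta(R_0)\subseteq H$ for long $\beta$ comes from letting $v$ range over $R_0$ in $\commR{y_a(v)}{y_e(\varepsilon)}=y_f(v\varepsilon)$ and then transporting. It does not follow from $y_f(1)$ plus transport, and likewise $y_d(-1)\in H$ alone only gives short parameters $\pm1$. Second, the torus detour and the abandoned $t=1/2$ and $y_{-a}$ attempts should simply be deleted.
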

\begin{proof}
Use right commutators. Transport a long unit to $y_a(v)$,
$v\in R_0^\times$. Relation~\eqref{eq:ae} gives every $y_f(vw)$,
$w\in R_0$, and Weyl transport gives all long root groups.
Relation~\eqref{eq:ab} at $v=1$, after deleting its last two
long factors, gives
$Q(t)=y_c(t)y_d(-t^\#)\in H$ for every $t\in R$.
Since $(-t)^\#=t^\#$, adjacent cancellation gives
\[
 Q(t/2)Q(-t/2)^{-1}
 =y_c(t/2)y_d(-(t/2)^\#)y_d((t/2)^\#)y_c(t/2)=y_c(t).
\]
Weyl transport gives all short root groups.
If the initial unit is short, transport it to $y_d(u)$ and use
\[
 \commR{y_d(u)}{y_b(-u^{-1}/3)}=y_e(1)
\]
from \eqref{eq:db}, reducing to the long case.
\end{proof}

Write $U^\pm$ for the relative unipotent groups and
$U_0=\langle y_\gamma(R_\gamma):\gamma\in\{b,c,d,e,f\}\rangle$.
Positive coordinates allow the simple long root $a$ to be placed first.

\begin{lemma}[Extraction]\label{lem:extraction}
Let $H\trianglelefteq E_R$.
\begin{enumerate}
\item If a product of factors from distinct positive root subgroups
belongs to $H$, then every factor belongs to $H$, regardless of order.
\item If $y_a(t)Xhu^-\in H$, where $t\in R_0$, $X\in U_0$,
$h\in T_Q(R)^\delta$, and $u^-\in U^-$, then $y_a(t)\in H$.
\end{enumerate}
\end{lemma}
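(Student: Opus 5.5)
For part (1) I will induct on the number of factors, peeling off one factor at a time by commutators with torus elements or root elements that act on the factors in a controlled way. Write the element as $x=\prod_{\gamma}y_\gamma(t_\gamma)\in H$ over distinct positive roots. Recollecting by the positive-root relations does not change which roots carry nonzero parameters in the lowest heights. So I will work with the height filtration $U^+=U_1\supseteq U_2\supseteq\cdots$, where $U_i$ is generated by roots of height $\ge i$. Each $U_i$ is normal in $U^+$, and the commutators \eqref{eq:ab}--\eqref{eq:cd} raise height.

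The main tool is conjugation by fixed torus elements $h\in T_Q(R)^\delta$. By Proposition~\ref{prop:factorization}, $H$ is normalized by $G_R$, since $E_R\trianglelefteq G_R$ and $H\trianglelefteq E_R$ is not automatically normal in $G_R$. I therefore have to use only conjugation by elements of $E_R$, such as $h_{\gamma}(v)$ from the proof of Proposition~\ref{prop:factorization}. Those lie in $E_R$ by the rank-one identities.

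Take $h=h_b(r)h_a(s)$ built from elementary $h_{b}(r)$, $h_a(s)$ with $r\in R^\times$, $s\in R_0^\times$. It acts on $y_\gamma$ by multipliers $m_\gamma(h)$, which are characters in $r,\rho(r),\rho^2(r),s$. Then $x^{-1}\,hxh^{-1}\in H$ rescales each factor. I will choose units with $m_\gamma(h)-1$ a unit for the lowest-height root $\gamma$ present, while another root gets multiplier one. The natural choice is a rational integer unit such as $2$, $3$, or $1/2$, since $2,3\in R^\times$ and integer parameters are fixed by $\rho$. Then the commutator kills some factors and rescales others by units.

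Doing this for the roots of minimal height isolates, modulo higher-height terms, a single factor $y_\gamma(u t_\gamma)$ times an element of $U_{i+1}$. Descending induction on height, starting from the top root $f$ where the claim is trivial, shows that the higher part and hence $y_\gamma(t_\gamma)$ lie in $H$. Multiplying back then removes that factor from $x$, and induction on the number of factors finishes. Independence of order follows because the factors obtained are the same root subgroups.

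The delicate point, and the step I expect to be the main obstacle, is separating two roots of the same height when their multipliers coincide for all available torus characters. Here $b$ and $a$ both have height one, and the characters have limited freedom. If torus separation fails, I would instead commute with a suitable root element. For example, commuting with $y_{-a}$ or $y_{-b}$ with integer parameter lets me move to another positive system, and \eqref{eq:coherent-weyl} transports the situation. Alternatively, commuting with $y_e(1)$ kills everything except the $a$-component up to an $f$-term, by \eqref{eq:ae}. The height-one case then reduces to (1) for higher roots.

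For part (2), I will start from $g=y_a(t)Xhu^-\in H$ and commute with $y_f(w)$. The root $f$ is highest, so $y_f$ commutes with $U^+$, and $h$ rescales it. The $u^-$ part produces lower terms, which is not helpful. Instead I will commute with $y_e(1)$ on the left side. Using $[y_a(t),y_e(1)]=y_f(t)$, I will push the resulting element into a form covered by (1), yielding $y_f(t)\in H$ and, after Weyl transport, $y_a(t)\in H$.

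If the $hu^-$ tail obstructs this, I will use the standard trick of conjugating $g$ by $y_f(w)$ and forming $g^{-1}y_f(w)gy_f(-w)$. This kills $U^+$-terms and leaves a word in $U^-hU^+$-type factors of controlled shape. The cleaner route is probably the one in the appendix: a double commutator with $y_{e}$ and $y_{f}$ reducing to a pure positive-root product. I expect the bookkeeping of that reduction to be the hardest part of the whole proof.
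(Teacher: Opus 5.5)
\paragraph{Part (1).} The inductive step has a genuine gap. After forming $x^{-1}hxh^{-1}$ you hold $y_\gamma(ut_\gamma)w\in H$ with $w\in U_{i+1}$. Descending induction on height yields nothing here, because its hypothesis applies to elements of $H\cap U_{i+1}$, and $w$ is not known to lie in $H$.

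The configuration ``one lowest factor followed by higher ones'' is exactly the substantive case, for example $y_b(t)v$ with $v\in U_2$, or $c(t)d(u)e(v)f(w)$. Your argument is circular there. Moreover, $x^{-1}hxh^{-1}$ does not merely rescale factors, since the higher coordinates pick up corrections from \eqref{eq:ab}--\eqref{eq:cd}. Your remark that $[y_e(1),x]_L=y_f(-t_a)$ for a canonical positive word is correct, because $e$ centralizes $U_0$, and it does extract the $a$-factor.

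The paper uses no torus conjugation at all. It relies on explicit commutator identities: $[a(1),D]_L=f(v)$ and $[a_-(1),D]_L=e(w)$ for $D=d(u)e(v)f(w)$, and, for $c(t)d(u)e(v)f(w)$, the double commutators $Z_\pm$, whose quotient \eqref{eq:extraction-norm-difference} eliminates the $c$-factor.

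A torus route can be made to work, but you must \emph{cancel} the low factor rather than isolate it. Take the elementary element $h=h_b(r)h_a(s)$. Its multipliers on $b,a,c,d,e,f$ are $r^2/s,\ s^2/r^3,\ s/r,\ r,\ r^3/s,\ s$. The choice $r=4$, $s=8$ turns $x=y_b(t)v\in H$ into $v^{-1}(hvh^{-1})v^{-1}=d(2t_d)e(6t_e)f(\ast)\in H$. One then separates inside the abelian group $U_{d,e,f}$ using $r,s=\pm1$, deletes, and continues. Every such choice must have unit differences using only $2$ and $3$; for instance $r=2$ can produce $2^3-1=7$, which may lie in $J$.

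Order independence is also not automatic. Collection changes the higher parameters, so you must extract the least-height original factors, delete them in place by $PxQ,\ x\in H\Rightarrow PQ\in H$, and repeat in increasing height.

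\paragraph{Part (2).} This part carries the real content, and it is not yet an argument; both commutators you propose fail.

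With $y_f(w)$: since $f$ is central in $U^+$, conjugating $[y_f(w),g]_L$ by $(y_a(t)X)^{-1}$ gives $y_f(w)\,hu^-y_f(-w)(u^-)^{-1}h^{-1}$. This no longer involves $t$ or $X$, so the commutator has discarded precisely the factor to be extracted.

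With $y_e(1)$: the same conjugation gives $y_e(1)y_f(-t)\cdot hu^-y_e(-1)(u^-)^{-1}h^{-1}$. Its last factor is a conjugate of a positive root element by an arbitrary element of $U^-$. It is not a product of positive root elements, so part (1) does not apply.

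The missing idea is to eliminate $h$ and $u^-$ in stages, as the paper does.
\begin{enumerate}
\item Commute first with $a_-(1)$. This replaces $y_a(t)$ by the rank-one element $u=[a(-t),a_-(1)]_L$ and pushes the negative part into $V^-=U_{-a,-c,-d,-e,-f}$; the torus enters only through its action on roots.
\item Commute with $b(1)$. Using $[b(R),V^-]\subseteq W^-=U_{-a,-c,-d,-f}$ and the fact that $W^-$ centralizes $a_-(R_0)$, this reaches \eqref{eq:extraction-r}, with $b$-coordinate $-t$.
\item Strip the remaining negative factors using the $e(1)$-commutator identity \eqref{eq:extraction-cleanup} together with $w_f$-conjugation.
\item Conjugate by $w_aw_c$ to obtain a positive word with leading factor $d(-t)$, and extract it by part (1).
\item Recover $a(t)$ from $[d(-t),b(1/3)]_L=e(t)$ and $N_se(t)N_s^{-1}=a(t)$.
\end{enumerate}
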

\begin{proof}
Appendix~\ref{app:extraction} proves both assertions by explicit
commutator words. The first uses increasing root height and deletion
of extracted factors in their original positions. The second produces
a leading short parameter $-t$, removes the negative factors in two
steps, and applies positive-factor extraction.
Only normality under $E_R$ is used; the individual torus and unipotent
factors need not belong to $H$.
\end{proof}

\begin{theorem}\label{thm:radical}
The subgroup $N_R$ is the greatest proper normal subgroup of $E_R$.
It is characteristic, and reduction gives an exact sequence
\[
 1\longrightarrow N_R\longrightarrow E_R
 \xrightarrow{\pi_R}E_k\longrightarrow1.
\]
\end{theorem}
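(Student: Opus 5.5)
The plan is to show that every normal subgroup $H\trianglelefteq E_R$ with $H\not\subseteq N_R$ equals $E_R$. Once this holds, the theorem follows formally. Lemma~\ref{lem:simple-residue} makes $\pi_R$ surjective with nontrivial simple image, so $N_R$ is a proper normal subgroup and the sequence is exact. If $H$ is any proper normal subgroup, then $H\subseteq N_R$, so $N_R$ is the greatest proper normal subgroup. It is then characteristic, since every automorphism permutes the proper normal subgroups and preserves inclusion.

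So let $H\trianglelefteq E_R$ with $\pi_R(H)\ne1$. Since $\pi_R(H)$ is normal in the simple group $E_k$, we get $\pi_R(H)=E_k$. Choose $z\in H$ whose residue lies in the absolute open cell and has a nonzero $a$-coordinate. For instance, take a preimage in $H$ of $\bar y_a(1)\bar w$, where $\bar w=\bar u_+\bar h\bar u_-$ is a suitable open-cell element of $E_k$. It is simplest to take $\bar z=\bar y_a(1)$ times an element of $H_k$ lying in $E_k$, or simply $\bar z=\bar y_a(1)$ itself.

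By Lemma~\ref{lem:local-cell}, $z=u_+hu_-$ with $u_\pm\in U^\pm$ and $h\in T_Q(R)^\delta$. Reduction preserves the factors, so the $a$-coordinate of $u_+$ is a unit $t\in R_0^\times$. Writing $u_+$ with $a$ placed first gives $u_+=y_a(t)X$ with $X\in U_0$, so $z=y_a(t)Xhu_-\in H$. Lemma~\ref{lem:extraction}(2) then yields $y_a(t)\in H$ with $t$ a unit. Lemma~\ref{lem:unit-generation} gives $H=E_R$.

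The main obstacle is producing an element of $H$ with an open-cell residue and a unit $a$-coordinate. This is only a residue-level issue. Since $\pi_R(H)=E_k$, we may pick any $z\in H$ with $\pi_R(z)=y_{a,k}(1)$. Its residue lies in the open cell with $\bar h=1$ and $\bar u_-=1$. The uniqueness and reduction-compatibility of Lemma~\ref{lem:local-cell} then force the lifted $a$-coordinate to be $\equiv1$, hence a unit in $R_0$ by the first part of that lemma. After this, all the heavy lifting sits in Lemma~\ref{lem:extraction}(2), which only uses normality of $H$ in $E_R$; the torus and negative factors need not lie in $H$ individually.
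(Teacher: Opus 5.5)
Your proposal is correct and matches the paper's proof essentially step for step. Both take $z\in H$ with residue $y_{a,k}(1)$ and use Lemma~\ref{lem:local-cell} to write $z=y_a(t)Xhu^-$ with $t\in 1+J_0\subseteq R_0^\times$. Both then extract $y_a(t)$ by Lemma~\ref{lem:extraction}, conclude $H=E_R$ by Lemma~\ref{lem:unit-generation}, and read off properness, characteristicity and exactness from Lemma~\ref{lem:simple-residue}; the tentative alternative choices of $\bar z$ in your second paragraph are unnecessary.
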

\begin{proof}
If $H\not\subseteq N_R$, simplicity of $E_k$ gives $\pi_R(H)=E_k$.
Choose $z\in H$ with residue $y_{a,k}(1)$.
This residue lies in the absolute open cell, so
Lemma~\ref{lem:local-cell} gives a unique relative decomposition
$z=u^+hu^-$ with
$\overline{u^+}=y_{a,k}(1)$ and $\bar h=\overline{u^-}=1$.
Write $u^+=y_a(t)X$, $X\in U_0$. Positive coordinates give
$t\in1+J_0\subseteq R_0^\times$.
Lemma~\ref{lem:extraction} places $y_a(t)$ in $H$, and
Lemma~\ref{lem:unit-generation} gives $H=E_R$.

Thus every proper normal subgroup lies in $N_R$, which is proper
by Lemma~\ref{lem:simple-residue}. This characterization makes $N_R$
characteristic; exactness follows from surjectivity of reduction.
\end{proof}

This proof is independent of cover centrality. It neither identifies
$N_R$ with the normal closure of the radical root elements nor
classifies normal subgroups by invariant ideals.

\subsection{Automorphisms of the elementary group}
\label{sec:local-classification}

For $\alpha\in\Aut(E_R)$, Theorem~\ref{thm:radical} defines
$\bar\alpha\in\Aut(E_k)$ by
$\bar\alpha(\pi_R(x))=\pi_R(\alpha(x))$.
Write $\kappa_R:R\to k$ for coefficient reduction.

\begin{lemma}[The field factor]\label{lem:field-factor}
There are $\bar g\in G_Q(k)$, $\nu\in\Aut(k)$, and
$\epsilon\in\{1,-1\}$ such that
\begin{equation}
 \nu\bar\rho=\bar\rho^\epsilon\nu,\qquad
 \bar\alpha=\Int(\bar g)\circ\Phi_\nu^\epsilon.
 \label{eq:local-residual-factor}
\end{equation}
The element $\bar g$ lifts to $G_Q(R)$. Thus $\alpha$ satisfies
\eqref{eq:typed-rs} with coefficient map $\nu\circ\kappa_R$,
quotient $k$, and group image $H=E_k$.
\end{lemma}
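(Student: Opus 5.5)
The plan is to obtain the residual factorization from the classical automorphism theorem over fields, applied to $E_k$, and then to lift the conjugator using the surjectivity results of Section~\ref{sec:coordinates}. The proof has three steps.

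\emph{Step 1: identify $E_k$ and quote the field theorem.} By Theorem~\ref{thm:radical}, $\bar\alpha$ is a well-defined automorphism of $E_k$. Split according to $\bar\rho$. If $\bar\rho=\id$, Lemma~\ref{lem:field-placement} identifies $E_k$ with split $G_2(k)$, root by root. In characteristic different from $2,3$ there is no exceptional graph automorphism. Steinberg's theorem \cite{Steinberg1960,Steinberg} then writes $\bar\alpha$ as a product of an inner, a diagonal and a field automorphism. In this case $\epsilon$ may be taken arbitrary, say $1$, since $\nu\bar\rho=\bar\rho\nu$ trivially. If $\bar\rho$ has order three, $E_k$ is the simple group ${}^3D_4(k,\bar\rho)$ by Lemma~\ref{lem:simple-residue}. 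Steinberg's theorem for twisted groups \cite{Steinberg1960,Humphreys1969} gives $\bar\alpha=$ inner $\circ$ diagonal $\circ$ field. Here the field automorphism $\nu$ must carry the defining automorphism $\bar\rho$ to a generator of $\langle\bar\rho\rangle$, that is, $\nu\bar\rho\nu^{-1}=\bar\rho^{\pm1}$. This yields $\epsilon$, and the field factor is realized by the pinned map $\Phi_\nu^\epsilon$ of Section~\ref{sec:residual}: $\mathbf G_Q(\nu)$ for $\epsilon=1$, and $d_0\circ\mathbf G_Q(\nu)$ for $\epsilon=-1$. In both cases inner and diagonal automorphisms are conjugations by points of $\mathbf G_Q(\bar k)$ fixed modulo the center. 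Combine them into a single $\bar g\in\mathbf G_Q(k)$ normalizing $E_k$, taken in adjoint form. Since $\Int(\bar g)=\bar\alpha\circ(\Phi_\nu^\epsilon)^{-1}$ preserves the $\tau\bar\rho$-fixed frame, $\tau\bar\rho(\bar g)\bar g^{-1}$ centralizes $\mathcal F_k$. Lemma~\ref{lem:ambient-center} then gives $\bar g\in G_Q(k)$.

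\emph{Step 2: match the normalizations.} Check that the field theorem's ``diagonal'' elements can indeed be taken in $\mathbf G_Q(k)$ with $k$-rational adjoint class rather than over $\bar k$. The adjoint group's $k$-points already contain all diagonal automorphisms (the quotient $T_Q(k)/$image of $T_P(k)$), so no extension is needed. Then check that the classical root conventions agree with ours up to signs absorbed into $\bar g$. I expect this to be the main obstacle: translating Steinberg's statement (formulated for the universal root-generated group, possibly up to central quotients) into our pinned signed coordinates. I would handle it by comparing both sides on the four simple signed constants. A discrepancy is an automorphism fixing the frame projectively, and Theorem~\ref{thm:recovery} over $k$ (target $k$, $2$ invertible) identifies it with a root substitution $\phi_{\nu'}$. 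The frame centralizer, which is trivial by Lemma~\ref{lem:ambient-center}, pins down the conjugator uniquely.

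\emph{Step 3: lifting and \eqref{eq:typed-rs}.} The reduction $G_Q(R)\to G_Q(k)$ is surjective by Proposition~\ref{prop:factorization}, so $\bar g$ lifts. Then
\[
\operatorname{red}_R\circ\alpha=\bar\alpha\circ\pi_R=\Int(\bar g)\circ\Phi_\nu^\epsilon\circ\pi_R .
\]
Here $\pi_R$ is the coefficient quotient $r$ for the map $\nu\circ\kappa_R:R\to k$, whose kernel is $J$, with $B=k$ and $H=E_k$ by Lemma~\ref{lem:simple-residue}. The required compatibility $\nu\kappa_R\rho=\bar\rho^\epsilon\nu\kappa_R$ follows from $\kappa_R\rho=\bar\rho\kappa_R$ together with the relation $\nu\bar\rho=\bar\rho^\epsilon\nu$ from Step~1. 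This is exactly \eqref{eq:typed-rs}.
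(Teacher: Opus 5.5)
Your proposal is correct and follows the paper's proof. It quotes the field classification, merges the inner and diagonal factors into $\bar g\in G_Q(k)$, realizes the field factor by the pinned map $\Phi_\nu^\epsilon$, lifts $\bar g$ by Proposition~\ref{prop:factorization}, and reads off \eqref{eq:typed-rs}.

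The differences are in detail. You get $\bar g\in G_Q(k)$ from triviality of the frame centralizer (Lemma~\ref{lem:ambient-center}), where the paper checks directly that the diagonal character satisfies $\chi(\tau\alpha_i)=\bar\rho(\chi(\alpha_i))$. The paper also makes your Step~2 fallback its main argument: it applies Theorem~\ref{thm:recovery} to the frame-fixing map $\Int(\bar g^{-1})\circ\bar\alpha$ and takes $\epsilon$ from Lemma~\ref{lem:orientation} rather than reading it off the cited theorem. Finally, for $\bar\rho=\id$ the paper cites \cite{BuninaG2}, so that no perfectness of $k$ is required.
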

\begin{proof}
If $\bar\rho=\id$, use the pinned $G_2$ comparison and
\cite[Theorem~3, p.~16]{BuninaG2}, which applies to fields with
$1/2,1/3$ without perfectness.
If $\bar\rho$ has order three, use the equal-order triality
classification \cite[Theorem~36, p.~195]{Steinberg}, with the field
and root conventions on pp.~158--159 and 177--178.
It supplies inner, diagonal, and field factors.

In the adjoint form, a diagonal factor is a torus character
$\chi:Q\to k^\times$. Compatibility with the triality root products
gives $\chi(\tau\alpha_i)=\bar\rho(\chi(\alpha_i))$:
the outer scalings form a $\bar\rho$-orbit, and the middle scaling
is fixed. Hence the inner and diagonal factors combine into
$\bar g\in G_Q(k)$.
The remaining map $\beta=\Int(\bar g^{-1})\circ\bar\alpha$ fixes
the four simple signed constants: field factors fix $\pm1$,
and pinned diagram identifications permute the commuting outer
constant factors and fix the middle one.
The Tits words then give the whole fixed frame.

Theorem~\ref{thm:recovery}, applied directly to this frame-fixing
automorphism over $k$, gives $\nu\in\Aut(k)$ preserving trace.
Lemma~\ref{lem:orientation} supplies one orientation for $\nu$.
The pinned map $\mathbf G_Q(\nu)$ or
$d_0\circ\mathbf G_Q(\nu)$ from Section~\ref{sec:residual}
agrees with $\beta$ on every root, proving
\eqref{eq:local-residual-factor}.

Proposition~\ref{prop:factorization} lifts $\bar g$.
Since $\ker(\nu\circ\kappa_R)=J$, equivariance of reduction
and surjectivity of $\pi_R$ give the data in \eqref{eq:typed-rs}.
No lift of $\nu$ to $R$ has been assumed.
\end{proof}

\begin{theorem}[Elementary automorphisms]\label{thm:local-adjoint}
Every $\alpha\in\Aut(E_Q(R))$ has a unique expression
\[
 \alpha=\Int(g)\circ\phi_\eta,\qquad
 g\in G_Q(R),\quad \eta\in\Aut_T(R,\rho).
\]
Conversely, every such pair defines an automorphism.
\end{theorem}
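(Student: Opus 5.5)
The plan is to reduce the statement to the relative rigidity theorem with $S=A=R$ and $\Psi=\alpha$, and to take the converse from Theorem~\ref{thm:local-converse}.

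\emph{Existence.} Theorem~\ref{thm:radical} makes $N_R$ characteristic, so $\bar\alpha\in\Aut(E_k)$ is defined. Lemma~\ref{lem:field-factor} then supplies $\bar g$, $\nu$ and $\epsilon$. Moreover, $\bar g$ lifts to $G_Q(R)$, and $\alpha$ satisfies \eqref{eq:typed-rs}. As recorded in Section~\ref{sec:residual}, this gives a liftable prepared residual frame for $\Psi=\alpha$, in the sense of Definition~\ref{def:prepared}.

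Theorem~\ref{thm:relative} now produces $g\in G_Q(R)$ and a unital trace-preserving ring map $\eta:R\to R$ with
\[
 \alpha(y_\gamma(t))=g\,y_\gamma(\eta(t))g^{-1}
\]
for all roots. To see that $\eta$ is an automorphism, put $\psi=\Int(g^{-1})\circ\alpha$. This is an automorphism of $E_R$ fixing the constant frame; here Proposition~\ref{prop:factorization} is used to see that $\Int(g^{-1})$ preserves $E_R$. The last clause of Theorem~\ref{thm:recovery} then shows that $\eta$ is a ring automorphism, so $\eta\in\Aut_T(R,\rho)$.

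By Theorem~\ref{thm:local-converse}, $\phi_\eta$ exists. Since $\psi$ and $\phi_\eta$ agree on the root generators, $\psi=\phi_\eta$, and hence $\alpha=\Int(g)\circ\phi_\eta$.

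\emph{Uniqueness.} Suppose $\Int(g_1)\circ\phi_{\eta_1}=\Int(g_2)\circ\phi_{\eta_2}$. Evaluating at $y_\gamma(1)$, where $\phi_\eta$ fixes the frame, shows that $g_2^{-1}g_1$ centralizes $\mathcal F_R$. Lemma~\ref{lem:ambient-center} gives $g_1=g_2$. Then $\phi_{\eta_1}=\phi_{\eta_2}$, and injectivity of a short-root map gives $\eta_1=\eta_2$. Alternatively, one can invoke the uniqueness part of Theorem~\ref{thm:relative}.

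\emph{Converse.} Let $g\in G_Q(R)$ and $\eta\in\Aut_T(R,\rho)$. Then $\phi_\eta$ is an automorphism by Theorem~\ref{thm:local-converse}, and $\Int(g)$ restricts to an automorphism of $E_R$ because $E_R\trianglelefteq G_R$ (Proposition~\ref{prop:factorization}). Their composite is therefore an automorphism.

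The main obstacle is verifying that the residual data really feed into Theorem~\ref{thm:relative}. This means checking that $\Phi^\epsilon_\nu$ on the residue field, composed with reduction, matches the preparation condition, and that the lift of $\bar g$ lies in $G_Q(R)$ rather than only in $\mathbf G_Q(R)$. Both points are handled by Lemma~\ref{lem:field-factor} together with the pinned construction in Section~\ref{sec:residual}, so the remaining argument is bookkeeping.
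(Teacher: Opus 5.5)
Your proposal is correct and follows essentially the same route as the paper. Lemma~\ref{lem:field-factor} supplies the prepared residual frame for Theorem~\ref{thm:relative}; the automorphism clause of Theorem~\ref{thm:recovery} together with Theorem~\ref{thm:local-converse} identifies $\Int(g^{-1})\circ\alpha$ with $\phi_\eta$; and uniqueness and the converse follow from the frame-centralizer argument (Lemma~\ref{lem:ambient-center}) and normality of $E_R$, which is just the uniqueness part of Theorem~\ref{thm:relative} made explicit.
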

\begin{proof}
Lemma~\ref{lem:field-factor} permits application of
Theorem~\ref{thm:relative}, giving $g$ and a unital trace-preserving
ring map $\eta$.
Normality of $E_R$ makes $\psi=\Int(g^{-1})\circ\alpha$ a
frame-fixing automorphism of $E_R$.
Theorem~\ref{thm:recovery} makes $\eta$ invertible, and its root
rule identifies $\psi$ with $\phi_\eta$.
The pair is unique by Theorem~\ref{thm:relative}.

Conversely, Theorem~\ref{thm:local-converse} constructs $\phi_\eta$,
and $g\in G_Q(R)$ normalizes $E_Q(R)$.
\end{proof}

For the field data above, Theorem~\ref{thm:relative} also gives
$\operatorname{red}_R(g)=\bar g$ and
$\kappa_R\circ\eta=\nu\circ\kappa_R$.
Thus the coefficient lift is recovered from the group automorphism.
The orientation obtained here concerns the residue field;
the local condition remains trace preservation.

\subsection{The two forms and the full groups}\label{sec:forms}

We compare the two forms and extend the coefficient automorphisms,
using the factorization
$G_\Lambda(R)=E_\Lambda(R)H_\Lambda$,
$H_\Lambda=T_\Lambda(R)^\delta$, and the equality
$G_P(R)=E_P(R)$ from Proposition~\ref{prop:factorization}.

\begin{proposition}\label{prop:forms}
The central isogeny $\pi:\mathbf G_P\to\mathbf G_Q$ is injective on
$G_P(R)$ and induces an isomorphism $\pi_E:E_P(R)\to E_Q(R)$ with
$\pi_E(y_{\gamma,P}(t))=y_{\gamma,Q}(t)$.
Every $g\in G_Q(R)$ has a canonical adjoint action
$\operatorname{Ad}_P(g)$ on $G_P(R)$ preserving $E_P(R)$,
compatible with conjugation under $\pi$.
\end{proposition}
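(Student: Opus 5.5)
We prove the three assertions of Proposition~\ref{prop:forms} in order: injectivity on $G_P(R)$, the elementary isomorphism, and the adjoint action. All three reduce to the kernel of $\pi$.

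\emph{Injectivity.} The kernel of $\pi:\mathbf G_P\to\mathbf G_Q$ is the center $\mathbf Z=\mu_2\times\mu_2$, viewed as the group scheme of characters of $P/Q$. Its $R$-points are the homomorphisms $P/Q\to\mu_2(R)$. Since $R$ is local with $2\in R^\times$, $\mu_2(R)=\{\pm1\}$: indeed $(\lambda-1)(\lambda+1)=0$ and $\lambda-1,\lambda+1$ cannot both lie in $J$. Thus $\mathbf Z(R)\simeq(\mathbb Z/2)^2$, and $\rho$ acts trivially on it. The pinned triality $\tau$ cycles the three nonidentity points, as in the proof of Lemma~\ref{lem:field-placement}. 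Hence $\mathbf Z(R)^{\delta}=1$.

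If $x,x'\in G_P(R)$ satisfy $\pi(x)=\pi(x')$, then $x^{-1}x'\in\mathbf Z(R)$. It is $\delta$-fixed, being a quotient of fixed points, so $x=x'$. Therefore $\pi$ is injective on $G_P(R)$.

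\emph{Elementary isomorphism.} The root formula $\pi(y_{\gamma,P}(t))=y_{\gamma,Q}(t)$ holds because $\pi$ sends $x_{r,P}(t)$ to $x_{r,Q}(t)$ for each absolute root $r$. Applying this to every factor of the orbit words gives the formula. Both elementary groups are generated by the same relative root words, so $\pi$ maps $E_P(R)$ onto $E_Q(R)$. Combined with injectivity, $\pi_E$ is an isomorphism.

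\emph{Adjoint action.} This step needs care, since $g\in G_Q(R)$ need not lift to $G_P(R)$ (Example~\ref{ex:torus}). We therefore do not lift $g$; instead we use the canonical scheme morphism $\operatorname{Ad}:\mathbf G_Q\times\mathbf G_P\to\mathbf G_P$. It exists because conjugation by $\mathbf G_P$ on itself factors through the fppf quotient $\mathbf G_P/\mathbf Z=\mathbf G_Q$, as $\mathbf Z$ is central. Alternatively, one descends conjugation by a lift $\tilde g\in\mathbf G_P(R')$ over a faithfully flat extension $R'$: two lifts differ by a central element, so the conjugations agree, and the equalizer argument of Proposition~\ref{prop:descent} gives descent.

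This action has the following properties.
\begin{enumerate}
\item It satisfies $\pi(\operatorname{Ad}_P(g)x)=g\pi(x)g^{-1}$, by checking after base change.
\item It commutes with $\delta$ when $g$ is $\delta$-fixed. This follows because $\tau\rho$ acts compatibly on both pinned forms and $\pi$ is $\delta$-equivariant. Hence $\operatorname{Ad}_P(g)$ preserves $G_P(R)$.
\item It preserves $E_P(R)=G_P(R)$ by Proposition~\ref{prop:factorization}. Independently, $\pi\circ\operatorname{Ad}_P(g)=\Int(g)\circ\pi$, $\pi$ is injective on $G_P(R)$, and $\Int(g)$ preserves $E_Q(R)$ by normality. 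So $\operatorname{Ad}_P(g)=\pi_E^{-1}\circ\Int(g)\circ\pi_E$ on $E_P(R)$.
\end{enumerate}
The last identity also gives an elementary proof of canonicity: the action is determined by compatibility with $\pi$.

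The main obstacle is making $\operatorname{Ad}_P$ canonical without a lift of $g$. I would try the quotient-morphism construction first, since it requires only that $\mathbf Z$ be central and flat; the descent argument serves as a backup.
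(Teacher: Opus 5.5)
Your proposal is correct and follows essentially the same route as the paper. Injectivity comes from triviality of the $\delta$-fixed central kernel $\mu_2^2(R)$; you derive $\mu_2(R)=\{\pm1\}$ from $(\lambda-1)(\lambda+1)=0$, where the paper uses the idempotent $(1-u)/2$. Surjectivity of $\pi_E$ comes from the relative root words, and $\operatorname{Ad}_P$ comes from conjugation on $\mathbf G_P$ factoring through the central quotient, with preservation of $E_P(R)$ following from normality of $E_Q(R)$ and injectivity of $\pi$ on $G_P(R)$. Your added remark that injectivity forces $\operatorname{Ad}_P(g)=\pi_E^{-1}\circ\Int(g)\circ\pi_E$ makes the paper's claim of canonicity explicit.
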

\begin{proof}
If $u^2=1$, then $(1-u)/2$ is an idempotent in the local ring $R$,
so $u=\pm1$. Thus $\rho$ fixes all four points of the central kernel
$\mu_2^2(R)$, while $\tau$ cycles the three nonidentity points.
The fixed kernel is trivial, proving injectivity.
The relative root words lift every generator of $E_Q(R)$,
proving surjectivity of $\pi_E$.

Conjugation on $\mathbf G_P$ factors through the central quotient
$\mathbf G_Q$ \cite[Theorem~7.1.9]{Conrad}: lifts after a faithfully
flat extension differ centrally and define the same action.
This action is $\tau\rho$-equivariant, so fixed points act on fixed
points. Normality of $E_Q(R)$ and injectivity of $\pi$ on $G_P(R)$
show that it preserves $E_P(R)$. No lift of $g$ over $R$ is required.
\end{proof}

In particular, $\pi(G_P(R))=E_Q(R)$.
A $\delta$-fixed lift of $g\in G_Q(R)$ exists exactly when
$g\in E_Q(R)$, and is then unique.

\smallskip\noindent\emph{The torus action.}
\label{subsec:torus-substitution}
The simple-root characters identify $H_Q$ with
$R^\times\times R_0^\times$. Write $h=t(u,v)$ for the point with
\[
 \alpha_4(h)=u,\quad \alpha_1(h)=\rho(u),\quad
 \alpha_3(h)=\rho^2(u),\quad \alpha_2(h)=v.
\]
Its multipliers on the coherent root maps are
\[
 \begin{array}{c|cccccc}
 \gamma&b&a&c&d&e&f\\\hline
 m_\gamma(t(u,v))&u&v&uv&u^\#v&N(u)v&N(u)v^2
 \end{array}
\]
and $m_{-\gamma}=m_\gamma^{-1}$.
In the $d$-map the free parameter occurs at
$\alpha_1+\alpha_2+\alpha_3$, giving the multiplier $u^\#v$.

For $\eta\in\Aut_T(R,\rho)$, Lemma~\ref{lem:trace-polynomials}
applied to $\eta$ and $\eta^{-1}$ gives $\eta(R_0)=R_0$ and
preservation of $\#$ and $N$. Hence
\[
 \eta_T(t(u,v))=t(\eta(u),\eta(v))
\]
is a torus automorphism satisfying
$\eta(m_\gamma(h))=m_\gamma(\eta_T(h))$.
Checking root generators gives, for $h\in H_Q$ and $x\in E_Q(R)$,
\begin{equation}
 \phi_\eta(hxh^{-1})=\eta_T(h)\phi_\eta(x)\eta_T(h)^{-1}.
 \label{eq:forms-torus-compatibility}
\end{equation}
This substitutes in the independent torus coordinates; it is not
an ambient coefficient action when $\eta$ does not commute with $\rho$.

\begin{proposition}[Extension]\label{prop:full-extension}
The automorphism $\phi_\eta$ extends uniquely to an automorphism
of $G_Q(R)$ by
\[
 \Phi_\eta(eh)=\phi_\eta(e)\eta_T(h)
 \qquad(e\in E_Q(R),\ h\in H_Q).
\]
On $G_P(R)=E_P(R)$, put
$\phi_{\eta,P}=\pi_E^{-1}\circ\phi_\eta\circ\pi_E$.
Then $\pi\circ\phi_{\eta,P}=\Phi_\eta\circ\pi$ on $G_P(R)$.
\end{proposition}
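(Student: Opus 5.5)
We must show four things: that $\Phi_\eta$ is well defined on $G_Q(R)=E_Q(R)H_Q$, that it is a homomorphism, that it is bijective and unique as an extension, and that the simply connected comparison holds.

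\emph{Well definedness.} By Proposition~\ref{prop:factorization}, every element of $G_Q(R)$ has the form $eh$, and $E_Q(R)$ is normal. Suppose $eh=e'h'$. Then $h'h^{-1}=e'^{-1}e\in E_Q(R)\cap H_Q$, so it suffices to prove
\[
 \phi_\eta(h_0)=\eta_T(h_0)\qquad\text{for } h_0\in E_Q(R)\cap H_Q .
\]
To get this, apply $\phi_\eta$ to the identity $h_0y_\gamma(t)h_0^{-1}=y_\gamma(m_\gamma(h_0)t)$, in which both sides lie in $E_Q(R)$. This gives
\[
 \phi_\eta(h_0)\,y_\gamma(\eta(t))\,\phi_\eta(h_0)^{-1}=y_\gamma\bigl(\eta(m_\gamma(h_0))\eta(t)\bigr).
\]
By the multiplier compatibility $\eta(m_\gamma(h_0))=m_\gamma(\eta_T(h_0))$, the right side equals $\eta_T(h_0)y_\gamma(\eta(t))\eta_T(h_0)^{-1}$. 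Hence $\eta_T(h_0)^{-1}\phi_\eta(h_0)$ centralizes the constant frame, and Lemma~\ref{lem:ambient-center} makes it trivial. I expect this step to be the main obstacle. It is the only place where $\phi_\eta$ and $\eta_T$ must be matched on overlapping elements, and it needs $\eta_T(h_0)\in G_Q(R)$, which holds because $\eta_T$ lands in $H_Q$.

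\emph{Homomorphism and bijectivity.} Next, compute
\[
 (eh)(e'h')=\bigl(e\cdot he'h^{-1}\bigr)(hh').
\]
Applying \eqref{eq:forms-torus-compatibility} and the fact that $\eta_T$ is a group homomorphism of the abelian group $H_Q$ shows that $\Phi_\eta$ is multiplicative. The map $\Phi_{\eta^{-1}}$ is well defined by the same argument, since $\eta^{-1}\in\Aut_T(R,\rho)$ and $\eta_T^{-1}=(\eta^{-1})_T$. It is a two-sided inverse, because $\phi_{\eta^{-1}}=\phi_\eta^{-1}$ by checking on root generators.

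\emph{Uniqueness.} Let $\Phi'$ be another extension. For $h\in H_Q$, the element $\Phi'(h)$ must satisfy $\Phi'(h)\phi_\eta(x)\Phi'(h)^{-1}=\phi_\eta(hxh^{-1})$, and so does $\eta_T(h)$ by \eqref{eq:forms-torus-compatibility}. Hence $\eta_T(h)^{-1}\Phi'(h)$ centralizes $\phi_\eta(E_Q(R))=E_Q(R)$, and $\Cent_{G_Q(R)}(E_Q(R))=1$ by Lemma~\ref{lem:ambient-center}. So $\Phi'(h)=\eta_T(h)$, and the factorization forces $\Phi'=\Phi_\eta$.

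\emph{Simply connected form.} By Proposition~\ref{prop:forms}, $\pi$ maps $G_P(R)=E_P(R)$ isomorphically onto $E_Q(R)$ via $\pi_E$. Then
\[
 \pi\circ\phi_{\eta,P}=\pi_E\circ\pi_E^{-1}\circ\phi_\eta\circ\pi_E=\phi_\eta\circ\pi_E=\Phi_\eta\circ\pi,
\]
where the last equality holds because $\Phi_\eta$ restricts to $\phi_\eta$ on $E_Q(R)$, which contains $\pi(G_P(R))$.
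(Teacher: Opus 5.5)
Your proposal is correct and follows essentially the paper's proof: agreement of $\phi_\eta$ and $\eta_T$ on $E_Q(R)\cap H_Q$ via \eqref{eq:forms-torus-compatibility} and Lemma~\ref{lem:ambient-center}, multiplicativity from $(eh)(e'h')=e(he'h^{-1})hh'$, the inverse from $\eta^{-1}$, and uniqueness from $\Cent_{G_Q(R)}(E_Q(R))=1$. The differences are cosmetic. You compare an arbitrary extension with $\eta_T$ on $H_Q$, whereas the paper compares two extensions at every $g$. You also derive the $\pi$-compatibility formally from $\pi(G_P(R))=E_Q(R)$, whereas the paper checks it on root generators.
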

\begin{proof}
For $h\in E_Q\cap H_Q$, equation~\eqref{eq:forms-torus-compatibility}
shows that $\phi_\eta(h)$ and $\eta_T(h)$ induce the same conjugation
on $\phi_\eta(E_Q)=E_Q$. Lemma~\ref{lem:ambient-center} therefore gives
\begin{equation}
 \phi_\eta(h)=\eta_T(h)\qquad(h\in E_Q\cap H_Q).
 \label{eq:forms-intersection-agreement}
\end{equation}
If $eh=e'h'$, then $e'^{-1}e=h'h^{-1}$ lies in this intersection,
so the formula is well defined.
Equation~\eqref{eq:forms-torus-compatibility} applied to
$(eh)(e'h')=e(he'h^{-1})hh'$ proves multiplicativity.
The construction with $\eta^{-1}$ gives the inverse.

At each $g\in G_Q(R)$, two extensions have values inducing the same
conjugation on $\phi_\eta(E_Q)=E_Q$, since $E_Q$ is normal.
Their quotient centralizes $E_Q$, so is trivial.
This proves uniqueness. Compatibility with $\pi$ holds on the
root generators, hence on $G_P=E_P$.
\end{proof}

Evaluating the $\alpha_4,\alpha_2$ characters on the torus words
of Proposition~\ref{prop:factorization} gives
\begin{equation}
 \pi(h_{b,P}(r)h_{a,P}(s))
 =t\left(\frac{r^2}{s},\frac{s^2}{N(r)}\right),
 \qquad r\in R^\times,\ s\in R_0^\times.
 \label{eq:forms-torus-isogeny}
\end{equation}
The $P$ root-word rule sends $(r,s)$ to $(\eta(r),\eta(s))$
and preserves this formula because $\eta$ preserves $N$.

For an equivariant map $f:R\to A$ between local pairs and coefficient
automorphisms satisfying $f\eta_R=\eta_Af$, the corresponding full
actions commute with $f_*$. This follows on roots from the root rule,
on tori from $f_*(t(u,v))=t(f(u),f(v))$, and then by factorization.

\smallskip\noindent\emph{Perfection and completion.}

\begin{lemma}\label{lem:perfect}
One has $E_Q=[E_Q,E_Q]=[G_Q,G_Q]$.
Thus $E_Q$ is characteristic in $G_Q$, and $E_P=G_P$ is perfect.
\end{lemma}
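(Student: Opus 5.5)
We prove the chain of inclusions
\[
 E_Q\subseteq[E_Q,E_Q]\subseteq[G_Q,G_Q]\subseteq E_Q .
\]
The middle inclusion is formal.

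For the last inclusion, we use Proposition~\ref{prop:factorization}: $G_Q=E_QH_Q$, with $E_Q$ normal and $H_Q$ abelian. Hence $G_Q/E_Q$ is a quotient of the commutative group $H_Q$, so it is abelian and every commutator of $G_Q$ lies in $E_Q$.

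For the first inclusion, we show that every root generator $y_\gamma(t)$ is a product of commutators in $E_Q$. By Weyl transport \eqref{eq:coherent-weyl}, it suffices to treat one long and one short root. We use the torus elements $h_{\gamma}(v)\in E_Q$ of Proposition~\ref{prop:factorization}, together with their images under $\pi$, as toral elements inside $E_Q$. Explicitly, \eqref{eq:forms-torus-isogeny} shows that $t(r^2/s,\,s^2/N(r))$ lies in $E_Q\cap H_Q$.

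The identity
\[
 [h,y_\gamma(u)]_L=y_\gamma\bigl((m_\gamma(h)-1)u\bigr)
\]
then gives every $y_\gamma(t)$, provided some toral $h\in E_Q$ has $m_\gamma(h)-1$ a unit in $R_\gamma$; we take $u=t/(m_\gamma(h)-1)$.

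For the long root $a$, take $r=1$ and $s=2$. This gives $h=t(1/2,4)$ with $m_a=4$, and $m_a-1=3$ is a unit in $R_0$. For the short root $b$, take $s=1$ and $r=2$. This gives $t(4,1/8)$ with $m_b=4$, and again $m_b-1=3$ is a unit. Since $2,3\in R^\times$, these choices are legitimate.

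As a fallback, if these multipliers need to be checked against the conventions, any root may instead be obtained from the commutator relations. For instance, \eqref{eq:ae} gives
\[
 y_f(v)=\commR{y_a(v)}{y_e(1)},
\]
and \eqref{eq:db} with $u=1$ gives $y_e(-T(t))=\commR{y_d(t)}{y_b(1)}$, which covers every long parameter $w$ via $t=-w/3$. Short roots then follow from \eqref{eq:ab}, by the halving trick in the proof of Lemma~\ref{lem:unit-generation}, once the long factors have been removed as commutators. This is the main point requiring care: the torus route is cleanest, and the only thing to verify there is that the displayed multipliers give $m-1\in R_\gamma^\times$.

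Characteristicity of $E_Q$ in $G_Q$ follows immediately, because the derived subgroup is characteristic.

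Finally, $G_P=E_P$ is perfect. The isomorphism $\pi_E:E_P\to E_Q$ of Proposition~\ref{prop:forms} transfers perfectness from $E_Q$. Alternatively, the same torus argument works directly with $h_{\gamma,P}$.
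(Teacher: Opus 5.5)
Your proof is correct and follows the paper's route: toral root words in $E_Q$ act on a root subgroup with multiplier $m$ such that $m-1$ is a unit, which makes each root generator a commutator; Weyl transport covers all roots; $G_Q=E_QH_Q$ with $H_Q$ abelian gives $[G_Q,G_Q]\subseteq E_Q$; and $\pi_E$ transfers perfectness to the $P$ form. The only difference is the choice of toral element. The paper uses the cross-actions of $h_a(-1)$ on $y_b$ and $h_b(-1)$ on $y_a$, with multiplier $-1$, which needs only $2$ invertible. Your $h_a(2)$ and $h_b(2)$ act on their own roots with multiplier $4$, so you also use $3\in R^\times$, which the hypotheses provide.
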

\begin{proof}
The off-diagonal relative Cartan pairings are $-1,-3$, so the root
words $h_a(-1),h_b(-1)$ invert the other simple root parameters.
With left commutators,
\[
 \commL{h_a(-1)}{y_b(-t/2)}=y_b(t),\qquad
 \commL{h_b(-1)}{y_a(-v/2)}=y_a(v)
 \quad(t\in R,\ v\in R_0).
\]
Weyl transport makes every root generator a commutator in $E_Q$.
Since $G_Q=E_QH_Q$ and $H_Q$ is abelian, $G_Q/E_Q$ is abelian,
giving $[G_Q,G_Q]=E_Q$. The assertion for $P$ follows from $\pi_E$.
\end{proof}

\begin{proof}[Completion of Theorem~\ref{thm:main-A}]
The elementary adjoint case is Theorem~\ref{thm:local-adjoint}.
Transport through $\pi_E$ gives the simply connected classification
with the canonical adjoint action, since $G_P=E_P$.

By Lemma~\ref{lem:perfect}, every $\alpha\in\Aut(G_Q)$ restricts
to $E_Q$. Write this restriction as $\Int(g)\circ\phi_\eta$.
Then $\Int(g^{-1})\circ\alpha$ extends $\phi_\eta$, so
Proposition~\ref{prop:full-extension} identifies it with $\Phi_\eta$.
Restriction gives uniqueness of $(g,\eta)$, and the preceding
constructions show that every pair defines an automorphism.
\end{proof}

Thus restriction $\Aut(G_Q)\to\Aut(E_Q)$ is an isomorphism.
Uniqueness in the $P$ form concerns the adjoint data, not a fixed lift.
A factorization $g=eh$ separates inner and diagonal actions, but
these factors need not be unique because $E_Q\cap H_Q$ may be
nontrivial. No central factor occurs: all four groups are centerless
by Lemma~\ref{lem:ambient-center} and the elementary comparison.

%-----------------------
%-----------------------
%-----------------------

\medskip
\section{Coefficient symmetries and examples}\label{sec:examples}

For a local coefficient pair, we compare trace preservation with
normalization of $\langle\rho\rangle$ and illustrate the obstruction
to a fixed lift.
When $\rho=\id$, one has $T=3\id$ and
$\Aut_T(R,\rho)=\Aut(R)$; the root formulas and classification
reduce to split $G_2$. Nontrivial residue action imposes a stronger
restriction.

\begin{proposition}[Orientation]\label{prop:local-orientation}
If $\bar\rho$ has order three, every $\eta\in\Aut_T(R,\rho)$ satisfies
exactly one of $\eta\rho=\rho\eta$ and $\eta\rho=\rho^{-1}\eta$.
Thus $\Aut_T(R,\rho)=N_{\Aut(R)}(\langle\rho\rangle)$.
\end{proposition}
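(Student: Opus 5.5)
We would prove the orientation by reducing to the residue field, where Lemma~\ref{lem:orientation} already gives an orientation, and then lifting it to $R$. Let $\eta\in\Aut_T(R,\rho)$, and write $\kappa:R\to k$ for reduction. The composite $\nu=\kappa\circ\eta:R\to k$ is a unital trace-preserving map into a domain with $2\neq0$. Lemma~\ref{lem:orientation} therefore gives $\epsilon\in\{1,-1\}$ with $\nu\rho=\bar\rho^\epsilon\nu$. Since $\eta(J)=J$, the map $\eta$ induces $\bar\eta\in\Aut(k)$, and this says $\bar\eta\bar\rho=\bar\rho^\epsilon\bar\eta$.

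The main step is to upgrade this residual relation to all of $R$. For $t\in R$ put
\[
a=\eta(\rho(t)),\quad b=\eta(\rho^2(t)),\quad c=\rho^\epsilon(\eta(t)),\quad d=\rho^{-\epsilon}(\eta(t)).
\]
As in the proof of Lemma~\ref{lem:orientation}, trace preservation on $t$ and $t^2$, together with $2\in R^\times$, gives $a+b=c+d$ and $ab=cd$. Hence $(a-c)(a-d)=0$. The obstacle is that $R$ is not a domain, so this factorization alone does not force $a=c$. Here the order-three hypothesis enters. We want $a-d$ to be a unit for suitable $t$, which would give $a=c$ for those $t$.

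Modulo $J$ we have $\bar a=\bar c$ by the residual orientation. Also $\bar c-\bar d=\bar\rho^\epsilon(\bar s)-\bar\rho^{-\epsilon}(\bar s)$ with $\bar s=\overline{\eta(t)}$, and this is nonzero exactly when $\bar s\notin k^{\bar\rho}$. So for every $t$ whose residue $\overline{\eta(t)}$ is not $\bar\rho$-fixed, $a-d$ is a unit and $a=c$. Such $t$ exist because $\bar\rho\neq\id$.

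For general $t$, choose some $t_1$ of this kind. Then $t_1+t$ is of this kind whenever $\overline{\eta(t)}$ is $\bar\rho$-fixed, and in the remaining case $t$ is itself of this kind. By additivity of both sides of $\eta\rho=\rho^\epsilon\eta$, we get $\eta\rho(t)=\rho^\epsilon\eta(t)$ for all $t$. The relation with $\rho^2$ follows by applying $\rho$ again.

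Exactly one orientation holds, since $\rho\eta=\rho^{-1}\eta$ would force $\rho^2=\id$ and hence $\rho=\id$, contradicting that $\bar\rho$ has order three. For the equality of groups, the inclusion $\Aut_T(R,\rho)\subseteq N_{\Aut(R)}(\langle\rho\rangle)$ is what we just proved. Conversely, if $\eta\rho\eta^{-1}\in\{\rho,\rho^{-1}\}$, then $\eta$ permutes $\{\id,\rho,\rho^2\}$ under conjugation, so $\eta T\eta^{-1}=T$ and $\eta\in\Aut_T(R,\rho)$.
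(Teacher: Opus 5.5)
Your proof is correct. Its core computation is the same as the paper's: the identity $(a-c)(a-d)=0$, which is the paper's $D(t)E(t)=0$ with $D=\eta\rho-\rho\eta$ and $E=\eta\rho-\rho^2\eta$. Like the paper, you combine it with locality at an element whose residue $\overline{\eta(t)}$ is not $\bar\rho$-fixed.

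You differ in how the branch is chosen and then spread to all of $R$. The paper never passes to $k$. At a single such $t_0$, it notes that $D(t_0)-E(t_0)$ is a unit, so one of $D(t_0),E(t_0)$ is a unit and the other vanishes. The polarized identity $D(t_0)E(t)+D(t)E(t_0)=0$ then kills $E$ (or $D$) at every $t$ at once, with no split between generic and $\bar\rho$-fixed residues.

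You instead import a uniform sign $\epsilon$ from Lemma~\ref{lem:orientation} applied to $\kappa\circ\eta$, so the argument that an additive group is not a union of two proper subgroups is carried out over the field. This tells you in advance that $a-d$ is the unit factor at every generic $t$. You then need the translation $t\mapsto t+t_1$ to reach elements with $\bar\rho$-fixed residue.

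The paper's route is slightly more economical: one generic element, no case split, and no appeal to the field-level lemma. Yours is more modular. It uses Lemma~\ref{lem:orientation} as a black box, and the only genuinely local input is cancelling the unit factor $a-d$, so it reads as ``the residual orientation lifts.''

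A small point: existence of $t_1$ with $\overline{\eta(t_1)}\notin k^{\bar\rho}$ uses surjectivity of $\kappa\circ\eta$, which holds because $\eta\in\Aut(R)$, and not only $\bar\rho\neq\id$.
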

\begin{proof}
Put $D=\eta\rho-\rho\eta$ and $E=\eta\rho-\rho^2\eta$.
The calculation in Lemma~\ref{lem:orientation}, using cancellation
of $2$ but not the domain property, gives $D(t)E(t)=0$ for all $t$.
Choose $t_0$ so that the residue of $\eta(t_0)$ is not fixed by
$\bar\rho$, and hence has three distinct conjugates.
Then $D(t_0)-E(t_0)$ is a unit. By locality, one of $D(t_0),E(t_0)$
is a unit and the other is zero. Polarization gives
\[
 D(t_0)E(t)+D(t)E(t_0)=0\qquad(t\in R),
\]
forcing $D=0$ or $E=0$. Both cannot hold, since $\eta$ is surjective
and $\rho\ne\rho^2$. Conversely, a normalizer permutes $\rho,\rho^2$
and therefore preserves $T$.
\end{proof}

The reversing branch can be empty. If $R$ is a domain,
Lemma~\ref{lem:orientation} gives the same normalizer description
even when the residue action is trivial. Nilpotents allow additional
coefficient symmetries.

\begin{example}[A square-zero ring]\label{ex:squarezero}
Let $R=\mathbb Q\oplus V$, $V=\mathbb Q^2$, with $V^2=0$ and
$(q,v)(q',v')=(qq',qv'+q'v)$. Define
\[
 A_0=\begin{pmatrix}0&-1\\1&-1\end{pmatrix},\qquad
 \rho(q,v)=(q,A_0v).
\]
Then $R_0=\mathbb Q$, $T(q,v)=(3q,0)$, and every unital ring
endomorphism has the form
$\eta_L(q,v)=(q,Lv)$, $L\in M_2(\mathbb Q)$.
All preserve trace and are invertible exactly when
$L\in\GL_2(\mathbb Q)$.
Subsection~\ref{subsec:squarezero-proof} proves directly that
\[
 E_Q(R)=G_Q(R)\simeq V_{14}\rtimes G_2(\mathbb Q),\qquad
 V_{14}\simeq W_7\otimes_{\mathbb Q}V,\qquad
 \operatorname{End}_{G_2(\mathbb Q)}(V_{14})=M_2(\mathbb Q),
\]
where $W_7$ is the seven-dimensional irreducible module.
In these multiplicity coordinates, $T_L=\id_{W_7}\otimes L$
gives the group endomorphism $(Z,g)\mapsto(T_LZ,g)$, with
$\det T_L=(\det L)^7$.

By contrast, the normalizer of $\langle A_0\rangle$ in
$\GL_2(\mathbb Q)$ is
$\mathbb Q[A_0]^\times\rtimes\langle J_1\rangle$, where
\[
 J_1=\begin{pmatrix}1&0\\1&-1\end{pmatrix},\qquad
 J_1A_0J_1^{-1}=A_0^2.
\]
Indeed, commuting matrices are $pI+qA_0$, of determinant
$p^2-pq+q^2$, and every reverser is a commuting matrix times $J_1$.
For $L=\operatorname{diag}(1,2)$,
\[
 LA_0L^{-1}=\begin{pmatrix}0&-1/2\\2&-1\end{pmatrix}
\]
is neither $A_0$ nor $A_0^2$. Nevertheless, $\eta_L$ induces an
automorphism of the full group. It fixes the frame, so uniqueness
prevents absorbing it into a different adjoint factor.
Here $T_L$ acts on the multiplicity space, not necessarily by
restriction of the ambient coefficient action.
\end{example}

\begin{example}[The fixed ring]\label{ex:no-etale-descent}
Let $R=\mathbb F_7[\varepsilon]/(\varepsilon^2)$ and
$\rho(\varepsilon)=2\varepsilon$.
Then $\rho$ has order three and trivial residue action, but
$R^\rho=\mathbb F_7$ and $\dim_{\mathbb F_7}R=2$.
Thus $R/R^\rho$ is not cubic \'etale, although all local results
above apply. Unlike Example~\ref{ex:squarezero}, every ring
automorphism has the form $\varepsilon\mapsto c\varepsilon$,
$c\in\mathbb F_7^\times$, and commutes with $\rho$.
\end{example}

\begin{example}[A point without a fixed lift]\label{ex:torus}
Take $R=\mathbb C[z]_{(z)}$ and $\rho(f(z))=f(\omega z)$, where
$\omega^3=1$ and $\omega\ne1$.
The point $h=t(1+z,1)\in H_Q$ reduces to the identity.
A lift in $G_P(R)$ would lie in $H_P$, since the inverse image
of the absolute torus is the Spin torus.
Formula~\eqref{eq:forms-torus-isogeny} would give
$u=r^2/s$ and $v=s^2/N(r)$ for $u=1+z$, $v=1$, with
$r\in R^\times$ and $s\in R_0^\times$.
Since $N(u)=N(r)^2/s^3$, this forces
\[
 r^2=uN(u)v^2=(1+z)(1+z^3)=(1+z)^2(z^2-z+1).
\]
The quadratic has two simple complex zeros, neither equal to $-1$.
The right-hand side has odd valuations there, so is not a square
in $\mathbb C(z)$ and cannot equal $r^2$.
Thus $h$ has no fixed lift and, since
$\pi(G_P(R))=E_Q(R)$, one has $h\notin E_Q(R)$.
The canonical adjoint action is therefore needed in the $P$
classification even for points congruent to the identity.
\end{example}

\subsection{The additional actions in the square-zero example}
\label{subsec:squarezero-proof}

We construct the actions in Example~\ref{ex:squarezero} directly,
without using the coefficient converse.
All tensor products in this subsection are over $\mathbb Q$.
The constant splitting of the square-zero extension gives
\[
 \mathbf G_Q(R)\simeq
 (\mathfrak g\otimes V)\rtimes\mathbf G_Q(\mathbb Q),
 \qquad \mathfrak g=\mathfrak{so}_8(\mathbb Q,J_8).
\]
The kernel is additive because $V^2=0$, and constants act by the
adjoint representation. Taking fixed points under $\tau\rho$ gives
\[
 G_Q(R)\simeq V_{14}\rtimes G_2(\mathbb Q),\qquad
 V_{14}=(\mathfrak g\otimes V)^{\tau_*\otimes A_0}.
\]
There are six length-three root-space orbits and one outer-coroot
orbit, each contributing a copy of $V$.
Fixed root and Cartan directions contribute zero because $A_0-I$
is invertible. Thus $\dim V_{14}=14$, with weights
$d,c,b,0,-b,-c,-d$, each of multiplicity two.

To identify the multiplicity coordinates, put
\[
 E_b=M_{\alpha_4}+M_{\alpha_1}+M_{\alpha_3},\qquad
 E_a=M_{\alpha_2},\qquad F_b=E_b^t,\qquad F_a=E_a^t.
\]
These act by Lie brackets. Define
\[
 w_d(v)=M_{e_2+e_3}\otimes A_0v
       +M_{e_1+e_4}\otimes A_0^2v+M_{e_1-e_4}\otimes v,
\]
and, retaining the argument $v$, set
\[
 \begin{gathered}
 w_c=F_bw_d,\qquad w_b=F_aw_c,\qquad w_0=F_bw_b,\\
 w_{-b}=\tfrac12F_bw_0,\qquad
 w_{-c}=F_aw_{-b},\qquad w_{-d}=F_bw_{-c}.
 \end{gathered}
\]
Direct brackets, using $A_0^2+A_0+I=0$, give
\[
 \begin{array}{c|rrrrrrr}
 &w_d&w_c&w_b&w_0&w_{-b}&w_{-c}&w_{-d}\\\hline
 E_b&0&w_d&0&2w_b&w_0&0&w_{-c}\\
 F_b&w_c&0&w_0&2w_{-b}&0&w_{-d}&0\\
 E_a&0&0&w_c&0&0&w_{-b}&0\\
 F_a&0&w_b&0&0&w_{-c}&0&0
 \end{array}
\]
and
\[
 w_0(v)=H_{\alpha_4}\otimes v
       +H_{\alpha_1}\otimes A_0v+H_{\alpha_3}\otimes A_0^2v,
 \qquad H_\alpha=[M_\alpha,M_{-\alpha}].
\]
Each $w_\lambda:V\to V_{14}$ is injective, as a free absolute
coefficient shows. Their distinct weights make the images independent,
so together they give all of $V_{14}$.

The simple signed root subgroups act by the finite exponentials
of these operators, whose nilpotence indices are three for short
roots and two for long roots.
Thus $\Xi(e_\lambda\otimes v)=w_\lambda(v)$ identifies
$V_{14}$ with $W_7\otimes V$, with the constant group acting only
on $W_7$.
The torus point $t(2,3)$ has distinct eigenvalues
$12,6,2,1,1/2,1/6,1/12$ on the seven weights.
Its spectral projections and the connected nonzero arrows in the
table prove irreducibility of $W_7$.
They also force a commuting endomorphism of $V_{14}$ to have
the same block $L\in\operatorname{End}(V)$ on every weight:
\[
 \operatorname{End}_{G_2(\mathbb Q)}(V_{14})=M_2(\mathbb Q),
 \qquad
 T_L=\Xi(\id_{W_7}\otimes L)\Xi^{-1},\qquad
 \det T_L=(\det L)^7.
\]

Let $\iota:V_{14}\to G_Q(R)$ be the additive kernel inclusion,
and write $y_\gamma(v)=\iota(r_\gamma(v))$ for radical short
parameters. Linearizing the root products gives
\[
 \begin{array}{c|rrrrrr}
 \gamma&b&-b&c&-c&d&-d\\\hline
 r_\gamma(v)&-w_b(v)&w_{-b}(v)&-w_c(v)&-w_{-c}(v)&w_d(v)&-w_{-d}(v).
 \end{array}
\]
Every nonzero weight space is therefore elementary.
The zero-weight space is supplied by the exact word
\[
 \iota(w_0(v))
 =y_{-b}(1)y_b(-v)y_{-b}(-1)y_b(v)y_{-b}(-v),
\]
which follows from
$\operatorname{Ad}(y_{-b}(1))w_b(v)=w_b(v)+w_0(v)+w_{-b}(v)$.
Constants are elementary as well, so $E_Q(R)=G_Q(R)$.

For every $L\in M_2(\mathbb Q)$, equivariance makes
$\Psi_L(Z,g)=(T_LZ,g)$ a group endomorphism.
The preceding table and root additivity give
\[
 \Psi_L(y_\gamma(q+v))=y_\gamma(q+Lv)
 \qquad(\gamma\text{ short},\ q\in\mathbb Q,\ v\in V);
\]
long-root parameters are rational and fixed.
The map is invertible exactly when $L$ is, and then agrees with
$\phi_{\eta_L}$ and $\Phi_{\eta_L}$.

The action is on the multiplicity decomposition, not in general
the restriction of $\id_{\mathfrak g}\otimes L$:
on $w_d(v)$ the latter gives coefficients
$LA_0v,LA_0^2v,Lv$, whereas $T_Lw_d(v)=w_d(Lv)$ has coefficients
$A_0Lv,A_0^2Lv,Lv$.
Thus the constructed group action requires no commutation of
$L$ with $A_0$.

%------------------
%------------------
%------------------

\medskip

\appendix
\section{Finite normalization data}\label{app:certificate}
\label{app:d4-section3-reproducibility}

We specify the matrices and selected equations used in
Section~\ref{sec:normalization}, retaining its local notation
$R$ and $J=\Jac(R)$. All coefficients are integral, and the linear
parametrizations require only $2$ invertible.

\subsection{Matrices and retained words}
\label{app:certificate-forms}\label{app:certificate-words}

Write $s=N_s$, $l=N_l$, $W_s=lslsl$, and $W_l=slsls$.
Since $W_s^2=H_s$ and $W_l^2=H_l$, the stabilizer equations
$X_sW_s=W_sX_s$ and $X_lW_l=W_lX_l$ first separate the sign
spaces and then identify entries up to sign. Their full solutions are
\[
 X_s=
 \begin{pmatrix}
 s_{16}&-s_{15}&0&0&s_{14}&-s_{13}&0&0\\
 -s_{12}&s_{11}&0&0&-s_{10}&s_9&0&0\\
 0&0&s_1&s_2&0&0&s_3&s_4\\
 0&0&s_5&s_6&0&0&s_7&s_8\\
 s_9&s_{10}&0&0&s_{11}&s_{12}&0&0\\
 s_{13}&s_{14}&0&0&s_{15}&s_{16}&0&0\\
 0&0&s_{17}&s_{18}&0&0&s_{19}&s_{20}\\
 0&0&s_{21}&s_{22}&0&0&s_{23}&s_{24}
 \end{pmatrix}
\]
and
\[
 X_l=
 \begin{pmatrix}
 l_3&0&0&l_4&l_1&0&0&l_2\\
 0&l_{12}&-l_{11}&0&0&l_{10}&-l_9&0\\
 0&-l_8&l_7&0&0&-l_6&l_5&0\\
 l_{15}&0&0&l_{16}&l_{13}&0&0&l_{14}\\
 l_1&0&0&l_2&l_3&0&0&l_4\\
 0&l_5&l_6&0&0&l_7&l_8&0\\
 0&l_9&l_{10}&0&0&l_{11}&l_{12}&0\\
 l_{13}&0&0&l_{14}&l_{15}&0&0&l_{16}
 \end{pmatrix}.
\]

The standard values are $U_s,U_l$. The condition
$X_l\equiv U_l\pmod J$ means
$l_3,l_7,l_{12},l_{16}\in1+J$, $l_{11}\in-1+J$, and
$l_i\in J$ for the remaining indices.
The normalization uses the following Weyl words:
\[
 \begin{array}{c|ccc}
 &R_s[3]&R_s[5]&R_s[8]\\\hline
 \text{word}&l^{-1}X_sl&H_lX_sH_l&slX_s(sl)^{-1}\\
 \text{standard value}&y_c(-1)&y_b(-1)&y_d(-1)
 \end{array}
\]
\[
 \begin{array}{c|ccc}
 &R_l[3]&R_l[5]&R_l[8]\\\hline
 \text{word}&sX_ls^{-1}&lsX_l(ls)^{-1}&H_sX_lH_s\\
 \text{standard value}&y_e(-1)&y_f(-1)&y_a(-1).
 \end{array}
\]
Besides the stabilizer equations, retain
\[
 X_sH_lX_s=H_l,\quad (X_sN_s)^3=I_8,\qquad
 X_lH_sX_l=H_s,\quad (X_lN_l)^3=I_8,
\]
\[
 X_lR_l[5]=R_l[5]X_l,\qquad
 [X_s,X_l]_L=R_s[3]R_s[8]R_l[3]R_l[5]^2.
\]
The inversion equations give $R_s[5]=X_s^{-1}$ and
$R_l[8]=X_l^{-1}$. Thus the remaining relations are the
vanishing of the polynomial matrices
\[
 \begin{aligned}
 \mathcal I_s&=X_sH_lX_s-H_l,
 &\mathcal K_s&=(X_sN_s)^3-I_8,\\
 \mathcal I_l&=X_lH_sX_l-H_s,
 &\mathcal K_l&=(X_lN_l)^3-I_8,\\
 \mathcal C&=X_lR_l[5]-R_l[5]X_l,\\
 \mathcal G&=X_sX_lR_s[5]R_l[8]
             -R_s[3]R_s[8]R_l[3]R_l[5]^2.
 \end{aligned}
\]
The standard mixed identity is
\[
 [y_b(1),y_a(1)]_L=y_c(-1)y_d(-1)y_e(-1)y_f(-2).
\]
These integral root-word identities are transported exactly by
Lemma~\ref{lem:tits-coherence}.
All matrix components below are indexed by $1,\ldots,8$ in the
original ordered basis.

\subsection{The long-root calculation}
\label{app:certificate-long}\label{subsec:d4-long-root-exhaustion}

The following fifteen entries, with the displayed signs, define
the scalar equations used in the elimination:
\[
 \begin{array}{c|c@{\qquad}c|c}
 \toprule
 \text{polynomial}&\text{matrix component}
 &\text{polynomial}&\text{matrix component}\\
 \midrule
 C_{15}&(\mathcal C)_{2,4}
 &I_1&(\mathcal I_l)_{1,1}\\
 C_{25}&(\mathcal C)_{2,8}
 &I_2&-(\mathcal I_l)_{4,4}\\
 C_{19}&-(\mathcal C)_{1,3}
 &I_{14}&(\mathcal I_l)_{2,6}\\
 C_{17}&(\mathcal C)_{3,5}
 &I_{11}&-(\mathcal I_l)_{2,7}\\
 C_{31}&-(\mathcal C)_{4,6}
 &I_{15}&-(\mathcal I_l)_{2,3}\\
 C_{32}&-(\mathcal C)_{8,6}
 &I_3&-(\mathcal I_l)_{3,3}\\
 I_8&-(\mathcal I_l)_{1,5}
 &K_{11}&-(\mathcal K_l)_{2,3}\\
 I_{10}&-(\mathcal I_l)_{4,8}
 &&\\
 \bottomrule
 \end{array}
\]

\begin{proposition}\label{prop:d4-long-root-form}
The near-standard solutions of the long-root relations are precisely
the matrices $X_l(p,z)$ with
$p\in1+J$, $z\in J$, and $p^2-p-z^2=0$, where
\[
 X_l(p,z)=
 \begin{pmatrix}
 1&0&0&0&0&0&0&0\\
 0&1&p&0&0&0&z&0\\
 0&p-1&1&0&0&z&0&0\\
 0&0&0&1&0&0&0&0\\
 0&0&0&0&1&0&0&0\\
 0&0&-z&0&0&1&1-p&0\\
 0&-z&0&0&0&-p&1&0\\
 0&0&0&0&0&0&0&1
 \end{pmatrix}.
\]
\end{proposition}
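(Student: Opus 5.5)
The argument has two directions: first show that every $X_l(p,z)$ with the stated constraints is a near-standard solution, and then show that every near-standard solution has this form. Both directions stay inside the $16$-coordinate stabilizer parametrization of $X_l$ from Appendix~\ref{app:certificate-forms}, and use only the fifteen selected components listed there.

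\emph{Step 1: sufficiency.} Substitute $X_l(p,z)$ into $\mathcal I_l$, $\mathcal K_l$ and $\mathcal C$ and reduce modulo $p^2-p-z^2$. A faster route is to use the identity $X_l(p,z)C_l=C_lU_l$ with $C_l=A(-z,p,0,1)$, which is a direct check on the two $2\times2$ blocks coupling $V_2$ and $V_3$. Since $C_l$ centralizes $\mathcal T$, every Tits word $R_l[\cdot]$ and every relation among $\mathcal T$ and $X_l$ is conjugated from the corresponding standard relation for $U_l$. Moreover $C_l$ is invertible, because $p^2-z^2=p$ is a unit. This is the place where $p^2-p-z^2=0$ enters. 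Finally, $p\in1+J$ and $z\in J$ give $X_l\equiv U_l$.

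\emph{Step 2: elimination for necessity.} Start from the near-standard congruences, namely $l_3,l_7,l_{12},l_{16}\in1+J$, $l_{11}\in-1+J$, and the other $l_i\in J$.
\begin{itemize}
\item The commutation equations $C_{15},C_{25},C_{19},C_{17},C_{31},C_{32}$ with the Tits word $R_l[5]$ are linear in the $l_i$. I expect them to kill the coordinates coupling $V_1\oplus V_4$ to themselves off the identity, i.e.\ $l_1,l_2,l_4,l_{13},l_{14},l_{15}$, up to unit multiples of one another.
\item The inversion equations $I_1,I_2,I_8,I_{10}$ are quadratic of the form $l_3^2+l_1^2-1$ and $2l_1l_3$, plus cross terms. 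Cancelling $2$ and using that $l_3$ is a unit gives $l_1=0$, so $l_3^2=1$, and then $l_3=1$ by the congruence, exactly as in the proof of Lemma~\ref{lem:tits-coherence}. The same argument gives $l_{16}=1$ and kills the remaining $V_1,V_4$ entries.
\item On the $V_2\oplus V_3$ block, the equations $I_3,I_{11},I_{14},I_{15}$ and the cubic component $K_{11}$ should force $l_7=l_{12}=1$, $l_5=l_8=0$ or similar, and express everything through two parameters. These are $p=-l_{11}$ and $z$, the $(2,7)$ entry, subject to one residual quadratic which must be $p^2-p-z^2$.
\end{itemize}

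\emph{Main obstacle.} The hard step is the $V_2\oplus V_3$ block. The equations there are genuinely nonlinear, and one must divide only by quantities congruent to units. At each pivot I will solve for the variable whose coefficient reduces to $\pm1$ or $\pm2$ at the standard point, which is legitimate by locality. I then substitute back into the remaining equations. Finally, I check that no extra constraint beyond $p^2-p-z^2=0$ survives, by comparing with the solution family from Step 1.
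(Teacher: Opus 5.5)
Your sufficiency direction matches the paper's: conjugation by $C_l=A(-z,p,0,1)$, which centralizes $\mathcal T$. One attribution is off, though. The relation $p^2-p-z^2=0$ is needed for the intertwining identity $X_l(p,z)C_l=C_lU_l$ itself; for example, the $(3,2)$ entry of $X_l(p,z)C_l$ is $p^2-p-z^2$, while that of $C_lU_l$ is $0$. Invertibility does not need it: $\det C_l=(p^2-z^2)^3\equiv1\pmod J$ for every $p\in1+J$, $z\in J$.

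The necessity direction is the actual content of the proposition, and here your roadmap has a genuine gap: as laid out, it would not close.

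\emph{Linearity.} $\mathcal C=X_lR_l[5]-R_l[5]X_l$ is not linear in the $l_i$. The word $R_l[5]=lsX_l(ls)^{-1}$ is a Weyl conjugate of $X_l$, not a fixed Tits element. For instance, $C_{15},C_{25}$ read $\bigl(\begin{smallmatrix}l_{11}&l_9\\ l_9&l_{11}\end{smallmatrix}\bigr)(l_2,l_4)^t=0$. This is linear only in $(l_2,l_4)$, with a coefficient matrix of determinant $\equiv1$.

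\emph{The $V_2\oplus V_3$ block.} This is the more serious problem. You assign all six commutation components to the $V_1\oplus V_4$ block and plan to treat $V_2\oplus V_3$ with $I_3,I_{11},I_{14},I_{15},K_{11}$ alone. Since $X_l$, $H_s$, $N_l$ all preserve both blocks, those five equations involve only the eight unknowns $l_5,\dots,l_{12}$. Five equations in eight unknowns cannot cut the solutions down to the one-parameter family $X_l(p,z)$: over a field, every component through the standard point has dimension at least three.

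\emph{The missing idea.} $R_l[5]$ is close to the root element at $e_1+e_2$, so it couples $V_1$ with $V_2$. Consequently two commutation components constrain the $V_2\oplus V_3$ block:
\begin{itemize}
\item $C_{19}$ gives $l_6=l_9$, using that $l_{11}$ is a unit.
\item $C_{17}$ gives $l_8l_{11}=l_6^2$.
\end{itemize}
The second is exactly the residual quadratic. It reduces $I_3$ to $1-l_7^2$ and $K_{11}$ to $1-(l_8-l_{11})^2$. After $l_8-l_{11}=1$, $p=-l_{11}$, $z=-l_6$, it becomes $p^2-p-z^2=0$.

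In particular, $l_8=1-p$, not $0$. Only $C_{15},C_{25}$ and $C_{31},C_{32}$ act on $V_1\oplus V_4$, killing $l_2,l_4$ and $l_{13},l_{15}$. Then $l_1,l_{14}$ come from $I_8,I_{10}$ once those cross terms vanish, as your second bullet correctly anticipates.
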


\begin{proof}
After cancellation of $2$, equations $C_{15}=C_{25}=0$ give
\[
 \begin{pmatrix}l_{11}&l_9\\l_9&l_{11}\end{pmatrix}
 \begin{pmatrix}l_2\\l_4\end{pmatrix}=0.
\]
The determinant is $1$ modulo $J$, so $l_2=l_4=0$.
Since $l_{11}$ is a unit, $C_{19}=0$ gives $l_6=l_9$;
then $C_{17}=0$ gives
\begin{equation}
 l_8l_{11}=l_6^2.
 \label{eq:long-elimination-product}
\end{equation}
Equations $C_{31}=C_{32}=0$ now give
\[
 \begin{pmatrix}l_6&-l_{11}\\-l_{11}&l_6\end{pmatrix}
 \begin{pmatrix}l_{13}\\l_{15}\end{pmatrix}=0.
\]
Its determinant is $-1$ modulo $J$, so $l_{13}=l_{15}=0$.

Next $I_8=0$ gives $2l_1l_3=0$, hence $l_1=0$.
Then $I_1=0$ gives $(1-l_3)(1+l_3)=0$, hence $l_3=1$.
Similarly, $I_{10}=I_2=0$ give $l_{14}=0$ and $l_{16}=1$.
Equation $I_{14}=0$ becomes $-l_{10}(l_7+l_{12})=0$,
so $l_{10}=0$, since the second factor is $2$ modulo $J$.
Now $I_{11}=0$ gives $l_5=0$, and $I_{15}=0$ gives
$l_7=l_{12}$.
Using \eqref{eq:long-elimination-product}, equation $I_3=0$
reduces to $1-l_7^2=0$. Thus $l_7=l_{12}=1$.

Finally, $K_{11}=0$ reduces to
\[
 1+l_6^2-l_8^2+l_8l_{11}-l_{11}^2
 =1-(l_8-l_{11})^2=0.
\]
Since $1+l_8-l_{11}$ is a unit, $l_8-l_{11}=1$.
Set $p=-l_{11}$ and $z=-l_6$.
Equation~\eqref{eq:long-elimination-product} becomes
$p^2-p-z^2=0$, giving the displayed matrix.

Conversely, direct multiplication gives
\[
 X_l(p,z)A(-z,p,0,1)=A(-z,p,0,1)U_l.
\]
The correcting matrix is invertible because $p^2-z^2=p$ is a unit.
It centralizes the Tits group, so conjugation transports all
standard long-root relations to $X_l(p,z)$.
\end{proof}

\subsection{The short-root certificate}\label{app:certificate-short}

Set $X_l=U_l$ and let $\mathcal G^0$ be the corresponding
specialization of $\mathcal G$.
Select the following entries as $F_1,\ldots,F_{23}$:
\[
 \begin{array}{r|c@{\qquad}r|c}
 \toprule
 i&F_i&i&F_i\\
 \midrule
 1 &(\mathcal I_s)_{3,3}
 &13&-(\mathcal K_s)_{8,8}\\
 2 &-(\mathcal I_s)_{4,4}
 &14&-(\mathcal K_s)_{7,7}\\
 3 &(\mathcal I_s)_{7,7}
 &15&-(\mathcal K_s)_{2,2}\\
 4 &-(\mathcal I_s)_{8,8}
 &16&-(\mathcal K_s)_{3,8}\\
 5 &-(\mathcal I_s)_{1,1}
 &17&(\mathcal K_s)_{2,5}\\
 6 &(\mathcal I_s)_{2,2}
 &18&(\mathcal G^0)_{3,7}\\
 7 &-(\mathcal I_s)_{3,4}
 &19&(\mathcal G^0)_{3,6}\\
 8 &-(\mathcal I_s)_{7,3}
 &20&(\mathcal G^0)_{3,2}\\
 9 &-(\mathcal I_s)_{3,7}
 &21&(\mathcal G^0)_{7,8}\\
 10&-(\mathcal I_s)_{3,8}
 &22&(\mathcal G^0)_{2,4}\\
 11&-(\mathcal I_s)_{2,6}
 &23&(\mathcal G^0)_{8,6}\\
 12&-(\mathcal I_s)_{1,5}
 &&\\
 \bottomrule
 \end{array}
\]

\label{app:certificate-minor}
The standard tuple is
\[
 \mathbf s^0=(1,1,-1,1,0,1,-1,0,0,0,1,0,
              0,0,-1,1,0,0,1,0,0,0,-1,1).
\]
Each $F_i$ vanishes there.
With rows in the displayed order and columns
$(s_1,s_3,s_4,\ldots,s_{24})$, exact differentiation and
elimination give
\begin{equation}
 \left.
 \det\frac{\partial(F_1,\ldots,F_{23})}
          {\partial(s_1,s_3,s_4,\ldots,s_{24})}
 \right|_{\mathbf s^0}
 =+384=2^7\cdot3.
 \label{eq:certificate-determinant}
\end{equation}
This determinant is a unit over $\mathbb Z[1/6]$.
The matrices, retained words, and component lists fully specify
the calculation. 

The computational supplement~\cite{TrialitySupplement} contains
the expanded polynomials, the integer Jacobian, and a script
reconstructing the entries and checking the determinant.

%---------------------------
%---------------------------
%---------------------------

\medskip
\section{Extraction of root factors}\label{app:extraction}

Let $(R,\rho)$ be a local coefficient pair and $H\trianglelefteq E_R$.
We prove Lemma~\ref{lem:extraction}.
Write $a(t)=y_a(t)$, $a_-(t)=y_{-a}(t)$, and similarly for the
other roots; $U_\Sigma$ denotes the unipotent subgroup on a closed
root set $\Sigma$. All commutators are left commutators.
Weyl transport moves an extracted parameter to any root of the same
length, up to sign. To delete an internal factor, use normality:
if $PxQ,x\in H$, with $P,Q\in E_R$, then
\begin{equation}
 (Px^{-1}P^{-1})(PxQ)=PQ\in H.
 \label{eq:extraction-delete}
\end{equation}
The extraction patterns are related to
\cite[Lemma~9.1, Case~C, and Lemma~9.2, Case~D]{GM};
we give the required identities and normality arguments below.

\subsection{Positive factors}\label{app:extraction-three-four}

For $D=d(u)e(v)f(w)\in H$, the identities
\[
 [a(1),D]_L=f(v),\qquad [a_-(1),D]_L=e(w),
\]
followed by long-root transport and deletion, extract all three
factors.

Now let $x=c(t)d(u)e(v)f(w)\in H$ and put
\[
 z(r)=[a_-(r),x]_L,\qquad
 z(r,s)=[a(s),z(r)]_L,\qquad r,s\in R_0.
\]
The signed root relations give
\begin{align*}
 z(1)&=b(t)d(-t^\#)e(w+N(t))f(-N(t)),\\
 Z_-:=z(-1,1/2)
 &=c(-t/2)d(t^\#/2)e(-N(t)/2)f(-w/2+3N(t)/4),\\
 Z_+:=z(1,-1/2)
 &=c(-t/2)d(-t^\#/2)e(-N(t)/2)f(-w/2-3N(t)/4).
\end{align*}
These polynomial matrix identities hold over
$\mathbb Z[1/2][t_0,t_1,t_2,u_0,u_1,u_2,v,w]$ before specializing
the triples to the three coefficient slots; neither $u$ nor $v$
is set to zero. Adjacent cancellation of the $c$-factors gives
\begin{equation}
 Z_-^{-1}Z_+=d(-t^\#)f(-3N(t)/2)\in H.
 \label{eq:extraction-norm-difference}
\end{equation}
Three-factor extraction gives $d(t^\#)\in H$, and
\[
 [d(t^\#),b(-t/3)]_L=e(N(t))
\]
gives both long norm factors by Weyl transport. Hence
\[
 z(1)f(N(t))e(-N(t))d(t^\#)=b(t)e(w)\in H.
\]
Now $[f_-(1),b(t)e(w)]_L=a_-(-w)\in H$.
Transport and deletion extract $e(w)$, then $b(t)$ and $c(t)$.
Deleting $c(t)$ from $x$ leaves the three-factor case.
This proves extraction for $c,d,e,f$ in that order.

For the canonical positive word
\[
 z=b(t_1)a(t_2)c(t_3)d(t_4)e(t_5)f(t_6)\in H,
\]
one has
\[
 [a(1),z]_L
 =c(t_1)d(t_1^\#)e(N(t_1))f(t_5-N(t_1)).
\]
The four-factor result and Weyl transport extract $b(t_1)$.
Delete it and denote the remaining word by $z_0$.
Modulo the normal subgroup $U_{d,e,f}$ of the positive group,
the $c$-direction is central, so
\[
 [b(1),z_0]_L=c(-t_2)d(*)e(*)f(*).
\]
Four-factor extraction and transport give $d(t_2)\in H$.
Since $t_2\in R_0$,
\[
 [d(t_2),b(-1/3)]_L=e(t_2).
\]
Transport and deletion extract $a(t_2)$, leaving the four-factor
case.

For an arbitrary order of distinct positive roots, collect the word.
Every commutator correction has greater height than either input,
so the original factors of least height retain their parameters.
Canonical extraction places these factors in $H$; delete them by
\eqref{eq:extraction-delete} and repeat in increasing height.
This extracts the original factors without identifying changed
higher canonical coordinates with their original parameters.
\label{app:extraction-positive-orders}

\subsection{Reduction of a big-cell element}
\label{app:extraction-narrow-cell}

Let $z=a(t)XhY\in H$, where $t\in R_0$,
$X\in U_0=U_{b,c,d,e,f}$, $h\in T_Q(R)^{\tau\rho}$,
and $Y\in U^-$. Set
\[
 V=U_{d,e,f},\qquad
 V^-=U_{-a,-c,-d,-e,-f},\qquad
 W^-=U_{-a,-c,-d,-f}.
\]
The long rank-one group on $\pm a$ normalizes $U_0$ and $V$,
with $V\trianglelefteq U_0$ and $[U_0,U_0]\subseteq V$.
On the negative side,
\[
 [a_-(R_0),U^-]\subseteq U_{-c,-d,-e,-f},
\]
and $W^-\trianglelefteq V^-$ centralizes $a_-(R_0)$.
The possible commutator supports with $b$ are
\[
 \begin{array}{c|ccccc}
 \text{second root}&-a&-c&-d&-e&-f\\\hline
 \text{support}
 &\varnothing&-a&-c,-a,-f&-d,-c,-a,-f&\varnothing.
 \end{array}
\]
Thus $[b(R),V^-]\subseteq W^-$, also for products by normality.
These assertions follow from the root strings and signed pair
relations.

Put $P=a(t)X$, $A_-=a_-(1)$, $B=b(1)$, and
$u=[a(-t),A_-]_L$. Expanding gives
\[
 \begin{aligned}
 P^{-1}[A_-,z]_LP
 &=[P^{-1},A_-]_L[A_-,h]_Lh[A_-,Y]_Lh^{-1},\\
 [P^{-1},A_-]_L
 &=X^{-1}uX[X^{-1},A_-]_L.
 \end{aligned}
\]
Since $u$ normalizes $U_0$, the second line is $uX_1$ for some
$X_1\in U_0$. The other factors in the first line belong to $V^-$.
Hence $z_1=uX_1Y_1\in H$ for some $Y_1\in V^-$.
This uses only the torus action on roots, not normality of $H$
under $h$.

By normality,
\[
 z_2=(uX_1A_-)^{-1}[B,z_1]_L(uX_1A_-)\in H,
\]
and
\begin{equation}
 z_2=A_-^{-1}X_1^{-1}[u^{-1},B]_LX_1
              [X_1^{-1},B]_L[B,Y_1]_LA_-.
 \label{eq:extraction-second-commutator}
\end{equation}
The element $K=[u^{-1},B]_L$ lies in $U_0$, and
\[
 X_1^{-1}KX_1[X_1^{-1},B]_L\equiv K\pmod V.
\]
Conjugation by $A_-^{-1}$ preserves this congruence, while
$[B,Y_1]_L\in W^-$ commutes with $A_-$.
Thus $z_2$ has a factorization in $U_0W^-$ whose positive factor
has the same $b,c$ coordinates as
$A_-^{-1}[u^{-1},B]_LA_-$.
The exact identity
\[
 a_-(-1)[u^{-1},b(1)]_La_-(1)
 =b(-t)c(t^2)d(t^2+t^3)e(t^2+t^3+t^4)f(-t^4-2t^5)
\]
is integral in the fixed parameter $t$.
In particular, its $b$-parameter is $-t$. We have obtained
\begin{equation}
 r=b(-t)c(p)d(q_0)e(r_0)f(s)
       a_-(v_1)c_-(v_2)d_-(v_3)f_-(v_4)\in H,
 \label{eq:extraction-r}
\end{equation}
where $p,q_0,v_2,v_3\in R$ and $r_0,s,v_1,v_4\in R_0$.

\subsection{Removing the negative factors}

Put $w_\gamma=y_\gamma(1)y_{-\gamma}(-1)y_\gamma(1)$.
We use the following operation twice.
If $Z=Ld_-(u_0)f_-(v_0)M\in H$, where $L,M\in E_R$
commute with $e(1)$, then
\[
 L^{-1}[e(1),Z]_LL
 =[e(1),d_-(u_0)f_-(v_0)]_L\in H.
\]
The universal signed identity is
\begin{equation}
 w_f[e(1),d_-(u_0)f_-(v_0)]_Lw_f^{-1}
 =b(u_0)d(-u_0^\#)e(N(u_0)-v_0)f(-N(u_0)).
 \label{eq:extraction-cleanup}
\end{equation}
It follows by multiplying the coherent matrices with independent
coefficient slots for $u_0$.
Positive extraction gives $b(u_0)$ and both long factors in $H$.
Weyl transport gives $d_-(u_0)$ and $e(N(u_0))$; root additivity
then gives $e(v_0)$ and hence $f_-(v_0)$.
Thus both original factors are extracted without requiring
$L,M\in H$.

Apply this operation to \eqref{eq:extraction-r}, with
\[
 L=b(-t)c(p)d(q_0)e(r_0)f(s)a_-(v_1)c_-(v_2),\qquad M=1.
\]
Deleting the last two factors leaves
\[
 r_1=b(-t)c(p)d(q_0)e(r_0)f(s)a_-(v_1)c_-(v_2)\in H.
\]
Its signed Weyl conjugate is
\[
 w_fr_1w_f^{-1}
 =b(-t)d_-(-p)c_-(q_0)a_-(r_0)f_-(-s)e(-v_1)d(-v_2).
\]
Among these negative factors, $a_-$ commutes with the others
and $f_-$ is central. The only nontrivial interchange is
\[
 [d_-(-p),c_-(q_0)]_L=f_-(-T(pq_0)).
\]
Consequently,
\[
 w_fr_1w_f^{-1}
 =b(-t)a_-(r_0)c_-(q_0)d_-(-p)f_-(\zeta)e(-v_1)d(-v_2),
 \qquad \zeta=-s-T(pq_0).
\]
Apply the operation again with
\[
 L=b(-t)a_-(r_0)c_-(q_0),\qquad M=e(-v_1)d(-v_2).
\]
It extracts $d_-(-p)=w_fc(p)w_f^{-1}$, so $c(p)\in H$.
Delete this original factor from $r_1$ by
\eqref{eq:extraction-delete}, obtaining
\[
 r_2=b(-t)d(q_0)e(r_0)f(s)a_-(v_1)c_-(v_2)\in H.
\]
Finally, $W=w_aw_c$ sends the successive roots
$b,d,e,f,-a,-c$ to $d,c,f,a,e,b$, with signed identity
\begin{equation}
 Wr_2W^{-1}
 =d(-t)c(-q_0)f(r_0)a(s)e(-v_1)b(v_2)\in H.
 \label{eq:extraction-final-positive}
\end{equation}
All six roots are positive and distinct.
Arbitrary-order extraction gives the original factor $d(-t)\in H$.
Since $t\in R_0$,
\[
 [d(-t),b(1/3)]_L=e(t),\qquad
 N_se(t)N_s^{-1}=a(t).
\]
This proves the big-cell assertion.
Only the fixed units $2$ and $3$ were divided by;
if $t$ is a unit, Lemma~\ref{lem:unit-generation} gives $H=E_R$.

\medskip

%----------------
%-----------------
%------------------
\section*{Acknowledgements}

This paper grew out of the second author's master's thesis at
Bar-Ilan University, supervised by the first author.

\smallskip\noindent\emph{Use of computational and AI-assisted tools.}
AI-assisted tools were used to discuss proof arguments, revise the
exposition, and prepare verification code. Selected matrix identities
and polynomial calculations were checked by exact symbolic computation.

\end{document}